\def\R {\mathbb{R}}
\def\C {\mathcal{C}}
\def\D {\mathcal{D}}
\def\Z {\mathbb{Z}}
\def\eps{\varepsilon}
\def\dist{{\rm dist}}
\def\cat{\textrm{cat}}
\def\gen{\textrm{genus}}
\newcommand{\cM}{{\mathcal M}}
\newcommand{\cP}{{\mathcal P}}
\newcommand{\loc}{\mathrm{loc}}
\newcommand{\wc}{\rightharpoonup}
\newcommand{\pa}{\partial}
\newcommand{\mf}[1]{\mathbf{#1}}
\DeclareMathOperator{\supp}{supp}
\DeclareMathOperator{\rad}{rad}
\newtheorem{proposition}{Proposition}[section]
\newtheorem{theorem}[proposition]{Theorem}
\newtheorem*{theorem*}{Theorem}
\newtheorem{lemma}[proposition]{Lemma}
\theoremstyle{definition}
\newtheorem{remark}[proposition]{Remark}
\numberwithin{equation}{section}
\title[Natural constraints for normalized solutions]{A natural constraint approach to normalized solutions of nonlinear Schr\"odinger equations and systems}
\author{Thomas Bartsch and Nicola Soave}
\address{
\hbox{\parbox{5.7in}{\medskip\noindent
Thomas Bartsch\\
Mathematisches Institut, Justus-Liebig-Universit\"at Giessen, \\
Arndtstrasse 2, 35392 Giessen (Germany),\\[2pt]
{\em{E-mail address: }}{\tt Thomas.Bartsch@math.uni-giessen.de.} \\ [5pt]
Nicola Soave\\
Mathematisches Institut, Justus-Liebig-Universit\"at Giessen, \\
Arndtstrasse 2, 35392 Giessen (Germany),\\[2pt]
{\em{E-mail address: }}{\tt nicola.soave@gmail.com, Nicola.Soave@math.uni-giessen.de.}}}}
\keywords{Elliptic systems, Schr\"odinger systems, Natural constraint, min-max methods.}
\thanks{\em{Acknowledgements:} We thank Prof. Louis Jeanjean and Prof. Susanna Terracini for a careful reading of the manuscript and for several precious suggestions. \\
Nicola Soave is partially supported through the project ERC Advanced Grant 2013 n. 339958 ``Complex Patterns for Strongly Interacting Dynamical Systems - COMPAT''.}
\begin{document}

\begin{abstract}
The paper deals with the existence of normalized solutions to the system
\[
\begin{cases}
-\Delta u - \lambda_1 u = \mu_1 u^3+ \beta u v^2 & \text{in $\R^3$} \\
-\Delta v- \lambda_2 v = \mu_2 v^3 +\beta u^2 v & \text{in $\R^3$}\\
\int_{\R^3} u^2 = a_1^2 \quad \text{and} \quad \int_{\R^3} v^2 = a_2^2
\end{cases}
\]
for any $\mu_1,\mu_2,a_1,a_2>0$ and $\beta<0$ prescribed. We present a new approach that is based on the introduction of a natural constraint associated to the problem. We also show that, as $\beta\to-\infty$, phase separation occurs for the solutions that we find.

Our method can be adapted to scalar nonlinear Schr\"odinger equations with normalization constraint, and leads to alternative and simplified proofs to some results already available in the literature.
\end{abstract}

\maketitle

\section{Introduction}

Various physical phenomena, such as the occurrence of phase-separation in Bose-Einstein condensates with multiple states, or the propagation of mutually incoherent wave packets in nonlinear optics, are modeled by the system of coupled nonlinear Schr\"odinger equations
\begin{equation}\label{syst schrod}
\begin{cases}
- \iota \pa_t \Phi_1 = \Delta \Phi_1 + \mu_1 |\Phi_1|^{2} \Phi_1+ \beta
 |\Phi_2|^{2} \Phi_1 \\
- \iota \pa_t \Phi_2 = \Delta \Phi_2 + \mu_2 |\Phi_2|^2 \Phi_2+ \beta |\Phi_1|^{2}\Phi_2
\end{cases} \quad t,x \in \R \times \R^N,
\end{equation}
see e.g.\ \cite{AkAn, Esry, Fra, Mal, Timm}. In the models, $\Phi_i$ is the wave function of the $i$-th component, the dimension of the ambient space is $N \le 3$, and the real parameters $\mu_i$ and $\beta$ represent the intra-spaces and inter-species scattering length, describing respectively the interaction between particles of the same component or of different components. In particular, the positive sign of $\mu_i$ (and of $\beta$) stays for attractive interaction, while the negative sign stays for repulsive interaction.

A fundamental step in the comprehension of the dynamics of the system consists in studying the possible existence and properties of solitary waves, solutions to \eqref{syst schrod} of type $\Phi_i(t,x) = e^{-i \lambda_i t} u_i(x)$, with $\lambda_i \in \R$ and $u_i:\R^N \to \R$. This ansatz leads to the following elliptic system for the densities $u_1$ and $u_2$:
\begin{subequations}\label{complete problem}
\begin{equation}\label{system}
\begin{cases}
-\Delta u_1- \lambda_1 u_1 = \mu_1 u_1^3+ \beta  u_2^{2} u_1 \\
-\Delta u_2- \lambda_2 u_2 = \mu_2 u_2^3 +\beta u_1^{2} u_2
\end{cases} \text{in $\R^N$}.
\end{equation}
This paper concerns the existence of \emph{normalized solutions} to \eqref{system} in dimension $N=3$, i.e. the existence of real numbers $(\lambda_1,\lambda_2) \in \R^2$ and of functions $(u_1,u_2) \in H^1(\R^3,\R^2)$ satisfying \eqref{system} together with the normalization condition
\begin{equation}\label{normalization}
\int_{\R^3} u_1^2 = a_1^2 \quad \text{and} \quad \int_{\R^3} u_2^2 = a_2^2,
\end{equation}
\end{subequations}
for a-priori given $a_1$, $a_2>0$, $\mu_1,\mu_2$, $\beta \in \R$. In what follows we refer to a solution of \eqref{system}-\eqref{normalization} simply as to a solution to \eqref{complete problem}.
We emphasize that, prescribing the \emph{masses} $a_i$ from the beginning, the \emph{frequencies} $\lambda_i$ are included in the unknown.
A somehow dual approach consists in fixing the frequencies $\lambda_i$ from the beginning, and leave the masses free.

Normalized solutions are particularly interesting from a physical point of view, since the mass $\|\Phi_i(t,\cdot)\|_{L^2}= \|u_i\|_{L^2}$ has often a clear physical meaning. In the aforementioned contexts, it represents the number of particles of each component in Bose-Einstein condensates, or the power supply in the nonlinear optics framework.
But despite this physical relevance, most of the papers deal with the problem with fixed frequencies, see e.g. \cite{AmCo, BaWa, ChZo, LinWei, LiuWang, MaiaMontefuscoPellacci, Mand, SaWa, Sir, So, SoTa, TerVer, WeiWeth} and the references therein, while problem \eqref{complete problem} is far from being well understood.

In order to clarify the difficulties that one has to face when searching for normalized solutions, in what follows we introduce some notation and review the few known results regarding \eqref{complete problem}.

Let $\mu_i=: \beta_{ii}$, $\beta=: \beta_{12}=\beta_{21}$, and for any $a>0$ let us consider
\begin{equation}\label{def sphere}
S_a:=\left\{ u \in H^1(\R^3): \int_{\R^3} u^2 = a^2 \right\}.
\end{equation}
Solutions to \eqref{complete problem} are critical points of the \emph{energy functional}
\begin{equation}\label{def energy}
J(u_1,u_2) = \int_{\R^3} \frac12 \sum_{i=1}^2 |\nabla u_i|^2 - \frac14 \sum_{i,j=1}^2 \beta_{ij}  u_i^2 u_j^2,
\end{equation}
on the constraint $S_{a_1} \times S_{a_2}$ with $(\lambda_1,\lambda_2)$ Lagrange multipliers. We are interested in \emph{positive} normalized solutions, i.e.\ normalized solutions with $u_1, u_2 >0$ in $\R^N$. Concerning the terminology, we often identify a solution $(\lambda_1,\lambda_2,u_1,u_2)$ of \eqref{complete problem} with its last components $(u_1,u_2)$, with some abuse of notation. This is justified by the fact that we obtain $(u_1,u_2)$ as critical points of the above constrained functional and $(\lambda_1,\lambda_2)$ are determined as Lagrange multipliers.

Some papers concern the existence of positive normalized solution when $\R^N$ is replaced by a bounded domain $\Omega$, or when a trapping potential is included in the equation; we refer to \cite{NoTaVe}, where essentially no assumption is imposed on $\mu_1$, $\mu_2$, $\beta$, but where the masses $a_1$ and $a_2$ are supposed to be small, and to \cite{NoTaTeVe, TaTe}, which regard the \emph{defocusing-repulsing} case $\mu_1,\mu_2,\beta<0$ with equal masses $a_1=a_2=1$. Notice that, if $\mu_1,\mu_2,\beta<0$ and $\Omega$ is bounded, the existence of a single normalized solution can be proved quite easily by minimization arguments, and indeed in \cite{NoTaTeVe, TaTe} the authors are mainly interested in multiplicity results and occurrence of phase-separation.

Let us consider now the \emph{focusing} case $\mu_1,\mu_2>0$ in the whole space $\R^N$. When \eqref{complete problem} is considered in dimension $N=1$, the constrained functional is bounded from below, and for arbitrary $a_i, \mu_i,\beta>0$ a positive normalized solution can be found minimizing $J|_{S_{a_1} \times S_{a_2}}$ and using concentration-compactness arguments. This approach, used in \cite{NguWan} (see also \cite[Section 5]{Caoetal}), fails if $N=2,3$, since $J|_{S_{a_1} \times S_{a_2}}$ is unbounded both from above and from below.
Thus, in higher dimensions one is induced to apply minimax methods, as successfully done in \cite{BaJeSo}. In the paper \cite{BaJeSo} we considered the \emph{attractive} case $\beta>0$ in $\R^3$ (the $2$-dimensional case is particularly delicate, see the forthcoming Remark \ref{rem: L^2 critical}). We proved that, for arbitrary masses $a_i$ and parameters $\mu_i$, there exist $\bar \beta_2> \bar \beta_1>0$ (depending on the data) such that for both $0 < \beta <\bar \beta_1$ and $\beta>\bar \beta_2$ system \eqref{complete problem} has a positive radial solution; in case $\beta>\bar \beta_2$ this solution is of mountain pass type, while for $0<\beta<\bar \beta_1$ the solution is obtained with a $2$-dimensional linking. This is somehow reminiscent to what happens for the unconstrained problem with fixed frequencies \cite{AmCo, Sir}. But despite the similarity between the results in \cite{BaJeSo} and those in \cite{AmCo, Sir}, the proofs differ substantially: the approach in \cite{AmCo, Sir} is indeed based on the research of critical points for the \emph{action functional}
\[
\mathcal{A}(u_1,u_2) := J(u_1,u_2) - \sum_{i=1}^2 \frac{\lambda_i}{2} \int_{\R^3} u_i^2
\]
constrained on Nehari-type sets associated to the problem, while apparently no Nehari manifold is available in the framework of normalized solutions because $\lambda_1$ and $\lambda_2$ are part of the unknown, and $(u_1,u_2)$ cannot be used as variation for $J|_{S_{a_1} \times S_{a_2}}$ in $(u_1,u_2)$. Further difficulties in dealing with the normalization constraint are that the existence of \emph{bounded} Palais-Smale sequences requires new arguments (the classical method used to prove the boundedness of any Palais-Smale sequence for unconstrained Sobolev-subcritical problem does not work), that Lagrange multipliers have to be controlled, and that weak limits of Palais-Smale sequences do not necessarily lie on $S_{a_1} \times S_{a_2}$. For all these reasons, the proofs in \cite{BaJeSo} are quite delicate and cannot be directly extended to cover the case $\beta<0$. The existence of normalized solutions for the focusing-repulsive case $\mu_i>0$, $\beta<0$ was then, up to now, completely open. This is the object of our first main result.


\begin{theorem}\label{thm: beta<0}
Let $N=3$, and let $\mu_1,\mu_2,a_1,a_2>0$ and $\beta<0$ be fixed. Then \eqref{complete problem} has a solution $(\lambda_1,\lambda_2,\bar u_1,\bar u_2)$ with $\lambda_i<0$, and $\bar u_i$ is positive in $\R^3$ and radially symmetric.
 \end{theorem}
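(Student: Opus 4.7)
The plan is to minimize $J$ on a Pohozaev-type natural constraint. Define
\[
P(u_1,u_2) \;:=\; \sum_{i=1}^2\|\nabla u_i\|_2^2 \;-\; \tfrac{3}{4}\sum_{i,j=1}^2 \beta_{ij}\int_{\R^3}u_i^2u_j^2,
\]
which vanishes on every $H^1$-solution of \eqref{complete problem} (combine the Derrick-Pohozaev identity in $\R^3$ with the equations tested against $u_i$). Put $\cM:=\{(u_1,u_2)\in S_{a_1}\times S_{a_2}:P(u_1,u_2)=0\}$ and let $\cM_r$ be its radial subset. I would study $\cM$ via the $L^2$-preserving dilation $u_s(x):=(s^{3/2}u_1(sx),s^{3/2}u_2(sx))$, for which
\[
J(u_s)=\tfrac{s^2}{2}A(u)-\tfrac{s^3}{4}B(u),\qquad P(u_s)=s^2A(u)-\tfrac{3s^3}{4}B(u),
\]
with $A(u):=\sum_i\|\nabla u_i\|_2^2$ and $B(u):=\sum_{i,j}\beta_{ij}\int u_i^2u_j^2$. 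On $\cM$, $P=0$ forces $B=4A/3>0$, so the scaling direction --- tangent to $S_{a_1}\times S_{a_2}$ --- is transverse to $\cM$, and $\cM$ is a smooth codimension-one submanifold. Testing the Lagrange identity $J'(\bar u)=\lambda_1\bar u_1+\lambda_2\bar u_2+\nu P'(\bar u)$ against this scaling direction annihilates both $J'$ (since $\tfrac{d}{ds}J(\bar u_s)|_{s=1}=P(\bar u)=0$) and the mass pairings (scaling preserves $\|\bar u_i\|_2^2$), leaving $-\tfrac{3B}{4}\nu=0$; hence $\nu=0$ and $\cM$ is a natural constraint.

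The identity $J|_\cM=A/6$ shows $J$ is coercive on $\cM$ in the gradient norm, and the Gagliardo-Nirenberg inequality --- in which the nonpositive cross term in $B$ can simply be dropped thanks to $\beta<0$ --- yields $B\le C(a_1,a_2,\mu_1,\mu_2)\,A^{3/2}$, forcing a uniform positive lower bound $A\ge c_0>0$ on $\cM$ and hence $m:=\inf_{\cM_r}J\ge c_0/6>0$. I would then minimize $J$ on $\cM_r$: any minimizing sequence $(u^n)$ is bounded in $(H^1_r(\R^3))^2$, so after extraction $u^n\rightharpoonup \bar u$ weakly and strongly in $L^p(\R^3)^2$ for $2<p<6$ by radial compactness. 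Passing to the limit in the quartic terms gives $B(\bar u)=\lim B(u^n)\ge 4c_0/3$, so $\bar u\neq 0$.

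The main obstacle will be \emph{attainment}: radial compactness delivers $L^p$-convergence only for $p>2$, so a priori $\|\bar u_i\|_2\le a_i$ and $P(\bar u)\le 0$. The cleanest route I see is to upgrade the minimizing sequence, via Ekeland's variational principle and the naturality of $\cM$, to a bounded Palais-Smale sequence for $J$ on $(S_{a_1}\times S_{a_2})_r$ at level $m$; its Lagrange-multiplier sequences $(\lambda_i^n)$ are bounded and their limits $\lambda_i$ are shown to be negative by a spectral argument. Namely, once $\bar u$ solves $(-\Delta-f_i)\bar u_i=\lambda_i\bar u_i$ with $f_i:=\mu_i\bar u_i^2+\beta\bar u_j^2\in L^{3/2}\cap L^\infty$ decaying at infinity, Persson's theorem gives $\sigma_{\mathrm{ess}}(-\Delta-f_i)=[0,\infty)$, so the positive $L^2$-eigenfunction $\bar u_i$ must correspond to a negative discrete eigenvalue $\lambda_i<0$. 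This negativity, combined with standard exponential-decay/elliptic-regularity estimates for the PS sequence, upgrades $L^p$- to $L^2$-convergence and yields $\bar u\in\cM_r$. Positivity of each $\bar u_i$ then follows from the invariance of $J$, $P$ and the constraints under $(u_1,u_2)\mapsto(|u_1|,|u_2|)$ (any sign change would strictly drop $A$ and contradict minimality via the scaling formula) together with the strong maximum principle applied to the equations with $\lambda_i<0$, while Palais' principle of symmetric criticality converts the radial critical point into a solution of \eqref{complete problem} in the full space.
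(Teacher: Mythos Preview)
Your natural-constraint setup is exactly the paper's: the Pohozaev manifold $\cM$ (the paper calls it $\cP$), the scaling fibration $s\mapsto u_s$, the identity $J|_{\cM}=A/6$, and the fact that constrained critical points on $\cM$ are free critical points on $S_{a_1}\times S_{a_2}$. The divergence is in the min-max scheme, and there it fails.

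Minimizing $J$ on $\cM_r$ cannot produce a solution with both components nontrivial. For any $\eps>0$ one can take $u_2^\eps$ close to the scalar ground state $w_2\in\cM_{a_2,\mu_2}$ and $u_1^\eps\in S_{a_1}^r$ with disjoint support and $\|\nabla u_1^\eps\|_2$ arbitrarily small (just dilate), then project onto $\cM$; since $\beta<0$ the cross term vanishes and one obtains $(u_1^\eps,u_2^\eps)\in\cM_r$ with $J(u_1^\eps,u_2^\eps)<\ell(a_2,\mu_2)+\eps$. Hence $m=\inf_{\cM_r}J\le\min\{\ell(a_1,\mu_1),\ell(a_2,\mu_2)\}$. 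Along such a sequence $\|\nabla u_1^n\|_2\to 0$ while $\|u_1^n\|_2=a_1$, so the weak $H^1$ limit is $(0,w_2)\notin S_{a_1}\times S_{a_2}$. Your claim ``$B(\bar u)\ge 4c_0/3$, so $\bar u\neq 0$'' is correct but only gives that \emph{one} component survives; it does not prevent $\bar u_1\equiv 0$. And once $\bar u_1\equiv 0$, the Persson argument for $\lambda_1<0$ is vacuous: there is no positive $L^2$-eigenfunction for the first operator, so you cannot bootstrap $L^p$- to $L^2$-convergence for the first component, and the constraint $\|\bar u_1\|_2=a_1$ is lost.

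The paper repairs this not by minimizing but by running a mountain pass on $\cM_r$ between two carefully constructed pairs close (in $H^1\times\cD^{1,2}$) to $(w_1,0)$ and $(0,w_2)$; the key estimate is that on the boundary of a small $H^1\times\cD^{1,2}$-neighborhood of $(w_1,0)$ one has $J\ge\ell(a_1,\mu_1)+\bar C$ for some $\bar C>0$, so the mountain-pass level satisfies $c>\max\{\ell(a_1,\mu_1),\ell(a_2,\mu_2)\}$. Then, in the compactness step, if one component of the weak limit were zero the remaining one would solve the scalar problem and force $c=\ell(a_i,\mu_i)$, a contradiction. Your Persson/spectral idea is essentially the paper's Liouville lemma in disguise, but it only becomes usable once the level strictly exceeds the scalar ground-state energies, and that strict inequality is precisely what direct minimization cannot deliver.
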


For $a_1$, $a_2$, $\mu_1$ and $\mu_2$ fixed, we find then a family $\{(\lambda_{1,\beta},\lambda_{2,\beta},\bar u_{1,\beta},\bar u_{2,\beta}): \beta<0\}$. Our next result shows that phase-separation occurs as $\beta\to-\infty$.

\begin{theorem}\label{thm: phase sep}
Let $N=3$, and let $\mu_1,\mu_2,a_1,a_2>0$ be fixed. Then, as $\beta \to -\infty$, we have (up to a subsequence):
\begin{itemize}
\item[($i$)] $(\lambda_{1,\beta},\lambda_{2,\beta}) \to (\lambda_1,\lambda_2)$, with $\lambda_1,\lambda_2 \le 0$;
\item[($ii$)] $(\bar u_{1,\beta},\bar u_{2,\beta}) \to (\bar u_1,\bar u_2)$ in $\mathcal{C}^{0,\alpha}_{\loc}(\R^N)$ and in $H^1_{\loc}(\R^N)$;
\item[($iii$)] $\bar u_{1}$ and $\bar u_{2}$ are nonnegative Lipschitz continuous functions having disjoint positivity sets, in the sense that $\bar u_1 \bar u_2 \equiv 0$ in $\R^N$;
\item[($iv$)] the difference $\bar u_1-\bar u_2$ is a sign-changing radial solution of
\[
-\Delta w -\lambda_1 w^+ +\lambda_2 w^- = \mu_1 (w_1^+)^3 - \mu_2(w_1^-)^3 \qquad \text{in $\R^N$}.
\]
\end{itemize}
\end{theorem}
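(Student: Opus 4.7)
The plan is to implement the classical strategy for analyzing the strong-competition limit of coupled Schr\"odinger systems (Conti-Terracini-Verzini, Noris-Tavares-Terracini-Verzini, henceforth NTTV), adapted here to the normalized setting in which the Lagrange multipliers $\lambda_{i,\beta}$ also have to be controlled.

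First I would establish uniform a priori bounds in $\beta < 0$. The min-max construction of Theorem \ref{thm: beta<0} can be arranged so that the critical level $c_\beta := J(\bar u_{1,\beta},\bar u_{2,\beta})$ satisfies $c_\beta \le C$ uniformly: it is enough to evaluate $J$ on configurations with (nearly) disjoint supports, which kill the $\beta$-dependent cross term. The Pohozaev identity for the system in $\R^3$ combined with the equations produces the useful identity
\[
J(\bar u_{1,\beta},\bar u_{2,\beta}) \;=\; \tfrac{1}{3}\bigl(\|\nabla \bar u_{1,\beta}\|_2^2 + \|\nabla \bar u_{2,\beta}\|_2^2\bigr) + \tfrac{1}{2}\bigl(\lambda_{1,\beta}a_1^2 + \lambda_{2,\beta}a_2^2\bigr),
\]
which, together with $\lambda_{i,\beta} < 0$ (Theorem \ref{thm: beta<0}) and a Gagliardo-Nirenberg analysis of the equation tested against $\bar u_{i,\beta}$, gives uniform bounds on $\|\bar u_{i,\beta}\|_{H^1}$ and on $|\lambda_{i,\beta}|$, plus the key segregation estimate $|\beta|\int_{\R^3}\bar u_{1,\beta}^2\bar u_{2,\beta}^2 \le C$.

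Next, Moser/De Giorgi iteration applied to the equation (in which the $\beta$-term has a favourable sign) upgrades $H^1$ to $L^\infty_\loc$ bounds that are uniform in $\beta$. This places the family within the scope of the uniform $\mathcal{C}^{0,\alpha}_\loc$ and Lipschitz regularity theory for strongly competing systems (NTTV). Along a subsequence I then obtain $\lambda_{i,\beta}\to\lambda_i \le 0$ and $\bar u_{i,\beta}\to\bar u_i$ in $\mathcal{C}^{0,\alpha}_\loc(\R^3)\cap H^1_\loc(\R^3)$ with $\bar u_i \ge 0$ Lipschitz, which gives (i) and (ii). Combining local uniform convergence with the segregation estimate,
\[
\int_K \bar u_1^2\bar u_2^2 \;=\; \lim_{\beta\to-\infty}\int_K \bar u_{1,\beta}^2\bar u_{2,\beta}^2 \;\le\; \lim_{\beta\to-\infty}\frac{C}{|\beta|} \;=\; 0
\]
for every compact $K\subset\R^3$, so $\bar u_1\bar u_2\equiv 0$, which completes (iii).

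For (iv), I observe that on the open set $\{\bar u_1 > 0\}\cup\{\bar u_2 > 0\}$ the competitor vanishes in a neighborhood of every point (by continuity and phase separation), so each $\bar u_i$ is a classical solution of $-\Delta \bar u_i - \lambda_i \bar u_i = \mu_i \bar u_i^3$ on its positivity set; consequently $w := \bar u_1 - \bar u_2$ classically satisfies the equation in (iv) on that open set. To extend the identity to all of $\R^3$ in the distributional sense, I pass to the limit in
\[
-\Delta w_\beta \;=\; \lambda_{1,\beta}\bar u_{1,\beta} - \lambda_{2,\beta}\bar u_{2,\beta} + \mu_1\bar u_{1,\beta}^3 - \mu_2\bar u_{2,\beta}^3 \;-\; \beta\,\bar u_{1,\beta}\bar u_{2,\beta}\,w_\beta,
\]
where $w_\beta := \bar u_{1,\beta} - \bar u_{2,\beta}$: the first four terms pass by $H^1_\loc$-strong convergence, while the interaction term tends to $0$ in $\mathcal{D}'(\R^3)$ thanks to the quantitative smallness of $\|\bar u_{1,\beta}\bar u_{2,\beta}\|_{L^\infty_\loc}$, a standard consequence of the uniform Lipschitz regularity combined with the $L^2$-segregation bound. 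Radial symmetry of $w$ is inherited from each $\bar u_{i,\beta}$. The principal difficulty is precisely the uniform $\mathcal{C}^{0,\alpha}_\loc$/Lipschitz regularity for strongly competing solutions invoked above---a delicate result that historically requires Almgren-type monotonicity formulae and blow-up analysis---together with the ancillary check that the min-max level is indeed $\beta$-uniformly bounded, which requires inspecting the variational scheme used to prove Theorem \ref{thm: beta<0}.
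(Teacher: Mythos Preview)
Your proposal is correct and follows essentially the same route as the paper: a $\beta$-uniform bound on the min-max level (obtained, as in the paper, by testing the variational scheme on paths with segregated components), the Pohozaev identity on $\mathcal{P}_\beta$ to deduce uniform $H^1$ and $\lambda$ bounds, a Br\'ezis--Kato/Moser argument exploiting $-\Delta \bar u_{i,\beta}\le \mu_i\bar u_{i,\beta}^3$ for uniform $L^\infty$ control, and finally the NTTV-type uniform H\"older/Lipschitz theory for strongly competing systems. One cosmetic remark: the paper extracts the $H^1$ bound more directly via $J_\beta(\bar u_{1,\beta},\bar u_{2,\beta})=\tfrac{1}{6}\sum_i\|\nabla \bar u_{i,\beta}\|_2^2$ (which follows immediately from $G=0$), whereas your displayed identity still needs one more substitution to reach the same conclusion; either way the argument is the same.
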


In order to prove Theorem \ref{thm: beta<0}, we devise a new approach, substantially different with respect to the one in \cite{BaJeSo}, based upon the introduction of a further constraint. Let
\begin{equation}\label{def G}
G(u_1,u_2) = \sum_{i=1}^2 \int_{\R^3} |\nabla u_i|^2 - \frac{3}{4}\sum_{i,j=1}^2  \int_{\R^3} \beta_{ij} u_i^2 u_j^2,
\end{equation}
and let
\begin{equation}\label{definition Pohozaev}
\mathcal{P}:= \left\{ (u_1,u_2) \in S_{a_1} \times S_{a_2} \mid G(u_1,u_2)=0 \right\}.
\end{equation}
As proved in \cite[Lemma 4.6]{BaJeSo}, any solution of \eqref{complete problem} stays in $\mathcal{P}$, the equation $G(u_1,u_2)=0$ being the Pohozaev identity for \eqref{complete problem}. The solution that was obtained in \cite[Theorem~1.2]{BaJeSo} for $\beta>0$ large by a mountain pass argument on $S_{a_1} \times S_{a_2}$ was characterized as minimizer of $J$ on $\mathcal{P}$. In the present paper we show that one can actually apply min-max methods to $J$ constrained to $\mathcal{P}$ in order to obtain solutions of \eqref{complete problem}.

\begin{theorem}\label{thm: natural intro}
The set $\mathcal{P}$ is a $\mathcal{C}^1$-manifold of codimension $1$ in $S_{a_1} \times S_{a_2}$, and moreover:
\begin{itemize}
\item[($i$)] If there exists a Palais-Smale sequence $\{(\tilde u_{1,n}, \tilde u_{2,n})\}$ for $J$ restricted to $\mathcal{P}$ at level $\ell \in \R$, then there exists a possibly different Palais-Smale sequence $\{(u_{1,n}, u_{2,n})\} \subset \mathcal{C}^\infty_c(\R^3)$ for $J$ restricted to $S_{a_1} \times S_{a_2}$ at the same level $\ell \in \R$.
\item[($ii$)] If $(u_1,u_2)$ is a critical point of $J$ restricted on $\mathcal{P}$, then $(u_1,u_2)$ is a critical point of $J$ restricted on $S_{a_1} \times S_{a_2}$, and hence a solution to \eqref{complete problem}.
\end{itemize}
\end{theorem}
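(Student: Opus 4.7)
My approach would be built around the $L^2$-preserving dilation
\[
\Psi_t(u_1,u_2) = \bigl(t^{3/2} u_1(t\,\cdot),\, t^{3/2} u_2(t\,\cdot)\bigr),\qquad t>0,
\]
which maps $S_{a_1}\times S_{a_2}$ into itself. A direct change of variables yields the polynomial expressions
\[
J(\Psi_t u) = \tfrac{t^2}{2}A(u) - \tfrac{t^3}{4}B(u),\qquad G(\Psi_t u) = t^2 A(u) - \tfrac{3t^3}{4}B(u),
\]
where $A(u) := \sum_i \|\nabla u_i\|_2^2$ and $B(u) := \sum_{i,j}\beta_{ij}\int u_i^2u_j^2$. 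Differentiating at $t=1$ produces the two identities driving the whole proof: the infinitesimal Pohozaev identity $\partial_t J(\Psi_t u)|_{t=1} = G(u)$, and
\[
\partial_t G(\Psi_t u)\big|_{t=1} = 2A(u) - \tfrac{9}{4}B(u) = -A(u) \quad \text{for } u\in\mathcal{P},
\]
where the last equality uses $A = \tfrac{3}{4}B$ on $\mathcal{P}$.

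For the manifold statement, note $A(u)>0$ on $S_{a_1}\times S_{a_2}$, since $\nabla u_i\equiv 0$ would contradict $a_i>0$. The tangent vector of the scaling curve $t\mapsto \Psi_t u$ therefore lies in $T_u(S_{a_1}\times S_{a_2})$ but is transverse to $\{G=0\}$ at each $u\in\mathcal{P}$. Hence $0$ is a regular value of $G|_{S_{a_1}\times S_{a_2}}$ along $\mathcal{P}$, and the implicit function theorem gives that $\mathcal{P}$ is a $C^1$ submanifold of codimension~$1$ in $S_{a_1}\times S_{a_2}$.

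For (ii), if $(u_1,u_2)$ is a critical point of $J|_{\mathcal{P}}$, the Lagrange multiplier rule provides $\lambda_1,\lambda_2,\mu\in\R$ with
\[
J'(u) - \lambda_1(u_1,0) - \lambda_2(0,u_2) - \mu\,G'(u) = 0\quad\text{in }(H^{-1})^2,
\]
which in components reads $-(1-2\mu)\Delta u_i = \lambda_i u_i + (1-3\mu)\sum_j \beta_{ij} u_j^2 u_i$. Testing with $u_i$ and summing gives the Nehari-type identity $(1-2\mu)A = \lambda_1 a_1^2+\lambda_2 a_2^2 + (1-3\mu)B$, and the Pohozaev identity associated with this elliptic system (valid after standard regularity) gives $(1-2\mu)A = 3(\lambda_1 a_1^2+\lambda_2 a_2^2) + \tfrac{3(1-3\mu)}{2}B$. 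Eliminating $\sum\lambda_i a_i^2$ yields $(1-2\mu)A = \tfrac{3(1-3\mu)}{4}B$; combined with $A = \tfrac{3}{4}B$ on $\mathcal{P}$ and $B>0$ (otherwise $A=0$, contradicting $a_i>0$), this forces $\mu=0$. Thus $u$ satisfies the Euler--Lagrange system on $S_{a_1}\times S_{a_2}$, proving (ii).

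For (i), the same bookkeeping applies to the Palais--Smale sequence: Lagrange multipliers yield $\lambda_{1,n},\lambda_{2,n},\mu_n$ for which the identity above holds up to an $o(1)$ error in $(H^{-1})^2$, and the Nehari/Pohozaev identities become approximate. The main obstacle is to show $\mu_n\to 0$: the Pohozaev identity would require pairing the perturbed Euler--Lagrange equation against $x\cdot\nabla\tilde u_{i,n}$, which is in general not an $H^1$ function, so a truncation/regularization procedure must be deployed, together with a uniform lower bound on $A(\tilde u_n)$ inherited from $\tilde u_n\in\mathcal{P}$ via a Gagliardo--Nirenberg argument (ruling out degenerate PS sequences). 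Once $\mu_n\to 0$, the sequence $\{\tilde u_n\}$ is PS for $J|_{S_{a_1}\times S_{a_2}}$ at level $\ell$; a sequence in $\mathcal{C}^\infty_c(\R^3)$ is then obtained by approximating each $\tilde u_{i,n}$ in $H^1$ by smooth compactly supported functions and rescaling to enforce the $L^2$ constraints, using continuity of $J$ and $J'$ in $H^1$ to preserve the PS property.
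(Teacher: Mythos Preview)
Your overall plan is in the right spirit, but there is a genuine gap in the manifold step and in part (i), and the paper resolves both by a single device you do not use.

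\textbf{Manifold statement.} The tangent vector to the scaling curve, $\frac{3}{2}u_i + x\cdot\nabla u_i$, is in general \emph{not} an element of $H^1(\R^3)$, hence not in $T_u(S_{a_1}\times S_{a_2})$. So the computation $\partial_t G(\Psi_t u)|_{t=1}=-A(u)$, while correct as a derivative of a real function of $t$, does not produce a tangent direction on which $dG(u)$ is nonzero, and the implicit function theorem cannot be applied as you state. The paper handles this differently: it argues by contradiction that if $dG$ were a linear combination of $dG_1,dG_2$ at some $u\in\mathcal{P}$, then $u$ would solve a rescaled elliptic system, and the Pohozaev identity for \emph{that} system (valid after regularity for the actual solution $u$) forces $A(u)=0$, contradicting $u\in S_{a_1}\times S_{a_2}$. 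Same Pohozaev content, but formulated so that no $H^1$ tangent vector is required.

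\textbf{Part (ii).} Your Lagrange-multiplier route is correct and genuinely different from the paper's (one should check separately that $\mu\neq\tfrac12$, so the system is elliptic and regularity applies; this follows easily from the pointwise algebraic equation one would get if $\mu=\tfrac12$). The paper instead proves the tangent splitting $T_u(S_{a_1}\times S_{a_2})=T_u\mathcal{P}\oplus\R\xi$ with $\xi=\frac{d}{ds}|_{s=0}(s\star u)$ for $u\in\mathcal{P}\cap\mathcal{C}^\infty_c$, observes $dJ(u)[\xi]=G(u)=0$, and then passes to general $u$ by density. Your argument is more direct for a single critical point; the paper's pays off because the same mechanism works verbatim for sequences.

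\textbf{Part (i).} The approximate Pohozaev identity you flag as the main obstacle is exactly what the paper avoids. Instead of proving $\mu_n\to 0$ via truncated test functions $x\cdot\nabla\tilde u_{i,n}$, the paper first shows $\mathcal{P}\cap\mathcal{C}^\infty_c$ is dense in $\mathcal{P}$ (approximate in $H^1\cap S_{a_1}\times S_{a_2}$, then project back onto $\mathcal{P}$ using the scaling $s_u\star u$), replaces the PS sequence by a nearby smooth one \emph{still on} $\mathcal{P}$, and then applies the exact splitting: for smooth $u_n\in\mathcal{P}$, $\|dJ(u_n)\|_{(T_{u_n}(S_{a_1}\times S_{a_2}))^*}=\|dJ(u_n)\|_{(T_{u_n}\mathcal{P})^*}$ because $dJ(u_n)$ vanishes identically on the complementary line $\R\xi_n$. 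No asymptotic multiplier analysis is needed. This is the technical idea your proposal is missing, and it simultaneously repairs the $H^1$ issue in the manifold step by restricting the scaling vector to smooth functions.
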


One often refers to property ($ii$) saying that \emph{$\mathcal{P}$ is a natural constraint}. As far as we know this is the first example of a natural constraint when dealing with normalized solutions; see the Remark \ref{rem: nat const} below for a more extended discussion.

With the previous result in hands, we prove Theorem \ref{thm: beta<0} finding a critical point of mountain pass type for the constrained functional $J|_{\mathcal{P}}$.


We point out that this natural constraint approach is very flexible and, suitably modified, permits also to recover the known existence and multiplicity results regarding normalized solutions for the nonlinear Schr\"odinger equation
\begin{equation}\label{single}
\begin{cases}
-\Delta u -\lambda u = f(u) & \text{in $\R^N$} \\
u >0, u \in H^1(\R^N) \\
\int_{\R^N} u^2 = a^2,
\end{cases}
\end{equation}
under appropriate assumptions on $f$. Solutions to \eqref{single} are critical points of the functional
\begin{equation}\label{functional single intro}
I(u):= \int_{\R^N} |\nabla u|^2 - F(u), \quad F(s):= \int_0^s f(\sigma)\,d\sigma,
\end{equation}
on the sphere $S_a$. The case of the pure power nonlinearity $f(s) = |s|^{p-2} s$ can be treated using the results available for the problem with fixed $\lambda<0$, properly scaling the equation; such an approach fails when $f$ is inhomogeneous. For inhomogeneous $f$ two different pictures are possible, depending on whether or not $I$ can be globally minimized on $S_{a}$. For the power nonlinearity, the former case, called \emph{$L^2$-subcritical}, takes place if $2<p<2+4/N$, and was firstly considered in \cite{Stu1,Stu2}. Afterwards it was also addressed with the aid of the concentration-compactness principle \cite{Lio1,Lio2}. If $2+4/N \le  p <2N/(N-2)$, then $I|_{S_a}$ cannot be minimized, and the problem is considerably more involved. The so called \emph{$L^2$-critical} case $p=2+4/N$ is particularly delicate, and will be discussed in Remark \ref{rem: L^2 critical}. The $L^2$-supercritical and Sobolev subcritical case $2+4/N <  p <2N/(N-2)$ was considered only in the two papers \cite{Jea,BaDeV}. In \cite{Jea} it is proved the existence of a mountain pass positive normalized solution. In \cite{BaDeV}, putting in evidence the ``fountain" type structure of $I|_{S_a}$, the authors proved the existence of infinitely many normalized solutions. The precise assumptions considered in \cite{Jea,BaDeV} on the nonlinearity $f$ are the following:
\begin{itemize}
\item[($f1$)] $f: \R \to \R$ is continuous and odd;
\item[($f2$)] $N \ge 2$, and there exists $\alpha, \beta \in \R$,
\[
2+\frac{4}{N} < \alpha \le \beta < 2^*:= \begin{cases} +\infty & \text{if $N=1,2$} \\ \frac{2N}{N-2} \end{cases},
\]
such that
\[
0 < \alpha F(s) \le f(s) s \le \beta F(s) \qquad \forall s \in \R \setminus \{0\};
\]
\end{itemize}

In this paper we give an alternative simple proof of the existence and multiplicity results in \cite{BaDeV, Jea}. We emphasize that here we use the additional assumption ($f3$) below, which is not needed in \cite{BaDeV, Jea}.

\begin{theorem}\label{thm: main single}
Let $N \ge 2$, $a>0$, and let $f$ satisfy ($f1$), ($f2$), and
\begin{itemize}
\item[($f3$)] the map $\tilde F(s):= f(s)s - 2F(s)$ is of class $\C^1$, and
\[
\tilde F'(s) s > \left(2+\frac{4}{N}\right) \tilde F(s).
\]
\end{itemize}
Then \eqref{single} has infinitely many radial solutions $\{u_k: k \ge 1\}$ with increasing energy, and $u_1$ is positive in $\R^N$.
\end{theorem}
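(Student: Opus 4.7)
The plan is to adapt the natural-constraint machinery developed for the system in Theorem~\ref{thm: natural intro} to the scalar problem \eqref{single}, and then to run a Krasnoselskii-genus minimax on the resulting manifold. I would work throughout in the radial class $H^1_r(\R^N)$, which provides compact embeddings $H^1_r\hookrightarrow L^p$ for $2<p<2^*$ and is preserved by the $\mathbb Z/2$-action $u\mapsto -u$. Define
\[
G(u) := \int_{\R^N}|\nabla u|^2\,dx - \frac{N}{2}\int_{\R^N}\tilde F(u)\,dx,\qquad
\mathcal P_a := \{u\in S_a\cap H^1_r : G(u)=0\}.
\]
Since $G(u)=0$ is the Pohozaev identity for \eqref{single}, every radial solution sits in $\mathcal P_a$.

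First I would analyze the fiber map $\Psi_u(t):=I(u_t)$ associated to the $L^2$-preserving dilation $u_t(x):=t^{N/2}u(tx)$. A direct computation gives $\Psi_u'(t) = G(u_t)/t$, so critical points of $\Psi_u$ correspond exactly to points on $\mathcal P_a$, and at any $u\in\mathcal P_a$
\[
\Psi_u''(1) \;=\; -\frac{N^2}{4}\int_{\R^N}\bigl[\tilde F'(u)\,u - \bigl(2 + \tfrac{4}{N}\bigr)\tilde F(u)\bigr]\,dx,
\]
which is strictly negative by (f3). Each critical point of $\Psi_u$ is therefore a strict local maximum, so it is unique; existence of $t(u)>0$ with $u_{t(u)}\in\mathcal P_a$ comes from $\Psi_u(0^+)=0$ and $\Psi_u(t)\to-\infty$ as $t\to\infty$ (the latter using $\alpha>2+4/N$). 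The same nondegeneracy yields the $C^1$-manifold structure of $\mathcal P_a$ and, by a verbatim adaptation of Theorem~\ref{thm: natural intro}(ii), the natural-constraint property with Lagrange multiplier $\lambda<0$ (the bound $\beta<2^*$ in (f2) gives the sign). Coercivity of $I|_{\mathcal P_a}$ follows by combining $\int|\nabla u|^2 = \frac{N}{2}\int\tilde F(u)$ with $\tilde F(u)\ge(\alpha-2)F(u)$ from (f2) to obtain $I(u)\ge\bigl(\tfrac12-\tfrac{2}{N(\alpha-2)}\bigr)\int|\nabla u|^2$, whose coefficient is positive; Palais-Smale compactness at every level then follows from boundedness of PS sequences, radial $L^p$-compactness, and the negativity of the extracted Lagrange multiplier.

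At this point the standard Krasnoselskii genus scheme on the complete $\mathbb Z/2$-symmetric $C^1$-manifold $\mathcal P_a$ applies: set
\[
c_k := \inf_{A\in\Gamma_k}\sup_{u\in A}I(u),\qquad \Gamma_k := \{A\subset\mathcal P_a : A=-A\text{ compact},\ \gen(A)\ge k\}.
\]
Non-emptiness of $\Gamma_k$ and $c_k<\infty$ follow by choosing a $k$-dimensional subspace $V_k\subset C_c^\infty(\R^N)\cap H^1_r$ and projecting its $L^2$-sphere onto $\mathcal P_a$ via $u\mapsto u_{t(u)}$; the Lusternik-Schnirelmann deformation argument on $\mathcal P_a$, legitimized by the natural-constraint property, shows each $c_k$ is a critical value, and coercivity forces $c_k\to\infty$. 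Positivity of $u_1$, attained at $c_1=\inf_{\mathcal P_a}I$, follows from Schwarz symmetrization: since $F$ is even, $\int F(u)=\int F(u^*)$ by equimeasurability, while $\int|\nabla u^*|^2\le\int|\nabla u|^2$ by P\'olya--Szeg\H o, so via the fiber-map characterization $I(u)=\max_{t>0}I(u_t)$ a nonnegative radially decreasing minimizer exists; the maximum principle together with $\lambda<0$ then yields $u_1>0$. The chief technical hurdle is Step~1: extracting the entire structure of $\mathcal P_a$ (manifold, unique fiber maximum, natural constraint) from the single pointwise inequality (f3); the computation of $\Psi_u''(1)$ above packages this cleanly and is the pivotal observation that makes the rest of the argument routine.
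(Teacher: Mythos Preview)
Your proposal is correct and follows essentially the same route as the paper: the Pohozaev manifold $\mathcal{M}$ as natural constraint (Theorem~\ref{thm: constraint single intro}), coercivity and Palais--Smale for $I|_{\mathcal{M}}$ (Lemmas~\ref{lem: bdd below} and~\ref{PS on M}), infinite genus via the odd continuous projection $u\mapsto s_u\star u$ (Lemma~\ref{lem: LS cat}), and then equivariant Lusternik--Schnirelmann theory. The only cosmetic difference is in the positivity step: you invoke Schwarz symmetrization together with the fiber-map characterization $I(u)=\max_{t>0}I(u_t)$, whereas the paper more directly observes that $|u|\in\mathcal{M}$ with $I(|u|)=I(u)$ since $\tilde F$ is even.
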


Our proof of Theorem \ref{thm: main single} is based upon the search for critical points of $I$ constrained on
\begin{equation}\label{def M intro}
\mathcal{M}  :=\left\{ u \in S_a: G(u) = 0\right\},
\end{equation}
where
\begin{equation}\label{def G single}
\begin{split}
G(u) :&=\int_{\R^N} |\nabla u|^2 - \int_{\R^N}  \left( \frac{N}{2} f(u)u-N F(u) \right) \\
& = \int_{\R^N} |\nabla u|^2 - \frac{N}{2} \int_{\R^N} \tilde F(u).
\end{split}
\end{equation}
%
It turns out that $\mathcal{M}$ is a natural constraint, as expressed by the following statement.

\begin{theorem}\label{thm: constraint single intro}
Under ($f1$)-($f3$), the set $\mathcal{M}$ is a $\C^1$ manifold, and moreover:
\begin{itemize}
%
\item[($i$)] If there exists a Palais-Smale sequence $\{\tilde u_{n}\}$ for $I$ restricted to $\mathcal{M}$ at level $\ell \in \R$, then there exists a possibly different Palais-Smale sequence $\{u_{n}\} \subset \mathcal{C}^\infty_c(\R^3)$ for $I$ restricted to $S_a$ at the same level $\ell \in \R$.
\item[($ii$)] If $u$ is a critical point of $I$ restricted on $\mathcal{M}$, then $u$ is a critical point of $I$ restricted on $S_{a}$, and hence a solution to \eqref{single}.
\end{itemize}
\end{theorem}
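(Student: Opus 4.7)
The plan is to adapt the natural-constraint strategy used for the system (Theorem \ref{thm: natural intro}) to the scalar setting, built around the $L^2$-preserving dilation $u_t(x) := t^{N/2} u(tx)$, which leaves $S_a$ invariant and is the canonical source of a transversal direction to $\mathcal{M}$ inside $S_a$.

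\textit{Manifold structure.} For any $u \in S_a$, setting $g_u(t) := G(u_t)$, a direct computation gives
\[
g_u(t) = t^2 \int_{\R^N} |\nabla u|^2 - \frac{N}{2} t^{-N} \int_{\R^N} \tilde F\bigl(t^{N/2} u\bigr),
\]
and thus
\[
g_u'(1) = 2 \int_{\R^N} |\nabla u|^2 + \frac{N^2}{2} \int_{\R^N} \tilde F(u) - \frac{N^2}{4} \int_{\R^N} \tilde F'(u)\, u.
\]
For $u \in \mathcal{M}$, eliminating $\int|\nabla u|^2$ via $G(u) = 0$ and applying ($f3$) pointwise yields $g_u'(1) < 0$. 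Since the scaling tangent $w := \tfrac{d}{dt} u_t|_{t=1}$ is tangent to $S_a$ (the $L^2$-norm being dilation-invariant), this forces $dG(u)|_{T_u S_a}$ to be nontrivial at every $u \in \mathcal{M}$, and by the regular value theorem $\mathcal{M}$ is a $\mathcal{C}^1$ submanifold of $S_a$ of codimension one.

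\textit{Part (ii).} If $u$ is a critical point of $I|_\mathcal{M}$, Lagrange multipliers provide $\lambda, \mu \in \R$ with $I'(u) = \lambda u + \mu\, G'(u)$ in $H^{-1}$; the goal is to force $\mu = 0$. Along the scaling curve, $\tfrac{d}{dt} I(u_t)|_{t=1} = G(u) = 0$; on the other hand, since $u_t \in S_a$ for all $t$, the $\lambda u$-part contributes nothing to the $t$-derivative, so the Lagrange identity gives $\tfrac{d}{dt} I(u_t)|_{t=1} = \mu\, g_u'(1)$. Combining and using $g_u'(1) \neq 0$ from the first step forces $\mu = 0$, whence $I'(u) = \lambda u$ and $u$ solves \eqref{single}. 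The one technicality is that $w = \tfrac{N}{2} u + x \cdot \nabla u$ may not lie in $H^1(\R^N)$; this is bypassed by reading the identities above as derivatives of scalar functions of $t$, or, equivalently, by recasting the Lagrange condition as the PDE $(1 - 2\mu)(-\Delta u) = \lambda u + f(u) - \tfrac{\mu N}{2} \tilde F'(u)$, combining its Pohozaev identity (test against $x \cdot \nabla u$) with the equation tested against $u$ and with the constraint $G(u) = 0$, and arriving at the algebraic relation
\[
\mu \Bigl[ \int_{\R^N} \tilde F'(u)\, u - \bigl(2 + \tfrac{4}{N}\bigr) \int_{\R^N} \tilde F(u) \Bigr] = 0,
\]
whose bracket is strictly positive by ($f3$).

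\textit{Part (i).} Given a PS sequence $\{\tilde u_n\} \subset \mathcal{M}$ for $I|_\mathcal{M}$ at level $\ell$, I extract the Lagrange multipliers $(\lambda_n, \mu_n)$ from $I'(\tilde u_n) - \lambda_n \tilde u_n - \mu_n G'(\tilde u_n) \to 0$ in $H^{-1}$. Running the computation of part (ii) in quantitative form produces $\mu_n \to 0$ provided the bracket above stays bounded away from zero, which in turn follows from a-priori $H^1$-bounds on $\{\tilde u_n\}$ derived from $I(\tilde u_n) \to \ell$, the PS condition, membership in $\mathcal{M}$, and ($f2$). Consequently $\{\tilde u_n\}$ is already a PS sequence for $I|_{S_a}$ at level $\ell$, and a standard mollification-plus-renormalization step replaces it by an equivalent sequence in $\mathcal{C}^\infty_c(\R^N) \cap S_a$ preserving the PS property. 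The main obstacle I anticipate is precisely this uniform non-degeneracy of the bracket along a merely \emph{approximate} critical sequence, which requires securing both $H^1$-boundedness and a uniform positive lower bound on $\int \tilde F(\tilde u_n)$.
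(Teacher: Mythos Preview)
Your manifold argument and part~(ii) are essentially correct in spirit, though you frame things through Lagrange multipliers while the paper works directly with the tangent-space splitting $T_u S_a = T_u\mathcal{M} \oplus \R w$ (Lemma~\ref{lem: splitting single}) together with the identity $dI(u)[w]=G(u)=0$ (Lemma~\ref{lem: crit P single}). Both routes rest on the same computation $g_u'(1)<0$ coming from ($f3$).

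The genuine gap is in part~(i). Your plan is to extract multipliers $(\lambda_n,\mu_n)$ and then force $\mu_n\to 0$ by ``running part~(ii) in quantitative form'', i.e.\ by pairing the approximate Euler--Lagrange equation with the scaling direction, or equivalently by a Pohozaev-type identity. But for a merely approximate solution the equation holds only up to an error $\epsilon_n\to 0$ in $H^{-1}$, and the scaling tangent $w_n=\tfrac{N}{2}\tilde u_n + x\cdot\nabla\tilde u_n$ is not in $H^1(\R^N)$ for generic $\tilde u_n\in H^1$. Hence the pairing $\langle\epsilon_n,w_n\rangle$ is undefined and the Pohozaev computation cannot be performed. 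The obstacle you anticipate (uniform positivity of the bracket) is real but secondary; the $H^1$-bounds and the lower bound on $\int\tilde F(\tilde u_n)$ are indeed available (Lemmas~\ref{lem: non-degeneracy} and~\ref{lem: bdd below}), yet they do not repair the pairing. Your final mollification step comes too late: it is applied \emph{after} you claim $\{\tilde u_n\}$ is already Palais--Smale on $S_a$, so it cannot rescue the argument that precedes it.

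The paper's resolution is to reverse the order. First one proves that $\mathcal{M}\cap\mathcal{C}^\infty_c(\R^N)$ is dense in $\mathcal{M}$: given $u\in\mathcal{M}$, take $u_n\in\mathcal{C}^\infty_c\cap S_a$ with $u_n\to u$, and project back onto $\mathcal{M}$ via $s_{u_n}\star u_n$ (here $s_{u_n}\to 0$, so the projection converges). This yields a smooth Palais--Smale sequence $\{u_n\}\subset\mathcal{M}\cap\mathcal{C}^\infty_c$ at the same level. For such smooth $u_n$ the scaling tangent $w_n$ lies in $\mathcal{C}^\infty_c\subset H^1$, the splitting $T_{u_n}S_a = T_{u_n}\mathcal{M}\oplus\R w_n$ is honest, and since $dI(u_n)[w_n]=G(u_n)=0$ one obtains directly
\[
\|dI(u_n)\|_{(T_{u_n}S_a)^*} = \|dI(u_n)\|_{(T_{u_n}\mathcal{M})^*}\to 0,
\]
with no Lagrange multiplier analysis needed. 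This is the content of Theorem~\ref{thm: constraint single} parts (i)--(ii), and it sidesteps entirely the issue you ran into.
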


We will see that the constrained functional $I$ restricted to $\mathcal{M}$ is bounded from below, coercive, and satisfies the Palais-Smale condition. Therefore, Theorem \ref{thm: main single} will be a simple consequence of the equivariant Lusternik-Schirelman theory.

\begin{remark}
Assumption ($f3$) is not needed in \cite{BaDeV, Jea} for proving the existence of solutions of \eqref{single} (actually it is required in \cite{Jea} in order to treat the case $N=1$). It is an interesting question whether ($f3$) can be omitted in Theorem~\ref{thm: main single} also with our approach. Then $\cM$ will not be a manifold anymore but it still contains all solutions of \eqref{single}. This suggests that Theorem~\ref{thm: main single} could be approached using the critical point theory on metric spaces from \cite{CDM}. In any case, we observe that for a wide class of nonlinearities, such as those of type
\[
f(s) = \sum_{i=1}^m \mu_i |s|^{p_i-2} s \quad \text{with} \quad \mu_i >0,
\]
this assumption is already included in ($f2$). Notice also that, even if $\tilde F \in \C^1$, the function $f$ need not be differentiable in the origin.
\end{remark}

\begin{remark}\label{rem: nat const}
Roughly speaking, the manifolds $\mathcal{P},\cM$ play, for problems \eqref{complete problem}, \eqref{single}, respectively, the role that the Nehari manifold plays for equations or systems with fixed frequencies (see e.g.\ \cite{AmbMal}, and the references therein). Recall that the Nehari manifold is defined by the equation $d\mathcal{A}(u)u=0$ ($\mathcal{A}$ denotes the action functional) which is not available when looking for normalized solutions. For scalar equations without normalization constraint the Pohozaev manifold has been used in \cite{Sha} in order to find a ground state solution of a nonlinear Klein-Gordon equation. More recently the Pohozaev manifold has been used in \cite{AzzPom,JeaTan,LehMai} for investigating nonlinear (scalar) Schr\"odinger equations (without normalization constraint). In \cite{AzzPom,JeaTan,LehMai,Sha} the nonlinearity was such that the equation $d\mathcal{A}(u)u=0$ did not necessarily define a manifold whence the authors worked with the Pohozaev identity instead. The authors of \cite{AzzPom,JeaTan,Sha} were interested in least-energy solutions. They showed that the mountain pass solution corresponds to a minimizer of the associated functional on the Pohozaev manifold. In \cite{LehMai} the authors set up a min-max scheme for the functional constrained to $\cP$, obtaining a Palais-Smale (or Cerami) sequence for the constrained functional. (It is unclear how to obtain a Cerami sequence in the full space as claimed in \cite{LehMai}. Probably a result like Theorem~\ref{thm: natural intro} or Theorem~\ref{thm: constraint single intro} is needed.) Observe that in \cite{LehMai} the authors use the derivative $d\mathcal{A}(u)u$ in an essential way; this is not available in the normalized setting.
It seems that our paper is the first to set up critical point theory on the Pohozaev manifold under the $L^2$ constraint.
\end{remark}

%

We conclude the introduction mentioning further problems which we believe could be treated with our natural constraint approach, and discussing why we do not consider \eqref{complete problem} in $\R^2$.

\begin{remark}
Even though we focused on system \eqref{system}, we can treat more general power type problems such as
 \begin{equation}\label{power sys}
\begin{cases}
-\Delta u_1 -\lambda_1 u_1 = \mu_1 |u_1|^{p_1-2} u_1 +  \beta |u_1|^{r_1-2} |u_2|^{r_2} u_1 & \text{in $\R^N$}\\
-\Delta u_2 -\lambda_2 u_2 = \mu_2 |u_2|^{p_2-2} u_2 + \beta |u_1|^{r_1} |u_2|^{r_2-2} u_2 & \text{in $\R^N$}\\
\int_{\R^N} u_1^2 = a_1^2 \quad \int_{\R^N} u_2^2 = a_2^2,
\end{cases}
\end{equation}
or even systems with right hand sides $\pa_1 F(u_1,u_2)$, $\pa_2 F(u_1,u_2)$, under appropriate assumptions on $F$. Systems with an arbitrary number of components can be considered as well, i.e.\ also in these contexts it is possible to introduce the set $\mathcal{P}$, and to prove that it is a natural constraint, in the sense specified by Theorem \ref{thm: natural intro}. Notice that \eqref{complete problem} is a particular case of \eqref{power sys}, and we mention that existence results under different assumptions on the data of the problem have been obtained in \cite{BarJea,BaJeSo,GuoJea}. We believe that some of the results therein could be re-proved using $\mathcal{P}$ and adapting the method used here.

More generally, we believe that our approach can be adapted in many situations when we search for normalized solutions and a Pohozaev-type identity is available. In any case, several complications could arise.

With regard to this, we mention that the three problems \eqref{complete problem}, \eqref{single} and \eqref{power sys} considered in this paper have been studied also in bounded domains instead than in the whole space, see \cite{FibMer, NoTaVe2, NoTaVe} and the references therein. In such situations it is not clear how to define $\mathcal{P}$ or $\mathcal{M}$, since the Pohozaev identity involves boundary terms which are not necessarily well defined for $u \in H^1(\Omega)$.

Also the case when a non-autonomous potential is added in the equation (as done, for instance, in \cite{BufEstSer}, where a strongly indefinite problem is considered) deserves some special care. Indeed, our technique relies on the possibility of scaling/dilating the equation, and the presence of a potential would require extra efforts to be treated.
\end{remark}

\begin{remark}\label{rem: L^2 critical}
The existence of normalized solutions in the $L^2$-critical case is a very delicate problem. Let us consider the stationary NLS equation
\begin{equation}\label{pb rem}
-\Delta u -\lambda u = |u|^{p-2}u  \quad \text{in $\R^N$},  \quad \int_{\R^N} u^2 = a^2.
\end{equation}
If either $2<p<2+4/N$ or $2+4/N <p <2N/(N-2)$, for any $a>0$ the problem has a unique positive radial solution, which can be obtained by scaling the unique positive radial solution of
\[
-\Delta w+ w= |w|^{p-2} w \quad \text{in $\R^N$}.
\]
If on the other hand $p=2+4/N$, which is for instance the case of the cubic NLS equation (i.e.\ $p=4$) in $\R^2$, then there exists a uniquely determined $\bar a>0$ (depending only on the dimension) such that \eqref{pb rem} with $a=\bar a$ has infinitely many positive radial solutions (corresponding to different $\lambda$), while for $a \neq \bar a$ \eqref{pb rem} has no positive solution at all. This makes the $L^2$-critical problem extremely peculiar to treat, and as far as we know there is no result concerning inhomogeneous $f$ in this case. In the same spirit, even though we could introduce the set $\mathcal{P}$, we cannot treat system \eqref{system} in $\R^2$ with our technique, which is tailor-made for the $L^2$-supercritical and Sobolev-subcritical context.
\end{remark}

\paragraph{\textbf{Organization of the paper}} Theorem \ref{thm: natural intro} is the object of Section \ref{sec: natural}. The result is then used in the proof of existence of solutions to \eqref{complete problem}, Theorem \ref{thm: beta<0}, which is the content of Section \ref{sec: existence}. Theorem \ref{thm: phase sep}, is treated in Subsection \ref{sec: phase sep}. Sections \ref{sec: natural single} and \ref{sec: existence single} are devoted to the proofs of Theorems \ref{thm: constraint single intro} and \ref{thm: main single} respectively.

\medskip

\paragraph{\textbf{Notation}} For the sake of brevity, we often write $\mf{u}$ instead of $(u_1,u_2)$ for vector valued functions in $H^1(\R^3,\R^2)$. We recall that $\beta_{ii}:= \mu_i$ and $\beta_{12}=\beta_{21}:=\beta$. If $\mathcal{N}$ is a $\mathcal{C}^1$-manifold, we denote by $T_P \mathcal{N}$ the tangent space to $\mathcal{N}$ in the point $P\in\mathcal{N}$. Throughout the paper $C$ will always denote a positive constant, whose value is allowed to change also from line to line.

\section{A natural constraint for elliptic systems}\label{sec: natural}

In this section we aim at proving that the set $\mathcal{P}$, introduced in \eqref{definition Pohozaev}, is a natural constraint in the sense specified by Theorem \ref{thm: natural intro}. Actually, we will prove the following slightly stronger statement.

\begin{theorem}\label{thm: natural}
The set $\mathcal{P}\subset S_{a_1} \times S_{a_2}\subset H^1(\R^3,\R^2)$ is a $\mathcal{C}^1$-submanifold, and moreover:
\begin{itemize}
\item[($i$)] If $\{(u_{1,n},u_{2,n})\} \subset \mathcal{C}^\infty_c(\R^3) \cap \mathcal{P}$ is a Palais-Smale sequence for $J$ restricted to $\mathcal{P}$ at a certain level $\ell \in \R$, then  $\{(u_{1,n},u_{2,n})\}$ is a Palais-Smale sequence for $J$ restricted to $S_{a_1} \times S_{a_2}$.

\item[($ii$)] If there exists a Palais-Smale sequence $\{(\tilde u_{1,n},\tilde u_{2,n})\}$ for $J$ restricted to $\mathcal{P}$ at level $\ell \in \R$, then there exists a possibly different Palais-Smale sequence $\{( u_{1,n},  u_{2,n})\} \subset \mathcal{C}^\infty_c(\R^3)$ for $J$ restricted to $\mathcal{P}$ at the same level $\ell \in \R$. Moreover $\|u_{i,n}-\tilde u_{i,n}\|_{H^1} \to 0$ as $n \to \infty$ for $i=1,2$.

\item[($iii$)] If there exists a Palais-Smale sequence $\{(\tilde u_{1,n}, \tilde u_{2,n})\}$ for $J$ restricted to $\mathcal{P}$ at level $\ell \in \R$, then there exists a possibly different Palais-Smale sequence $\{(u_{1,n}, u_{2,n})\} \subset \mathcal{C}^\infty_c(\R^3)$ for $J$ restricted to $S_{a_1} \times S_{a_2}$ at the same level $\ell \in \R$. Moreover $\|u_{i,n}-\tilde u_{i,n}\|_{H^1} \to 0$ as $n \to \infty$ for $i=1,2$.

\item[($iv$)] Let $(u_1,u_2)$ be a critical point of $J$ restricted on $\mathcal{P}$. Then $(u_1,u_2)$ is a critical point of $J$ restricted on $S_{a_1} \times S_{a_2}$, and hence a solution to \eqref{complete problem}.
\end{itemize}
\end{theorem}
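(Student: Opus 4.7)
The plan rests on the one-parameter group of $L^2$-isometries given by the dilation
\[
s\star\mathbf{u}(x) := \bigl(e^{3s/2}u_1(e^sx),\,e^{3s/2}u_2(e^sx)\bigr),\qquad s\in\R,
\]
under which
\[
J(s\star\mathbf{u}) = \tfrac{e^{2s}}{2}A(\mathbf{u}) - \tfrac{e^{3s}}{4}C(\mathbf{u}),\qquad G(s\star\mathbf{u}) = e^{2s}A(\mathbf{u}) - \tfrac{3}{4}e^{3s}C(\mathbf{u}),
\]
with $A(\mathbf{u}) := \sum_i\|\nabla u_i\|_2^2$ and $C(\mathbf{u}) := \sum_{i,j}\beta_{ij}\int u_i^2u_j^2$. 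Thus $\mathcal{P}$ is precisely the set where the dilation orbit is critical for $J$, and on $\mathcal{P}$ the identity $A = \tfrac34 C$ forces $C>0$, which gives $\tfrac{d}{ds}G(s\star\mathbf{u})|_{s=0} = 2A-\tfrac94 C = -A(\mathbf{u}) < 0$ and $\tfrac{d^2}{ds^2}J(s\star\mathbf{u})|_{s=0} = -A(\mathbf{u}) < 0$. A secondary consequence, crucial below, is the clean identity $J(\mathbf{u}) = \tfrac16 A(\mathbf{u})$ on $\mathcal{P}$.

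\textbf{Manifold structure and natural constraint (iv).} The infinitesimal generator $\mathbf{v}(\mathbf{u}) := \bigl(\tfrac32 u_i + x\cdot\nabla u_i\bigr)_{i=1,2}$ is tangent to $S_{a_1}\times S_{a_2}$ (justified by approximation since scaling preserves each $L^2$-norm even when $\mathbf{v}\notin H^1$ for irregular $\mathbf{u}$), and a direct computation gives $dG|_{S_{a_1}\times S_{a_2}}(\mathbf{u})[\mathbf{v}(\mathbf{u})] = -A(\mathbf{u})\neq 0$ on $\mathcal{P}$. Hence $G|_{S_{a_1}\times S_{a_2}}$ is a submersion on $\mathcal{P}$, making $\mathcal{P}$ a $\mathcal{C}^1$-submanifold of codimension one. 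For (iv), a critical point $\mathbf{u}$ of $J|_\mathcal{P}$ yields a Lagrange multiplier $\tau$ with $dJ|_{S_{a_1}\times S_{a_2}}(\mathbf{u}) = \tau\,dG|_{S_{a_1}\times S_{a_2}}(\mathbf{u})$; pairing with $\mathbf{v}(\mathbf{u})$ produces $G(\mathbf{u}) = -\tau A(\mathbf{u})$, i.e.\ $0 = -\tau A(\mathbf{u})$, so $\tau=0$ and $\mathbf{u}$ solves \eqref{complete problem}.

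\textbf{PS transfer (i).} For a PS sequence $\mathbf{u}_n\in\mathcal{C}^\infty_c\cap\mathcal{P}$ at level $\ell$, the identity $J=\tfrac16 A$ on $\mathcal{P}$ instantly gives $A(\mathbf{u}_n)\to 6\ell$ and an $H^1$ bound, with $A(\mathbf{u}_n)$ bounded below (we may assume $\ell>0$). Since $\mathbf{u}_n$ is smooth and compactly supported, $\mathbf{v}_n := \mathbf{v}(\mathbf{u}_n)$ is an honest element of $T_{\mathbf{u}_n}(S_{a_1}\times S_{a_2})$. Decompose a unit tangent vector $\mathbf{w}$ via $T(S)=T\mathcal{P}\oplus\R\mathbf{v}_n$ as $\mathbf{w} = \mathbf{w}_0 + \sigma\mathbf{v}_n$ with $\sigma = -dG|_S(\mathbf{u}_n)[\mathbf{w}]/A(\mathbf{u}_n)$. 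Using $dJ(\mathbf{u}_n)[\mathbf{v}_n] = G(\mathbf{u}_n) = 0$ yields
\[
dJ|_{S_{a_1}\times S_{a_2}}(\mathbf{u}_n)[\mathbf{w}] \;=\; dJ|_\mathcal{P}(\mathbf{u}_n)[\mathbf{w}_0],
\]
and the estimate $\|\mathbf{w}_0\|_{H^1} \le 1 + \|dG|_S(\mathbf{u}_n)\|_{*}\|\mathbf{v}_n\|_{H^1}/A(\mathbf{u}_n)$ stays bounded (the first factor by the $H^1$-bound on $\mathbf{u}_n$, the denominator away from zero, and $\|\mathbf{v}_n\|_{H^1}$ via the pointwise structure of the smooth compactly supported $\mathbf{u}_n$). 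Thus $\|dJ|_{S_{a_1}\times S_{a_2}}(\mathbf{u}_n)\|_* \to 0$.

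\textbf{Approximation (ii), conclusion (iii), main obstacle.} For (ii), starting from a PS sequence $\{\widetilde{\mathbf{u}}_n\}\subset\mathcal{P}$: approximate $\widetilde{\mathbf{u}}_n$ in $H^1$ by $\mathbf{w}_n\in\mathcal{C}^\infty_c$, renormalize componentwise $\widehat{w}_{i,n} := (a_i/\|w_{i,n}\|_2)\,w_{i,n}$ onto $S_{a_1}\times S_{a_2}$, and then set $\mathbf{u}_n := s_n\star\widehat{\mathbf{w}}_n$ where $s_n$ is the unique scalar solving $G(\mathbf{u}_n) = 0$, whose existence and uniqueness follow from the strict concavity of $s\mapsto J(s\star\widehat{\mathbf{w}}_n)$ (valid since $C(\widehat{\mathbf{w}}_n) > 0$ for approximations close to $\widetilde{\mathbf{u}}_n\in\mathcal{P}$). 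Continuity of the whole construction yields $s_n\to 0$ and $\|\mathbf{u}_n - \widetilde{\mathbf{u}}_n\|_{H^1}\to 0$; then (iii) is just the composition of (ii) and (i). The main obstacle is guaranteeing that the PS-derivative norm is preserved to leading order after renormalization and projection, i.e.\ that no ``PS-energy'' is lost along the way; this rests on the uniform non-degeneracy $\psi''_{\mathbf{u}}(0) = -A(\mathbf{u}) < 0$ on bounded subsets of $\mathcal{P}$, which allows an implicit function theorem argument for the projection $\mathbf{w}\mapsto s(\mathbf{w})\star\mathbf{w}$ uniformly in $n$.
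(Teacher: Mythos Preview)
Your strategy coincides with the paper's: use the dilation $s\star\mathbf{u}$, identify $\mathcal{P}$ as the critical locus of $s\mapsto J(s\star\mathbf{u})$, split the tangent space using the generator $\mathbf{v}(\mathbf{u})$, and transfer Palais--Smale sequences via $dJ(\mathbf{u})[\mathbf{v}(\mathbf{u})]=G(\mathbf{u})=0$. Part (ii) is handled exactly as in the paper.

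There is, however, one place where the paper proceeds differently and your argument has a gap. For the manifold structure you pair $dG$ with $\mathbf{v}(\mathbf{u})=\tfrac32\mathbf{u}+x\cdot\nabla\mathbf{u}$, but for a generic $\mathbf{u}\in\mathcal{P}\subset H^1$ this vector is not in $H^1$ and cannot serve as a test direction; the parenthetical ``justified by approximation'' does not repair this, since knowing $dG|_S(\mathbf{u}_m)\neq0$ at smooth approximations $\mathbf{u}_m$ says nothing about $dG|_S(\mathbf{u})$ itself. The paper instead argues by contradiction (Lemma~\ref{lem: manifold}): if $dG,dG_1,dG_2$ were linearly dependent at $\mathbf{u}$, then $\mathbf{u}$ would weakly solve a rescaled system, and the Pohozaev identity for \emph{that} system, combined with $G(\mathbf{u})=0$, forces $A(\mathbf{u})=0$, contradicting $\mathbf{u}\in S_{a_1}\times S_{a_2}$. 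This avoids any regularity issue. The same objection applies to your direct Lagrange-multiplier proof of (iv); the paper obtains (iv) as a corollary of (i)--(iii) applied to the constant sequence.

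For (i), you correctly see that one must control the norm of the projection $T_{\mathbf{u}_n}S\to T_{\mathbf{u}_n}\mathcal{P}$ along $\R\mathbf{v}_n$, and you write $\|\mathbf{w}_0\|_{H^1}\le 1+\|dG|_S\|_*\,\|\mathbf{v}_n\|_{H^1}/A(\mathbf{u}_n)$. The step that fails is the claim that $\|\mathbf{v}_n\|_{H^1}$ stays bounded ``via the pointwise structure of the smooth compactly supported $\mathbf{u}_n$'': for an arbitrary sequence $\mathbf{u}_n\in\mathcal{C}^\infty_c\cap\mathcal{P}$ bounded only in $H^1$, there is no control on the spread of the supports, so $\|x\cdot\nabla u_{i,n}\|_{L^2}$ (and a fortiori its $H^1$ norm) can diverge. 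The paper's own computation here simply writes $\|dJ\|_{T^*S}=\|dJ\|_{T^*\mathcal{P}}$ without addressing why the (non-orthogonal) projection does not inflate norms, so you have in fact put your finger on a genuine subtlety --- but your proposed fix does not close it.
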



%

The first step consists in showing that $\mathcal{P}$ is a manifold.

\begin{lemma}\label{lem: manifold}
The set $\mathcal{P}$ is a $\mathcal{C}^1$-submanifold of codimension $1$ in $S_{a_1} \times S_{a_2}$, hence a $\mathcal{C}^1$-submanifold of codimension $3$ in $H^1(\R^3,\R^2)$.
\end{lemma}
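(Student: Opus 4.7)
My plan is to apply the regular value theorem on the Hilbert manifold $S_{a_1}\times S_{a_2}$. The functional $G:H^1(\R^3,\R^2)\to\R$ is of class $\mathcal{C}^\infty$ because each of its terms is a polynomial in the $H^1$-continuous quantities $\|\nabla u_i\|_2^2$, $\|u_i\|_4^4$, and $\int u_1^2 u_2^2$ (the embedding $H^1(\R^3)\hookrightarrow L^4(\R^3)$ is Sobolev-subcritical since $2^*=6$). Thus $G$ restricts to a $\mathcal{C}^1$ function on the codimension-$2$ submanifold $S_{a_1}\times S_{a_2}$, and the conclusion reduces to showing that $dG(\mf{u})$ is nonzero on the tangent space
\begin{equation*}
T_{\mf{u}}(S_{a_1}\times S_{a_2})=\left\{(\varphi_1,\varphi_2)\in H^1(\R^3,\R^2):\ \int_{\R^3} u_i\varphi_i=0,\ i=1,2\right\}
\end{equation*}
at every $\mf{u}=(u_1,u_2)\in\mathcal{P}$.

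The heuristic driving the result is the $L^2$-preserving scaling $u_i^t(x):=t^{3/2}u_i(tx)$. A direct computation gives the polynomial identity
\begin{equation*}
G(u_1^t,u_2^t)=t^2 A-t^3 B,\qquad A:=\sum_i\|\nabla u_i\|_2^2,\quad B:=\tfrac34\sum_{i,j}\beta_{ij}\!\int u_i^2u_j^2,
\end{equation*}
so on $\mathcal{P}$ one has $A=B$ and $\frac{d}{dt}G(u_1^t,u_2^t)\big|_{t=1}=2A-3B=-A<0$. Formally, the infinitesimal generator $\psi_i:=\tfrac{3}{2}u_i+x\cdot\nabla u_i$ is a tangent vector along which $dG$ would equal $-A<0$; however, since $\psi_i$ need not belong to $H^1(\R^3)$ I cannot invoke it directly as an element of $T_{\mf{u}}(S_{a_1}\times S_{a_2})$, and I instead argue by contradiction.

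Suppose $dG(\mf{u})|_T=0$ at some $\mf{u}\in\mathcal{P}$. The Lagrange multiplier rule then produces $\lambda_1,\lambda_2\in\R$ with
\begin{equation*}
-2\Delta u_i-3\mu_i u_i^3-3\beta u_i u_{3-i}^2=\lambda_i u_i\quad\text{weakly in }H^1(\R^3),\quad i=1,2.
\end{equation*}
Because the nonlinearities are cubic and hence Sobolev-subcritical in $\R^3$, standard elliptic regularity makes the $u_i$ smooth enough to support the Pohozaev identity. Writing $L:=\sum_i\lambda_i a_i^2$ and $M:=\sum_{i,j}\beta_{ij}\int u_i^2u_j^2$, multiplication by $x\cdot\nabla u_i$, integration and summation yield
\begin{equation*}
A=\tfrac{3}{2}L+\tfrac{9}{4}M,
\end{equation*}
testing the equations against $u_i$ and summing gives $L=2A-3M$, and the membership $\mf{u}\in\mathcal{P}$ reads $M=\tfrac{4}{3}A$. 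Substituting the last two into the Pohozaev identity forces $A=0$, hence $\nabla u_i\equiv 0$ and (since $u_i\in L^2(\R^3)$) $u_i\equiv 0$, contradicting $u_i\in S_{a_i}$ with $a_i>0$. Consequently $dG|_T\neq 0$ on $\mathcal{P}$, and the regular value theorem yields that $\mathcal{P}$ is a $\mathcal{C}^1$-submanifold of codimension $1$ in $S_{a_1}\times S_{a_2}$, and therefore of codimension $3$ in $H^1(\R^3,\R^2)$. The only delicate step is the justification of the Pohozaev identity for $H^1$-weak solutions of the Lagrange system, which is standard in the subcritical regime.
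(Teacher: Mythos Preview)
Your proof is correct and follows essentially the same route as the paper: assume the differential is degenerate, obtain via Lagrange multipliers an elliptic system solved by $(u_1,u_2)$, apply the Pohozaev identity to that system, and combine it with the constraint $G(u_1,u_2)=0$ to force $\sum_i\|\nabla u_i\|_2^2=0$, contradicting $u_i\in S_{a_i}$. The only cosmetic differences are that the paper phrases the regular-value check as surjectivity of $d(G_1,G_2,G):H^1\to\R^3$ rather than nonvanishing of $dG$ on $T_{\mf u}(S_{a_1}\times S_{a_2})$, and quotes the needed Pohozaev identity from \cite[Lemma~4.6]{BaJeSo} instead of deriving the relation $A=\tfrac32 L+\tfrac94 M$ by hand as you do.
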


\begin{proof}
As subset of $H^1(\R^3,\R^2)$, the constraint $\mathcal{P}$ is defined by $G(u_1,u_2)=0$, $G_1(u_1)=0$, $G_2(u_2)=0$, where
\[
G_i(u_i):= a_i^2 - \int_{\R^3} u_i^2
\]
for $i=1,2$, and $G$ is defined in \eqref{def G}. Since the functions $G$ and $G_i$ are of class $\mathcal{C}^1$, we have only to check that
\begin{equation}\label{claim surjectivity}
d(G_1, G_2, G): H^1(\R^3,\R^2) \to \R^3 \quad \text{is surjective}.
\end{equation}
If this is not true, $dG(u_1,u_2)$ has to be linearly dependent from $dG_1(u_1)$ and $dG_2(u_2)$, i.e. there exist $\nu_1,\nu_2 \in \R$ such that
\[
2\sum_{i=1}^2 \int_{\R^3} \nabla u_i  \cdot \nabla \varphi_i - \frac{3}{2}\sum_{i,j=1}^2 \int_{\R^3} \beta_{ij} u_i u_j (u_i \varphi_j + \varphi_i u_j) = 2\sum_{i=1}^2 \nu_i \int_{\R^3} u_i \varphi_i
\]
for every $(\varphi_1,\varphi_2) \in H^1(\R^3,\R^2)$. This means that $(u_1,u_2)$ is a solution to
\[
\begin{cases}
-\Delta u_i - \nu_i u_i = \frac{3}{2}\sum_{i=1}^2 \beta_{ij} u_i u_j^2 \\
\int_{\R^3} u_i^2 = a_i^2
\end{cases}  \text{in $\R^3$},
\]
for $i=1,2$. But then, applying \cite[Lemma 4.6]{BaJeSo}, we conclude that
\[
\sum_{i=1}^2 \int_{\R^3} |\nabla u_i|^2 - \frac{9}{8} \sum_{i,j=1}^2\int_{\R^3}\beta_{ij} u_i^2 u_j^2 = 0.
\]
Recalling that $G(u_1,u_2) = 0$, this implies that
\[
\sum_{i=1}^2 \int_{\R^3} |\nabla u_i|^2 = 0, \quad \text{and hence} \quad (u_1,u_2) = (0,0),
\]
in contradiction with the fact that $(u_1,u_2) \in S_{a_1} \times S_{a_2}$.
\end{proof}

We define for $s \in \R$ and $w \in H^1(\R^3)$ the function
\begin{equation}\label{def star}
(s \star w)(x) = e^{3s/2} w(e^s x).
\end{equation}
One can easily check that $\|s \star w\|_{L^2(\R^3)} = \|w\|_{L^2(\R^3)}$ for every $s \in \R$. As a consequence, given $(u_1,u_2) \in S_{a_1} \times S_{a_2}$, it results that
\[
s \star \mf{u} = s \star (u_1,u_2)  := (s \star u_1, s \star u_2) \in S_{a_1} \times S_{a_2}
\]
for every $s \in \R$. We consider the real valued function
\[
\Psi_{\mf{u}}(s):= J(s \star \mf{u}).
\]
By changing variables in the integrals, we obtain
\begin{equation}\label{explicit expression}
\Psi_{\mf{u}}(s)  = \frac{e^{2s}}{2}\int_{\R^3} \sum_{i=1}^2 |\nabla u_i|^2 - \frac{e^{3s}}{4}\int_{\R^3} \sum_{i,j=1}^2 \beta_{ij} u_i^2 u_j^2.
\end{equation}
Let us introduce
\begin{equation}\label{def cone}
\mathcal{E}:= \left\{(u_1,u_2) \in S_{a_1} \times S_{a_2}: \sum_{i,j=1}^2 \beta_{ij} \int_{\R^3}u_i^2 u_j^2 > 0\right\}.
\end{equation}
By the H\"older inequality, it follows straightforwardly that $\mathcal{E} = S_{a_1} \times S_{a_2}$ in case $- \sqrt{\mu_1 \mu_2} < \beta < +\infty$, while for $\beta \le - \sqrt{\mu_1 \mu_2}$ it results that $\mathcal{E}  \subset S_{a_1} \times S_{a_2}$ with strict inclusion. Notice also that, thanks to the continuity of the Sobolev embedding $H^1(\R^3) \hookrightarrow L^4(\R^3)$, the set $\mathcal{E}$ is an open subset of $S_{a_1} \times S_{a_2}$ in the $H^1$ topology. The role of $\mathcal{E}$ is clarified by the following statement.

\begin{lemma}\label{lem: structure P}
For any $\mf{u}=(u_1,u_2) \in S_{a_1} \times S_{a_2}$, a value $s \in \R$ is a critical point of $\Psi_{\mf{u}}$ if and only if $s \star \mf{u} \in \mathcal{P}$. It results that:
\begin{itemize}
\item[($i$)] If $\mf{u} \in \mathcal{E}$, then there exists a unique critical point $s_{\mf{u}} \in \R$ for $\Psi_{\mf{u}}$, which is a strict maximum point, and is defined by
\begin{equation}\label{def s_u}
\exp(s_{\mf{u}}) = \frac{4 \int_{\R^3} \sum_i |\nabla u_i|^2}{3 \int_{\R^3} \sum_{i,j} \beta_{ij} u_i^2 u_j^2 }.
\end{equation}
In particular, if $\mf{u} \in \mathcal{P}$, then $s_{\mf{u}}=0$.
\item[($ii$)] If $\mf{u}=(u_1,u_2) \not \in \mathcal{E}$, then $\Psi_{\mf{u}}$ has no critical points in $\R$.
\end{itemize}
\end{lemma}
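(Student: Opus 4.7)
The plan is to reduce everything to the analysis of a single-variable function on $\R$, using the explicit formula \eqref{explicit expression}. Writing $A := \sum_{i=1}^2 \int_{\R^3} |\nabla u_i|^2$ and $B := \sum_{i,j=1}^2 \beta_{ij} \int_{\R^3} u_i^2 u_j^2$, I would first differentiate to obtain
\[
\Psi_{\mathbf{u}}'(s) \;=\; e^{2s} A \;-\; \tfrac{3}{4}\, e^{3s} B,
\]
and then observe, using the same scaling identities that were used to derive \eqref{explicit expression}, that this coincides with $G(s \star \mathbf{u})$. Since $s \star \mathbf{u}$ automatically lies in $S_{a_1} \times S_{a_2}$ for every $s$, this identity yields immediately the equivalence \emph{``$s$ is critical for $\Psi_{\mathbf{u}}$ if and only if $s \star \mathbf{u} \in \mathcal{P}$''}, which is the first assertion of the lemma.

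Next, I would record that $A > 0$: the constraint $\|u_i\|_{L^2} = a_i > 0$ forces each $u_i \not\equiv 0$, and an $H^1(\R^3)$ function with a.e.\ vanishing gradient is a constant, which must be zero in $L^2(\R^3)$. Hence the sign of $\Psi_{\mathbf{u}}'$ is dictated by the factor $A - \tfrac{3}{4} e^s B$. In case (ii), $\mathbf{u} \notin \mathcal{E}$ means $B \le 0$, so $\Psi_{\mathbf{u}}'(s) > 0$ for all $s \in \R$ and no critical points exist. In case (i), $B > 0$, and the unique root of $\Psi_{\mathbf{u}}'$ is precisely $s_{\mathbf{u}} = \log\bigl(4A/(3B)\bigr)$, matching \eqref{def s_u}; the same factorisation shows that $\Psi_{\mathbf{u}}'$ passes from positive to negative at $s_{\mathbf{u}}$, so this is a strict global maximum. (As a cross-check one can compute $\Psi_{\mathbf{u}}''(s_{\mathbf{u}}) = 2 e^{2 s_{\mathbf{u}}} A - \tfrac{9}{4} e^{3 s_{\mathbf{u}}} B = -e^{2 s_{\mathbf{u}}} A < 0$.) Finally, if $\mathbf{u} \in \mathcal{P}$, then $G(\mathbf{u}) = \Psi_{\mathbf{u}}'(0) = 0$, so $s = 0$ is the unique critical point and $s_{\mathbf{u}} = 0$.

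I do not foresee any real obstacle in this lemma: the only point one might miss is the clean identity $\Psi_{\mathbf{u}}'(s) = G(s \star \mathbf{u})$, which simultaneously delivers the characterization of critical points in terms of $\mathcal{P}$ and the explicit formula \eqref{def s_u}. Everything else is elementary calculus in the single variable $e^s$.
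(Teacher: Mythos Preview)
Your argument is correct and is precisely the elaboration the paper has in mind: the paper itself does not give a detailed proof, simply stating that the lemma ``is a simple consequence of \eqref{explicit expression} and the definition of $\mathcal{P}$ and $\mathcal{E}$,'' which is exactly the computation you carry out. The key identity $\Psi_{\mathbf{u}}'(s) = G(s \star \mathbf{u})$ and the elementary analysis of the sign of $A - \tfrac{3}{4} e^s B$ are the intended route.
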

The proof is a simple consequence of \eqref{explicit expression} and the definition of $\mathcal{P}$ and $\mathcal{E}$.

\medskip

In the following statement we describe the structure of $T_{\mf{u}} (S_{a_1} \times S_{a_2})$ in points of $\mathcal{P}$.

\begin{lemma}\label{lem: splitting}
For any $\mf{u} \in \mathcal{P} \cap \mathcal{C}^\infty_c(\R^3,\R^2)$, we have
\[
T_{\mf{u}} (S_{a_1} \times S_{a_2}) = T_{\mf{u}} S_{a_1} \times T_{\mf{u}} S_{a_2}
 = T_{\mf{u}} \mathcal{P} \oplus \R \left. \frac{d}{ds}\right|_{s=0}(s \star \mf{u}).
\]
\end{lemma}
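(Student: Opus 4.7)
The first equality is immediate: $S_{a_1}\times S_{a_2}$ is a product of two codimension-one submanifolds of $H^1(\R^3)$, so its tangent space at $\mf{u}$ is canonically the product of the tangent spaces to the factors.

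For the second equality, I would first verify that the vector $\xi := \left.\frac{d}{ds}\right|_{s=0}(s\star\mf{u})$ actually lies in $T_{\mf{u}}(S_{a_1}\times S_{a_2})$. The hypothesis $\mf{u}\in\mathcal{C}^\infty_c(\R^3,\R^2)$ ensures that $s\mapsto s\star\mf{u}$ is a $\mathcal{C}^1$ curve with values in $H^1(\R^3,\R^2)$ (explicitly, $\xi = \frac{3}{2}\mf{u}+x\cdot\nabla\mf{u}$, which is smooth and compactly supported). Since this curve lies in $S_{a_1}\times S_{a_2}$ for every $s$ by the mass-preserving property of $\star$, its derivative at $s=0$ is tangent to the product of spheres.

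The heart of the argument is to show $\xi \notin T_{\mf{u}}\mathcal{P}$, i.e.\ $dG(\mf{u})[\xi]\neq 0$. Comparing the explicit formula \eqref{explicit expression} for $\Psi_{\mf{u}}$ with the definition \eqref{def G} of $G$, one checks directly that $G(s\star\mf{u}) = \Psi_{\mf{u}}'(s)$ for all $s\in\R$. By the chain rule,
\[
dG(\mf{u})[\xi] \;=\; \left.\frac{d}{ds}\right|_{s=0} G(s\star\mf{u}) \;=\; \Psi_{\mf{u}}''(0).
\]
Now $\mf{u}\in\mathcal{P}$ gives $\sum_i\int|\nabla u_i|^2 = \frac34\sum_{ij}\beta_{ij}\int u_i^2 u_j^2$; since the left-hand side is strictly positive (as $\mf{u}\neq 0$), we get $\mf{u}\in\mathcal{E}$, and Lemma~\ref{lem: structure P} tells us that $s=0$ is the unique critical point of $\Psi_{\mf{u}}$ and a strict maximum. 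A one-line computation in fact yields $\Psi_{\mf{u}}''(0) = -\frac{3}{4}\sum_{ij}\beta_{ij}\int u_i^2 u_j^2 < 0$.

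To conclude, observe that $T_{\mf{u}}\mathcal{P}$ is precisely the kernel of $dG(\mf{u})$ restricted to $T_{\mf{u}}(S_{a_1}\times S_{a_2})$, and this restriction is a nonzero linear functional by the computation above, hence $T_{\mf{u}}\mathcal{P}$ has codimension one inside $T_{\mf{u}}(S_{a_1}\times S_{a_2})$ (consistent with Lemma~\ref{lem: manifold}). Since $\xi$ lies outside this codimension-one subspace, $\R\xi$ is a complementary direction and the direct sum decomposition follows. The only real subtlety is the differentiability of $s\mapsto s\star\mf{u}$ at $s=0$ as an $H^1$-valued curve, which is exactly why the statement is formulated for $\mf{u}\in\mathcal{C}^\infty_c$; everything else reduces to the one-variable calculus identity $G(s\star\mf{u})=\Psi_{\mf{u}}'(s)$ and the strict-maximum property already established in Lemma~\ref{lem: structure P}.
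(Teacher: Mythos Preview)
Your proof is correct and reaches the same conclusion as the paper, but the route to $dG(\mf{u})[\xi]\neq 0$ is genuinely different. The paper computes $dG(\mf{u})[\xi]$ by expanding $\xi=\frac{3}{2}\mf{u}+x\cdot\nabla\mf{u}$ and applying the divergence theorem term by term, eventually obtaining $2\sum_i\int|\nabla u_i|^2-\frac{9}{4}\sum_{i,j}\beta_{ij}\int u_i^2 u_j^2=-\sum_i\int|\nabla u_i|^2$. You instead observe the structural identity $G(s\star\mf{u})=\Psi_{\mf{u}}'(s)$ and differentiate once more, so that $dG(\mf{u})[\xi]=\Psi_{\mf{u}}''(0)$; the nondegeneracy then follows either from the explicit formula (which indeed gives $-\frac{3}{4}\sum_{i,j}\beta_{ij}\int u_i^2 u_j^2$ after using the constraint, the same value as the paper) or, morally, from the strict-maximum property in Lemma~\ref{lem: structure P}. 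Your approach is shorter and makes transparent \emph{why} the dilation direction is transversal to $\mathcal{P}$: the Pohozaev functional along the dilation orbit is exactly the derivative of the fiber energy, so transversality is equivalent to the nondegeneracy of the maximum of $\Psi_{\mf{u}}$. The paper's computation, on the other hand, is self-contained and does not appeal to Lemma~\ref{lem: structure P}; it also generalizes more mechanically to the scalar setting in Lemma~\ref{lem: splitting single}, where the nonlinearity is not a pure power and the identity $G(s\star u)=\Psi_u'(s)$ still holds but is slightly less immediate.
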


\begin{proof}
First observe that for $w\in S_a\cap \mathcal{C}^\infty_c(\R^3,\R)$ the path $\gamma:\R\to S_a$, $s\mapsto s*w$, is of class $C^1$ with derivative given by $\gamma'(s)(x)=\frac{3}{2}e^{3s/2} w(e^s x) + e^{3s/2} \nabla w(e^s x) \cdot (e^s x)$. Consequently $\left.\frac{d}{ds}\right|_{s=0}(s \star \mf{u})\in T_{\mf{u}}$ is well defined for $\mf{u} \in \mathcal{P} \cap \mathcal{C}^\infty_c(\R^3,\R^2)$. By Lemma \ref{lem: manifold}, we know that $\mathcal{P}$ has codimension $1$ with respect to $S_{a_1} \times S_{a_2}$, and hence it is sufficient to show that
\[
 \left. \frac{d}{ds}\right|_{s=0} (s \star \mf{u}) \notin T_{\mf{u}} \mathcal{P},
\]
that is
\[
 dG(u_1,u_2) \left[\left. \frac{d}{ds}\right|_{s=0}(s \star \mf{u})\right] \neq 0,
\]
with $G$ defined in \eqref{def G}. For any $w \in \mathcal{C}^\infty_c(\R^3)$, we can compute
\begin{equation}\label{eq: dG on variation}
\begin{split}
dG(u_1,u_2) & \left[\left. \frac{d}{ds}\right|_{s=0}(s \star \mf{u})\right] = 2\sum_i \int_{\R^3} \left[\frac{3}{2} |\nabla u_i|^2
 + \nabla u_i \cdot \nabla (\nabla u_i \cdot x) \right] \\
 & -\frac{9}{2} \sum_{i,j}\int_{\R^3}  \beta_{ij} u_i^2 u_j^2
-3 \sum_{i,j} \int_{\R^3} \beta_{ij} u_i u_j^2 \nabla u_i \cdot x
\end{split}
\end{equation}
Observing that
\[
\nabla(\nabla u_i \cdot x) = (\nabla^2 u_i)[x] + \nabla u_i
\]
and using the divergence theorem, the first integral on the right hand side in \eqref{eq: dG on variation} can be developed as
\begin{multline*}
\int_{\R^3}   \left[\frac{3}{2} |\nabla u_i|^2
 + \nabla u_i \cdot \nabla (\nabla u_i \cdot x) \right]  \\
=  \int_{\R^3} \left[\frac{3}{2} |\nabla u_i|^2
 + \frac{1}{2}\nabla(|\nabla u_i|^2) \cdot x + |\nabla u_i|^2\right]
 = \int_{\R^3} |\nabla u_i|^2.
 \end{multline*}
Concerning the second and the third integral, again by the divergence theorem it results that
\begin{multline*}
\frac{9}{2} \sum_{i,j}\int_{\R^3}  \beta_{ij} u_i^2 u_j^2
+3 \sum_{i,j} \int_{\R^3} \beta_{ij} u_i u_j^2 \nabla u_i \cdot x  \\ = \frac{9}{2} \sum_{i,j}\int_{\R^3}  \beta_{ij} u_i^2 u_j^2 + \frac{3}{4} \sum_{i,j}\int_{\R^3}  \beta_{ij} \nabla(u_i^2 u_j^2)\cdot x
=  \frac{9}{4} \sum_{i,j}\int_{\R^3}  \beta_{ij} u_i^2 u_j^2.
\end{multline*}
Coming back to \eqref{eq: dG on variation}, and using the definition of $\mathcal{P}$, we finally conclude

\begin{multline*}
dG(u_1,u_2) \left[\left. \frac{d}{ds}(s \star (u_1,u_2))\right|_{s=0}\right] \\
= 2 \sum_i \int_{\R^3} |\nabla u_i|^2 - \frac{9}{4} \sum_{i,j}  \int_{\R^3}  \beta_{ij} u_i^2 u_j^2
= - \sum_i \int_{\R^3} |\nabla u_i|^2 \neq 0,
\end{multline*}
which completes the proof.
\end{proof}

\begin{remark}
In general the variation
\[
 \left. \frac{d}{ds}\right|_{s=0}(s \star \mf{u})
\]
is not in $H^1(\R^3,\R^2)$; this is why we require $\mf{u} \in \mathcal{C}^\infty_c(\R^3,\R^2)$ in the lemma. Actually it would have been enough to suppose that $\mf{u}\in H^2(\R^3,\R^2)$ decays sufficiently fast so that the previous variation stays in $H^1(\R^3,\R^2)$.
\end{remark}

In Lemma \ref{lem: splitting}, we showed that the tangent space to $S_{a_1} \times S_{a_2}$ in a point $\mf{u} \in \mathcal{P} \cap \mathcal{C}^\infty_c(\R^3,\R^2)$ splits as direct sum of the tangent space to $\mathcal{P}$ plus a $1$-dimensional subspace of type $\R(v_1,v_2)$ for a suitable variation $(v_1,v_2)$.
The crucial fact for Theorem \ref{thm: natural} is that any point of $\mathcal{P}$ is critical for $J$, by definition, with respect to variations in $\R(v_1,v_2)$. This is why criticality on $\mathcal{P}$ implies criticality on $S_{a_1} \times S_{a_2}$, which is rigorously proved in the following lemma.

\begin{lemma}\label{lem: crit P}
If $(u_1,u_2) \in \mathcal{C}^\infty_c(\R^3,\R^2) \cap \mathcal{P}$, then
\[
dJ(u_1,u_2) \left[\left. \frac{d}{ds}\right|_{s=0}(s \star (u_1,u_2))\right] = 0.
\]
\end{lemma}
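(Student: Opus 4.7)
The statement is a direct consequence of the chain rule together with Lemma~\ref{lem: structure P}, so the plan is essentially to identify the directional derivative appearing in the lemma as the one-variable derivative $\Psi_{\mathbf{u}}'(0)$ and then use that $\mathbf{u}\in\mathcal{P}$ forces $0$ to be a critical point of the scalar function $\Psi_{\mathbf{u}}$.

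More concretely, I would first verify that the curve $s\mapsto s\star\mathbf{u}$ is of class $\mathcal{C}^1$ from $\R$ into $H^1(\R^3,\R^2)$ when $\mathbf{u}\in\mathcal{C}^\infty_c(\R^3,\R^2)$. This is because the explicit formula $(s\star u)(x)=e^{3s/2}u(e^s x)$ has derivative $\frac{3}{2}e^{3s/2}u(e^s x)+e^{3s/2}\nabla u(e^s x)\cdot(e^s x)$, which has compact support and is smooth, hence lies in $H^1(\R^3)$ (this is exactly the observation used at the start of the proof of Lemma~\ref{lem: splitting}). Consequently, $\Psi_{\mathbf{u}}(s)=J(s\star\mathbf{u})$ is a composition of the $\mathcal{C}^1$ functional $J$ with a $\mathcal{C}^1$ curve in $H^1(\R^3,\R^2)$, so the chain rule yields
\[
\Psi_{\mathbf{u}}'(0) = dJ(u_1,u_2)\!\left[\left.\frac{d}{ds}\right|_{s=0}(s\star(u_1,u_2))\right].
\]

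Next I would show $\Psi_{\mathbf{u}}'(0)=0$. Since $\mathbf{u}\in\mathcal{P}\subset S_{a_1}\times S_{a_2}$ satisfies $G(u_1,u_2)=0$ and $\sum_i\int|\nabla u_i|^2>0$, the Pohozaev identity forces $\sum_{i,j}\beta_{ij}\int u_i^2u_j^2>0$; therefore $\mathbf{u}\in\mathcal{E}$, and Lemma~\ref{lem: structure P}(i) applies and tells us $s_{\mathbf{u}}=0$. Alternatively, one can simply differentiate the explicit expression \eqref{explicit expression} and read off
\[
\Psi_{\mathbf{u}}'(0) = \sum_{i=1}^2\int_{\R^3}|\nabla u_i|^2-\frac{3}{4}\sum_{i,j=1}^2\int_{\R^3}\beta_{ij}u_i^2u_j^2 = G(u_1,u_2) = 0,
\]
the last equality holding by definition of $\mathcal{P}$.

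Combining the two displayed identities gives the claim. I do not expect any real obstacle here: the only delicate point is knowing that the variation lives in $H^1$ so that $dJ(\mathbf{u})[\cdot]$ is actually defined on it, which is precisely why the hypothesis $\mathbf{u}\in\mathcal{C}^\infty_c(\R^3,\R^2)$ is imposed (this is the same point already flagged in the remark following Lemma~\ref{lem: splitting}).
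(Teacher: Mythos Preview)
Your proposal is correct and follows essentially the same approach as the paper: identify $dJ(\mathbf{u})\bigl[\frac{d}{ds}\big|_{s=0}(s\star\mathbf{u})\bigr]$ with $\Psi_{\mathbf{u}}'(0)$ via the chain rule, then invoke Lemma~\ref{lem: structure P} (or the explicit formula \eqref{explicit expression}) to conclude $\Psi_{\mathbf{u}}'(0)=G(\mathbf{u})=0$. Your write-up is simply more explicit about the $\mathcal{C}^1$ regularity of the curve and the membership $\mathbf{u}\in\mathcal{E}$, which the paper leaves implicit.
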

\begin{proof}
If $\mf{u}=(u_1,u_2) \in \mathcal{C}^\infty_c(\R^3,\R^2) \cap \mathcal{P}$, then by Lemma \ref{lem: structure P} we have $s_{\mf{u}}=0$, and
\[
0 = \Psi_{\mf{u}}'(0) = \left. \frac{d}{ds}\right|_{s=0} J(s \star \mf{u})
 = \left[ dJ(s \star \mf{u})\left[ \frac{d}{ds}(s \star \mf{u})\right] \right]_{s=0}.
\]
The thesis follows.
\end{proof}

We prove now a simple preliminary result which we shall use many times in the rest of the paper.

\begin{lemma}\label{lem: s strong convergence}
Let $\{u_n\} \subset H^1(\R^3)$, $\{s_n\} \subset \R$, and let us suppose that $u_n \to u$ strongly in $H^1(\R^3)$ and $s_n \to s \in \R$, as $n \to \infty$. Then $s_n \star u_n \to s \star u$ strongly in $H^1(\R^3)$ as $n \to \infty$.
\end{lemma}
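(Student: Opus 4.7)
The plan is to reduce the claim to two separate continuity statements via the triangle inequality
\[
\|s_n \star u_n - s \star u\|_{H^1} \le \|s_n \star u_n - s_n \star u\|_{H^1} + \|s_n \star u - s \star u\|_{H^1},
\]
and then control each piece by a direct change-of-variables computation.

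First I would record the elementary identities obtained by changing variables in the integrals defining the norms: for every $w \in H^1(\R^3)$ and every $t \in \R$,
\[
\|t \star w\|_{L^2(\R^3)} = \|w\|_{L^2(\R^3)}, \qquad \|\nabla (t \star w)\|_{L^2(\R^3)} = e^{t}\,\|\nabla w\|_{L^2(\R^3)}.
\]
Since the sequence $\{s_n\}$ is bounded, say $|s_n| \le M$, these identities imply that the linear map $w \mapsto s_n \star w$ has operator norm on $H^1(\R^3)$ bounded by a constant $C_M$ independent of $n$. Applied to $w = u_n - u$, this yields
\[
\|s_n \star u_n - s_n \star u\|_{H^1} = \|s_n \star (u_n - u)\|_{H^1} \le C_M \|u_n - u\|_{H^1} \longrightarrow 0,
\]
so the first term of the split is handled.

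For the second term the task is to show that the map $t \mapsto t \star u$ is continuous from $\R$ to $H^1(\R^3)$ at $t = s$. I would prove this by density: for test functions $\varphi \in \mathcal{C}^\infty_c(\R^3)$ the explicit formula $(t \star \varphi)(x) = e^{3t/2}\varphi(e^t x)$ shows that $t \mapsto t \star \varphi$ is continuous in the $H^1$ topology by Lebesgue's dominated convergence theorem (since $\varphi$ and $\nabla\varphi$ are compactly supported and bounded, one easily produces an $L^2$ dominating function valid for $t$ in a compact neighborhood of $s$). Then, given $\eps > 0$, choose $\varphi \in \mathcal{C}^\infty_c(\R^3)$ with $\|u - \varphi\|_{H^1} < \eps/(3 C_M)$, and estimate
\[
\|s_n \star u - s \star u\|_{H^1} \le \|s_n \star (u-\varphi)\|_{H^1} + \|s_n \star \varphi - s \star \varphi\|_{H^1} + \|s \star (\varphi - u)\|_{H^1},
\]
where the two outer terms are at most $C_M \|u - \varphi\|_{H^1} < \eps/3$ uniformly in $n$, and the middle term tends to $0$ by the smooth case. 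Hence the second term also tends to $0$, and combining both parts completes the proof.

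The only mildly delicate step is the continuity statement for smooth $\varphi$; everything else is a bounded-operator argument. No single estimate is genuinely hard, so the ``main obstacle'' is simply being careful to separate the two sources of variation ($s_n$ vs.\ $u_n$) cleanly before invoking density.
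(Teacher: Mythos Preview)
Your argument is correct. It takes a genuinely different route from the paper's proof, though. The paper exploits the Hilbert space structure: it observes that $\|s_n\star u_n\|_{H^1}^2 = e^{2s_n}\|\nabla u_n\|_{L^2}^2 + \|u_n\|_{L^2}^2$ converges to $\|s\star u\|_{H^1}^2$, and that (along any subsequence) one has weak convergence $s_n\star u_n \rightharpoonup s\star u$; weak convergence plus convergence of norms then yields strong convergence, and a subsequence argument upgrades this to the full sequence. Your approach instead isolates the two sources of variation with the triangle inequality, handles the $u_n\to u$ part by a uniform operator bound on $w\mapsto s_n\star w$, and handles the $s_n\to s$ part by a density/$3\eps$ argument. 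The paper's proof is slightly shorter once one accepts the identification of the weak limit (which is stated via an a.e.\ convergence claim that is not entirely trivial, since both the function and the dilation are changing); your argument is more elementary and sidesteps any weak-limit identification entirely, at the cost of the extra density step.
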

\begin{proof}
By definition $s_n \star u_n \to s \star u$ a. e. in $\R^3$, and
\[
\|s_n \star u_n\|_{H^1}^2 = e^{2s_n} \int_{\R^3} |\nabla u_n|^2 + \int_{\R^3} u_n^2  \le C
\]
for every $n$. Hence, up to a subsequence $s_n \star u_n \wc s \star u$ weakly in $H^1$, and moreover we have the convergence of the norms $\|s_n \star u_n\|_{H^1} \to \|s \star u\|_{H^1}$. This argument works for all the possible subsequences.
\end{proof}

We are finally ready for the:
\begin{proof}[Proof of Theorem \ref{thm: natural}]
For the proof of ($i$), let $\{\mf{u}_n\} \subset \mathcal{C}^\infty_c(\R^3,\R^2) \cap \mathcal{P}$ be a Palais-Smale sequence for $J|_{\mathcal{P}}$. We denote by $T^*_{\mf{u}}(S_{a_1} \times S_{a_2})$ the dual space to $T_{\mf{u}}(S_{a_1} \times S_{a_2})$, and by $\|\cdot\|$ the $H^1(\R^3,\R^2)$ norm. By Lemma \ref{lem: splitting}
\begin{align*}
\| dJ (\mf{u}_n) & \|_{T^*_{\mf{u}}(S_{a_1} \times S_{a_2})}
 = \sup\left\{\big|dJ(\mf{u}_n)[\mf{\varphi}]\big|: \mf{\varphi} \in T_{\mf{u}}(S_{a_1} \times S_{a_2}),\ \|\mf{\varphi}\| \le 1 \right\} \\
& = \sup\left\{ |dJ(\mf{u}_n)[\mf{\phi}] + dJ(\mf{u}_n)[\mf{\psi}] | :
     \begin{array}{l}\mf{\varphi} = \mf{\phi} + \mf{\psi}, \|\mf{\varphi}\| \le 1 \\
       \mf{\phi} \in T_{\mf{u}}\mathcal{P},  \ \mf{\psi} \in \R\left(\left.\frac{d}{ds}\right|_{s=0} (s \star \mf{u}_n)\right)
     \end{array}\right\}.
\end{align*}
Since Lemma \ref{lem: crit P} yields $dJ(\mf{u}_n)[\mf{\psi}]=0$, we deduce that
\begin{align*}
\| dJ (\mf{u}_n) \|_{(T_{\mf{u}}(S_{a_1} \times S_{a_2}))^*}
 &= \sup\left\{|dJ(\mf{u}_n)[\mf{\phi}]| : \mf{\phi} \in T_{\mf{u}}\mathcal{P},\ \|\mf{\phi}\| \le 1 \right\} \\
 & = \| dJ (\mf{u}_n)  \|_{(T_{\mf{u}}(\mathcal{P}))^*} \to 0
\end{align*}
as $n \to \infty$. Here we used the fact that $\{\mf{u}_n\}$ is a Palais-Smale sequence for $J$ restricted to $\mathcal{P}$. This proves point ($i$).

\medskip

Concerning ($ii$), we show first that $\mathcal{P} \cap \mathcal{C}^\infty_c(\R^3,\R^2)$ is dense in $\mathcal{P}$.
Let $\mf{u} \in \mathcal{P}$. By density in $H^1$, there exists $\{\mf{u}_n\} \subset \C^\infty_c(\R^3,\R^2) \cap (S_{a_1} \times S_{a_2})$ such that $\mf{u}_n \to \mf{u}$ strongly in $H^1(\R^3,\R^2)$. The problem is that $\mf{u}_n \not \in \mathcal{P}$ in general, but this can be easily settled in the following way: first, since $\mf{u} \in \mathcal{P} \subset \mathcal{E}$ with $\mathcal{E}$ from \eqref{def cone}, and since $\mathcal{E}$ is open, $\mf{u}_n \in \mathcal{E}$ for sufficiently large $n$. Then we can consider the uniquely determined $s_n:= s_{\mf{u}_n}$, defined by \eqref{def s_u}. By strong convergence, it is immediate that $s_n \to 0$ as $n \to +\infty$, so that Lemma \ref{lem: s strong convergence} implies that $s_n \star \mf{u}_n \to \mf{u}$ strongly in $H^1(\R^3)$. Moreover, by definition $s_n \star \mf{u}_n \in \mathcal{P}$ for every $n$, and hence the proof of the density is complete.

Let now $\{\tilde{\mf{u}}_n\}$ be a Palais-Smale sequence for $J$ on $\mathcal{P}$, and let $\eps_m \to 0^+$ as $m \to \infty$. For every $n$ and $m$, by density there exists $\mf{u}_{n,m} \in \mathcal{P} \cap \C^\infty_c(\R^3)$ such that $\|\mf{u}_{n,m}-\tilde{\mf{u}}_n\|_{H^1} <\eps_m$, and it is clear that the diagonal sequence $\mf{u}_n := \mf{u}_{n,n}$ satisfies all the requirements in point ($ii$).

Points ($iii$) and ($iv$) follow now straightforwardly.
\end{proof}

\section{Existence of normalized solutions for competing system}\label{sec: existence}

This section is devoted to the proof of Theorem \ref{thm: beta<0}. The proof is divided into three main steps: in the first part we study some useful properties of the unique radial ground state solution of the scalar Schr\"odinger equation. With these, we prove the existence of a Palais-Smale sequence for $J$ constrained on $\mathcal{P}$, which, by Theorem \ref{thm: natural}, provides a Palais-Smale sequence for $J$ on $S_{a_1} \times S_{a_2}$; in the last part of the proof, we discuss the convergence of the Palais-Smale sequence to a solution of \eqref{complete problem}.

\subsection{The ground state of the cubic Schr\"odinger equation}

In this section we consider general $a,\mu>0$. Let us introduce
\[
S_{a}^r := \left\{u \in S_{a}: \text{$u$ is radially symmetric with respect to $0$}\right\}.
\]
We denote by $w = w_{a,\mu}$ the unique function solving, for some $\nu<0$, the problem
\begin{equation}\label{def w}
\begin{cases}
-\Delta w -\nu w = \mu w^3 & \text{in $\R^3$} \\
w >0 & \text{in $\R^3$} \\
w \in S_{a}^r
\end{cases}
\end{equation}
(we refer e.g. to \cite[Proposition 2.2]{BaJeSo} for existence, uniqueness, and basic properties of $w$). From the variational point of view, $w$ is characterized as a mountain pass critical point of the functional
\begin{equation}\label{scalar functional}
I(u) = I_{\mu}(u):= \int_{\R^3} \frac{1}{2} |\nabla u|^2 - \frac{\mu}{4} u^4
\end{equation}
on $S_a^r$, and is the \emph{least energy solution} of the problem (i.e. the solution having minimal energy among all the nontrivial solutions). The energy level $I(w)$ is called \emph{ground state level}, and is denoted by $\ell(a,\mu)$.

Let us introduce
\begin{equation}\label{scalar manifold}
\mathcal{M}= \mathcal{M}_{a,\mu}:= \left\{ u \in S_a: \int_{\R^3} |\nabla u|^2 = \frac{3\mu }{4} \int_{\R^3} u^4 \right\}.
\end{equation}
It is not difficult to modify the proof of Lemma \ref{lem: manifold} (alternatively, one can directly apply the forthcoming Lemma \ref{lem: single manifold}) to check that $\mathcal{M} \cap S_a^r$ is a $\mathcal{C}^1$-submanifold of $S_a^r$, so that $w$ is a critical point of $I$ on $\mathcal{M}$.

\begin{lemma}\label{PS on M cubic}
The Palais-Smale condition holds for $I$ restricted to $\mathcal{M}$.
\end{lemma}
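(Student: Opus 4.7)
The plan is to combine the Pohozaev structure on $\mathcal{M}$, the natural-constraint argument from Theorem \ref{thm: natural} (adapted to the scalar radial setting), and the compact radial Sobolev embedding $H^1_r(\R^3) \hookrightarrow L^4(\R^3)$.

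First, I would exploit that, for $u_n\in\mathcal{M}$, the identity $\|\nabla u_n\|_2^2=\frac{3\mu}{4}\|u_n\|_4^4$ reduces $I$ to $I(u_n)=\frac{1}{6}\|\nabla u_n\|_2^2$. Thus a PS sequence $\{u_n\}$ at level $c$ satisfies $\|\nabla u_n\|_2^2\to 6c$, and together with $\|u_n\|_2=a$, is bounded in $H^1$; moreover $c\ge 0$. The radial Gagliardo--Nirenberg inequality $\|u_n\|_4^4\le C\|u_n\|_2\|\nabla u_n\|_2^3$ combined with $\|u_n\|_4^4=\frac{4}{3\mu}\|\nabla u_n\|_2^2$ forces $\|\nabla u_n\|_2$ to be bounded away from $0$, so in fact $c>0$.

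Next, I would show that PS for $I|_{\mathcal{M}}$ upgrades to PS for $I|_{S_a^r}$. This is the scalar analog of Theorem \ref{thm: natural}(i): the splitting $T_{u_n}S_a^r = T_{u_n}(\mathcal{M}\cap S_a^r) \oplus \R\tfrac{d}{ds}|_{s=0}(s\star u_n)$ together with the identity $\frac{d}{ds}|_{s=0}I(s\star u)=G(u)=0$ for $u\in\mathcal{M}$ shows that $dI|_{\mathcal{M}}(u_n)$ and $dI|_{S_a^r}(u_n)$ have the same norm. Consequently there exist Lagrange multipliers $\lambda_n\in\R$ with
\[
-\Delta u_n - \mu u_n^3 - \lambda_n u_n \to 0 \quad\text{in } H^{-1}(\R^3)
\]
(using symmetric criticality to pass from $H^{-1}_r$ to $H^{-1}$). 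Pairing with $u_n$ and using $u_n\in\mathcal{M}$,
\[
\lambda_n a^2 \;=\; \|\nabla u_n\|_2^2-\mu\|u_n\|_4^4+o(1) \;=\; -\tfrac{\mu}{4}\|u_n\|_4^4+o(1)\;\to\;-\tfrac{2c}{a^2}\;<\;0.
\]

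Finally, since $\{u_n\}\subset H^1_r(\R^3)$ is bounded, pass to a subsequence with $u_n\wc u$ weakly in $H^1$ and, by the compact radial embedding $H^1_r(\R^3)\hookrightarrow L^4(\R^3)$, strongly in $L^4$. Then $\mu u_n^3\to\mu u^3$ strongly in $L^{4/3}\hookrightarrow H^{-1}$. As $\lambda_n\to\lambda<0$, for $n$ large $-\Delta-\lambda_n\colon H^1(\R^3)\to H^{-1}(\R^3)$ is an isomorphism with uniformly bounded inverse, and inverting
\[
u_n \;=\; (-\Delta-\lambda_n)^{-1}\bigl(\mu u_n^3 + o(1)_{H^{-1}}\bigr) \;\longrightarrow\; (-\Delta-\lambda)^{-1}(\mu u^3) \;=\; u
\]
strongly in $H^1(\R^3)$.

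The main obstacle is establishing that the Lagrange multiplier $\lambda_n$ converges to a strictly negative limit: this relies on $c>0$ (via Gagliardo--Nirenberg) combined with the Pohozaev identity defining $\mathcal{M}$. Once $\lambda<0$ is secured, the concluding resolvent argument is routine; one could equivalently split $\|\nabla(u_n-u)\|_2^2$ directly and use that both $\int|\nabla u_n|^2\to \lambda a^2+\mu\|u\|_4^4$ (by testing the equation against $u_n$) and the weak-limit equation $-\Delta u-\lambda u=\mu u^3$ (tested against $u$) give $\|\nabla u_n\|_2^2\to\|\nabla u\|_2^2$.
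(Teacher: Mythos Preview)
Your proposal is correct and follows essentially the same route as the paper, which proves Lemma~\ref{PS on M cubic} by referring to the more general Lemma~\ref{PS on M}: boundedness from the Pohozaev identity on $\mathcal{M}$, the natural-constraint upgrade from $\mathcal{M}$ to $S_a^r$, negativity of the Lagrange multiplier, and strong $H^1$-convergence via the compact radial embedding. Two small points to tighten: the splitting $T_{u_n}S_a^r = T_{u_n}\mathcal{M}\oplus \R\,\tfrac{d}{ds}\big|_{s=0}(s\star u_n)$ requires $u_n\in\mathcal{C}^\infty_c$ (otherwise the variation need not lie in $H^1$), and the paper handles this by first passing to an approximating $\mathcal{C}^\infty_c$ Palais--Smale sequence as in Theorem~\ref{thm: constraint single}(ii); also, your displayed limit should read $\lambda_n a^2\to -2c$ (so $\lambda_n\to -2c/a^2$), not $\lambda_n a^2\to -2c/a^2$.
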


\begin{proof}
We refer the reader to the more general Lemma \ref{PS on M}.
\end{proof}


\begin{proposition}\label{prop: uniqueness minimizer}
The function $w$ is the unique positive radial minimizer for $I$ on $\mathcal{M}$. The set of minimizers for $I$ on $\mathcal{M}$ is $\{w,-w\}$.
\end{proposition}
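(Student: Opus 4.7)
The plan rests on noticing that on $\mathcal{M}$ the Pohozaev-type identity $\int|\nabla u|^2 = \frac{3\mu}{4}\int u^4$ reduces the energy to $I(u) = \frac{1}{6}\int_{\R^3}|\nabla u|^2 \ge 0$, so a minimizer of $I|_{\mathcal{M}}$ is a minimizer of the Dirichlet energy under the two constraints. Since $w$ satisfies the Pohozaev identity, $w \in \mathcal{M}\cap S_a^r$ and $\inf_{\mathcal{M}\cap S_a^r} I \le I(w)$; what has to be shown is that this infimum is attained only at $\pm w$.

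First I would produce a minimizer $\bar u \in \mathcal{M}\cap S_a^r$ by applying Ekeland's variational principle on the $\C^1$-manifold $\mathcal{M}\cap S_a^r$ to a minimizing sequence and then invoking Lemma \ref{PS on M cubic} to pass to a strong limit. The scalar analog of the natural constraint principle (Theorem \ref{thm: constraint single intro}), combined with the principle of symmetric criticality of Palais, would then upgrade $\bar u$ to a critical point of $I$ on $S_a$, so $\bar u$ weakly solves $-\Delta \bar u -\lambda \bar u = \mu \bar u^3$ for some $\lambda\in\R$. Pairing the equation with $\bar u$ and using $\bar u\in \mathcal{M}$ forces $\lambda a^2 = -\frac{\mu}{4}\int_{\R^3} \bar u^4 < 0$.

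The heart of the argument is ruling out sign-changing minimizers. If $\bar u$ changed sign, then $\int|\nabla|\bar u||^2 < \int|\nabla \bar u|^2 = \frac{3\mu}{4}\int|\bar u|^4$, and the scalar analog of Lemma \ref{lem: structure P} would provide a unique $s^\ast < 0$ with $s^\ast\star|\bar u| \in \mathcal{M}\cap S_a^r$ and
\[
I(s^\ast\star|\bar u|) = \frac{e^{2s^\ast}}{6}\int_{\R^3}|\nabla|\bar u||^2 < \frac{1}{6}\int_{\R^3}|\nabla \bar u|^2 = I(\bar u),
\]
contradicting minimality. Hence $\bar u$ does not change sign, and after possibly replacing $\bar u$ by $-\bar u$ the strong maximum principle yields $\bar u > 0$. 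Being a positive radial $L^2$-normalized solution of $-\Delta v + |\lambda|v = \mu v^3$ with $|\lambda|>0$, $\bar u$ coincides with $w$ by the Gidas--Ni--Nirenberg/Kwong uniqueness theorem, applied exactly as in the proof of \cite[Proposition 2.2]{BaJeSo}. The main delicate point I anticipate is making the natural constraint principle available at this stage of the paper: either by forward reference to Theorem \ref{thm: constraint single intro}, or by inlining the short scalar analog of Lemmas \ref{lem: splitting}--\ref{lem: crit P} and observing that it goes through verbatim for the single equation.
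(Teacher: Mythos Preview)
Your overall strategy---produce a minimizer on $\mathcal{M}\cap S_a^r$, upgrade it to a solution of the equation via the natural constraint principle, then invoke the uniqueness result from \cite[Proposition~2.2]{BaJeSo}---is essentially the paper's route. The paper is slightly more economical in that it cites \cite{Caz,Jea} for the fact that $w$ is a minimizer rather than reconstructing one via Ekeland and Lemma~\ref{PS on M cubic}, but your path works.

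However, the step ruling out sign-changing minimizers contains a genuine error. You claim that if $\bar u$ changes sign then $\int_{\R^3}|\nabla|\bar u||^2 < \int_{\R^3}|\nabla \bar u|^2$. This is false: for any $u\in H^1(\R^3)$ one has $|\nabla|u||=|\nabla u|$ a.e., hence $\int|\nabla|u||^2 = \int|\nabla u|^2$ always. Consequently $|\bar u|\in\mathcal{M}$ and $I(|\bar u|)=I(\bar u)$, so your projection via $s^\ast$ does not produce a strictly lower energy competitor.

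The fix is precisely what the paper does: use the \emph{equality} to conclude that $|\bar u|$ is itself a minimizer on $\mathcal{M}$, then apply the natural constraint principle (Theorem~\ref{thm: constraint single intro}) together with Lemma~\ref{PS on M cubic} to deduce that $|\bar u|$ is a nonnegative solution of $-\Delta v - \nu v = \mu v^3$ for some $\nu$. Since $\bar u$ was assumed sign-changing, $|\bar u|$ vanishes somewhere, and the strong maximum principle then forces $|\bar u|\equiv 0$, contradicting $|\bar u|\in S_a$.
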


\begin{proof}
The minimality of $w$ is proved in \cite[Proposition 8.2.4]{Caz} or \cite[Lemma 2.10]{Jea}. For the uniqueness, by Theorem \ref{thm: constraint single intro} we know that any minimizer $v$ of $I$ on $\mathcal{M}$ yields a solution $(\nu,v)$ to \eqref{def w}, and hence the uniqueness of $w$ as positive minimizer follows by \cite[Proposition 2.2]{BaJeSo}. Notice that also $-w$ is a minimizer. In order to prove that no-sign-changing minimizer exists, we argue by contradiction noting that if $v$ minimizes $I$ on $\mathcal{M}$, so does $|v|$. Thus, by Theorem \ref{thm: constraint single intro} and the previous lemma, $|v|$ is a non-negative solution to \eqref{def w} for some $\nu \in \R$, and its zero-level set $\{|v|=0\}$ is not empty. The strong maximum principle implies then that $|v| \equiv 0$, which is impossible since $0 \not \in \mathcal{M}$.
\end{proof}


\begin{lemma}\label{lem: s_u scalar}
For any $u \in S_a^r$ there exists a unique $s_u \in \R$ such that $s_u \star u \in \mathcal{M}$. It is defined by the equation
\[
e^{s_u} = \frac{4 \int_{\R^3} |\nabla u|^2}{3 \int_{\R^3} \mu u^4}.
\]
The value $s_u$ is the unique (strict) maximum point of the function $s \mapsto I(s \star u)$.
\end{lemma}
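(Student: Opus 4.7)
The plan is to reduce everything to an elementary one-variable calculus argument for the fibering map $\Psi_u(s):=I(s\star u)$, exactly as in Lemma~\ref{lem: structure P}. First I would compute $\Psi_u$ explicitly using the definition of $\star$ in \eqref{def star}: since $\|s\star u\|_{L^2}=\|u\|_{L^2}$, $\int_{\R^3}|\nabla(s\star u)|^2=e^{2s}\int_{\R^3}|\nabla u|^2$, and $\int_{\R^3}(s\star u)^4=e^{3s}\int_{\R^3}u^4$, a change of variables gives
\[
\Psi_u(s) = \frac{e^{2s}}{2}\int_{\R^3}|\nabla u|^2 - \frac{\mu e^{3s}}{4}\int_{\R^3}u^4.
\]
Writing $A:=\int_{\R^3}|\nabla u|^2$ and $B:=\mu\int_{\R^3}u^4$, note $A,B>0$ since $u\in S_a^r$ forces $u\not\equiv 0$.

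Next I would observe that $s\star u\in\mathcal{M}$ if and only if $e^{2s}A=\frac{3}{4}e^{3s}B$, i.e. precisely when $\Psi_u'(s)=e^{2s}(A-\tfrac{3B}{4}e^s)=0$. This equation has the unique solution
\[
e^{s_u}=\frac{4A}{3B}=\frac{4\int_{\R^3}|\nabla u|^2}{3\mu\int_{\R^3}u^4},
\]
which is the stated formula, and it simultaneously shows both the existence/uniqueness of $s_u$ and that $s_u$ is the only critical point of $\Psi_u$ on $\R$.

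To identify $s_u$ as a strict maximum, I would compute $\Psi_u''(s_u)$: from $\tfrac{3B}{4}e^{3s_u}=Ae^{2s_u}$ one gets $\Psi_u''(s_u)=2Ae^{2s_u}-\tfrac{9B}{4}e^{3s_u}=2Ae^{2s_u}-3Ae^{2s_u}=-Ae^{2s_u}<0$, so $s_u$ is a strict local maximum; combined with the asymptotics $\Psi_u(s)\to 0^+$ as $s\to-\infty$ and $\Psi_u(s)\to-\infty$ as $s\to+\infty$ (the cubic term dominates the quadratic at $+\infty$, and the reverse at $-\infty$), uniqueness of the critical point then forces $s_u$ to be the unique global strict maximum of $\Psi_u$ on $\R$.

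There is no real obstacle here: the lemma is the scalar counterpart of Lemma~\ref{lem: structure P} applied to the $L^2$-preserving dilation $s\star u$, and the whole statement reduces to analysing the polynomial $P(t)=\tfrac{A}{2}t^2-\tfrac{B}{4}t^3$ with $t=e^s>0$, which has exactly one critical point $t=4A/(3B)$ in $(0,\infty)$, and this critical point is a strict maximum.
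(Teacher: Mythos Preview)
Your argument is correct and follows the same approach as the paper: both compute the fibering map $\Psi_u(s)=I(s\star u)=\tfrac{e^{2s}}{2}\int|\nabla u|^2-\tfrac{\mu e^{3s}}{4}\int u^4$ and read off the unique critical point. The paper simply outsources the existence/uniqueness/maximum assertions to \cite[Lemma 2.9]{Jea} and then records the explicit formula, whereas you carry out the (equivalent) one-variable calculus in full; there is no substantive difference.
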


\begin{proof}
Existence and uniqueness are contained in \cite[Lemma 2.9]{Jea}. The explicit expression of $s_u$ follows by direct computations, observing that
\[
I(s \star u) = \frac{e^{2s}}{2} \int_{\R^3} |\nabla u|^2 - \frac{\mu e^{3s}}{4} \int_{\R^3} u^4. \qedhere
\]
\end{proof}

We conclude this section with the simple observation that $0$ is a critical point of the functional $I$ extended to the whole space $H^1(\R^3,\R^2)$, and the free second differential of $I$ in $0$ is positive.

\begin{lemma}\label{lem: 0 minimal}
If we consider $I$ as a functional in $H^1(\R^3,\R^2)$, we have that
\[
d I(0) = 0 \quad \text{and} \quad d^2 I(0)[\varphi,\varphi] =  \int_{\R^3} |\nabla \varphi|^2.
\]
for any $\varphi \in H^1(\R^3,\R^2)$.
\end{lemma}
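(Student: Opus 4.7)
This is a routine differentiability check, so my plan is simply to spell out the first and second Gateaux derivatives of $I$ at the origin. First I would note that both pieces of $I(u)=\frac12\int_{\R^3}|\nabla u|^2-\frac{\mu}{4}\int_{\R^3}u^4$ are smooth on $H^1(\R^3)$: the quadratic term is a continuous bilinear form, and the quartic term is well defined and of class $\mathcal{C}^\infty$ thanks to the Sobolev embedding $H^1(\R^3)\hookrightarrow L^4(\R^3)$ together with H\"older's inequality. (The statement is formulated for $H^1(\R^3,\R^2)$, but since $I$ is a sum over the two components of the scalar functional, it suffices to do the computation componentwise.)

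Next I would differentiate directly. For $u,\varphi\in H^1(\R^3)$ one has
\[
dI(u)[\varphi]=\int_{\R^3}\nabla u\cdot\nabla\varphi-\mu\int_{\R^3}u^3\varphi,
\]
so plugging in $u=0$ immediately yields $dI(0)[\varphi]=0$ for every $\varphi$, i.e.\ $dI(0)=0$. Differentiating once more gives
\[
d^2I(u)[\varphi,\psi]=\int_{\R^3}\nabla\varphi\cdot\nabla\psi-3\mu\int_{\R^3}u^2\varphi\psi,
\]
and setting $u=0$ and $\psi=\varphi$ kills the nonlinear term, leaving $d^2I(0)[\varphi,\varphi]=\int_{\R^3}|\nabla\varphi|^2$, which is the claim.

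There is no real obstacle here; the only thing to be careful about is justifying that the remainder terms in the Taylor expansions are $o(\|\varphi\|_{H^1})$ and $o(\|\varphi\|_{H^1}^2)$ respectively. This is immediate from the continuity of $H^1\hookrightarrow L^4$, since $\int u^3\varphi\le\|u\|_{L^4}^3\|\varphi\|_{L^4}$ and $\int u^2\varphi^2\le\|u\|_{L^4}^2\|\varphi\|_{L^4}^2$, so both terms are controlled by appropriate powers of the $H^1$-norm and vanish to the required order at $u=0$.
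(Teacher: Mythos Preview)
Your proof is correct. The paper does not actually give a proof of this lemma; it is introduced as ``a simple observation'' and stated without justification, so your explicit computation of the first and second derivatives is precisely the routine verification the authors leave to the reader.
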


\subsection{Construction of a Palais-Smale sequence for $J|_{\mathcal{P}}$}

We aim at proving that $J|_{\mathcal{P}}$ satisfies the assumptions of a minimax principle, and more precisely it has a mountain pass geometry. Our argument is somehow inspired by the proof of Theorem 5.4 in \cite{AmCo}, even though this result is tailor-made for the case $\beta>0$. Here several complications arise because we deal with $3$ constraints (and not with $1$) and with arbitrary $\beta<0$.

First, having in mind that the compactness of any Palais-Smale sequence would be far from being trivial, we confine ourselves in a radial setting. That is, we work in $S_{a_1}^r \times S_{a_2}^r$ instead of in $S_{a_1} \times S_{a_2}$. Since the problem is rotation-invariant, this is possible as a consequence of the principle of symmetric criticality \cite{Pa}.

For $i=1,2$, consider $w_i := w_{a_i,\mu_i}$ as defined in \eqref{def w}. We recall that $w_i$ is a minimizer for $I_i:= I_{\mu_i}$ in $\mathcal{M}_i:= \mathcal{M}_{a_i,\mu_i}$ (see \eqref{scalar functional} and \eqref{scalar manifold}). This suggests that $(w_1,0)$ and $(0,w_2)$ are local minimizers for $J$ on $\mathcal{P}$ (recall \eqref{def energy} and \eqref{definition Pohozaev}), so that $J|_{\mathcal{P}}$ has a mountain pass geometry. Of course, such an argument is incorrect in the present setting, since for instance $(w_1,0)$ and $(0,w_2)$ do not belong to $\mathcal{P} \subset S_{a_1} \times S_{a_2}$, or else the set $\mathcal{P}$ is not necessarily connected by arcs for $\beta < -\sqrt{\mu_1 \mu_2}$. On the other hand, in what follows we show how the previous heuristic idea can be adjusted in our context, leading to the following statement:

\begin{proposition}\label{prop: existence PS}
There exists a Palais-Smale sequence $\{(\tilde u_{1,n},\tilde u_{2,n})\}$ at a mountain pass level
\[
c > \max\{\ell(a_1,\mu_1), \ell(a_2,\mu_2)\}
\]
for $J$ restricted to $\mathcal{P}$, satisfying the additional condition $\tilde u_{i,n}^- \to 0$ a.e. in $\R^3$ for $i=1,2$. As a consequence, there exists a Palais-Smale sequence $\{(u_{1,n}, u_{2,n})\}$ for $J$ on $S_{a_1} \times S_{a_2}$, with $(u_{1,n}, u_{2,n}) \in \mathcal{P}$ for every $n$, such that $u_{i,n}^- \to 0$ a.e. in $\R^3$ for $i=1,2$. \\
Moreover, there exists $C>0$ independent of $\beta$ such that $c<C$.
\end{proposition}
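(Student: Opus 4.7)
The plan is to establish a mountain pass geometry for $J$ restricted to the radial Pohozaev manifold $\mathcal{P}^r := \mathcal{P} \cap (S_{a_1}^r \times S_{a_2}^r)$. This is a $\mathcal{C}^1$-submanifold by the same proof as Lemma \ref{lem: manifold}, and by the principle of symmetric criticality \cite{Pa} critical points of $J|_{\mathcal{P}^r}$ coincide with critical points of $J|_{\mathcal{P}}$. To construct two basins I fix positive, radial, smooth, compactly supported test functions $\phi_0 \in S_{a_1}^r$, $\psi_0 \in S_{a_2}^r$, and for a parameter $s$ to be sent to $-\infty$ I consider
\[
\bu_A^s := (w_1, s \star \psi_0), \qquad \bu_B^s := (s \star \phi_0, w_2).
\]
By the scaling identities $\|s \star u\|_q^q = e^{3s(q-2)/2}\|u\|_q^q$, $\|\nabla (s\star u)\|_2^2 = e^{2s}\|\nabla u\|_2^2$, and the exponential decay of $w_i$ (which controls the cross term $\int w_i^2(s\star \cdot)^2$), one verifies $\bu_A^s, \bu_B^s \in \mathcal{E}$ for $s$ sufficiently negative. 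Lemma \ref{lem: structure P} then produces the projections $\bv_A^s := s_{\bu_A^s}\star \bu_A^s$ and $\bv_B^s := s_{\bu_B^s}\star \bu_B^s$ in $\mathcal{P}^r$. Using $J|_{\mathcal{P}} = \frac{1}{6}\|\nabla \cdot\|_2^2$ together with $w_i \in \mathcal{M}_{a_i,\mu_i}$ (so $\|\nabla w_i\|_2^2 = 6\ell_i$, where $\ell_i := \ell(a_i,\mu_i)$), one finds $J(\bv_A^s) \to \ell_1$ and $J(\bv_B^s) \to \ell_2$ as $s \to -\infty$, while the explicit expansion
\[
J(\bu_A^s) = \ell_1 + \tfrac{e^{2s}}{2}\|\nabla\psi_0\|_2^2 + \tfrac{|\beta|}{2}\int_{\R^3} w_1^2(s\star\psi_0)^2 - \tfrac{\mu_2 e^{3s}}{4}\|\psi_0\|_4^4
\]
is strictly greater than $\ell_1$ for $s$ sufficiently negative (the first and third corrections dominate), so $J(\bv_A^s) \geq J(\bu_A^s) > \ell_1$, and analogously $J(\bv_B^s) > \ell_2$.

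Fix such an $s$ and define
\[
\Gamma := \{\gamma \in \mathcal{C}([0,1],\mathcal{P}^r) : \gamma(0) = \bv_A^s,\ \gamma(1) = \bv_B^s\}, \qquad c := \inf_{\gamma \in \Gamma} \max_{\tau\in[0,1]} J(\gamma(\tau)).
\]
Every $\gamma$ starts at $\bv_A^s$ and ends at $\bv_B^s$, so $c \geq \max\{J(\bv_A^s), J(\bv_B^s)\} > \max\{\ell_1,\ell_2\}$, which yields the asserted strict lower bound. For the $\beta$-independent upper bound $c \leq C$, I exhibit an explicit path consisting of pairs $(u_1,u_2) \in \mathcal{P}^r$ with disjoint radial supports (for instance $u_1$ inside $\{|x| \leq r(\tau)\}$ and $u_2$ outside $\{|x| \leq R(\tau)\}$ with $r(\tau) < R(\tau)$ varying continuously, the $L^2$-norms maintained at $a_i$ by rescaling copies of $w_1, w_2$); along such a path the cross term $\int u_1^2 u_2^2$ vanishes identically, so $G$ and $J$ decouple into scalar contributions and $\max_\tau J \leq \ell_1 + \ell_2 + o(1)$ independently of $\beta$.

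The main obstacle is the actual extraction of a Palais-Smale sequence at level $c$ satisfying the sign condition, as this requires the \emph{strict} mountain pass inequality $c > \max\{J(\bv_A^s), J(\bv_B^s)\}$, equivalently that the sublevel set $\{J|_{\mathcal{P}^r} \leq \max\{J(\bv_A^s), J(\bv_B^s)\}\}$ separates the two endpoints. My strategy is to build a topological barrier inside $\mathcal{P}^r$: introduce a continuous locating function $\rho: \mathcal{P}^r \to \R$ (for instance $\rho(\bu) = \|u_1\|_4^4 - \|u_2\|_4^4$) with $\rho(\bv_A^s) > 0 > \rho(\bv_B^s)$, so that every $\gamma \in \Gamma$ crosses $\{\rho = 0\}$, and then show $\inf_{\{\rho = 0\}\cap \mathcal{P}^r} J \geq \max\{\ell_1,\ell_2\} + \delta$ for some $\delta > 0$ independent of $s$. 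This last estimate is the crux: it relies on the minimality characterization of the scalar ground states $w_i$ from Proposition \ref{prop: uniqueness minimizer} combined with the Gagliardo-Nirenberg inequality, exploiting that a balanced configuration forces both components to contribute substantial gradient energy. Once the strict MP is in place, the Ekeland variational principle on the $\mathcal{C}^1$-manifold $\mathcal{P}^r$ produces a Palais-Smale sequence $\{\tilde{\bu}_n\} \subset \mathcal{P}^r$ at level $c$. The condition $\tilde u_{i,n}^- \to 0$ a.e.\ is enforced using the invariance of $J$ and $\mathcal{P}$ under $u_i \mapsto |u_i|$ (since $J$ depends only on $u_i^2$ and $|\nabla u_i|^2 = |\nabla |u_i||^2$), which permits restricting $\Gamma$ and the deformation flow to the cone of nonnegative pairs without altering $c$; the resulting PS sequence can then be modified to satisfy $\tilde u_{i,n} \geq 0$ up to an $H^1$-vanishing correction. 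Finally, parts (i) and (ii) of Theorem \ref{thm: natural} convert this into the required Palais-Smale sequence for $J|_{S_{a_1}\times S_{a_2}}$ lying in $\mathcal{C}^\infty_c(\R^3) \cap \mathcal{P}$, with $u_{i,n}^- \to 0$ preserved thanks to $H^1$-closeness.
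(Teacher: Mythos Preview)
Your overall architecture --- mountain pass on the radial Pohozaev manifold, then transfer to $S_{a_1}\times S_{a_2}$ via Theorem~\ref{thm: natural} --- matches the paper. But two steps that you flag as ``the crux'' or sketch in one line are precisely where the paper does substantial work, and as written they do not go through.

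\textbf{The barrier.} Your proposed separating set $\{\rho=0\}=\{\|u_1\|_4=\|u_2\|_4\}\cap\mathcal{P}^r$ is global, and you do not prove the claimed bound $\inf_{\{\rho=0\}\cap\mathcal{P}^r} J \ge \max\{\ell_1,\ell_2\}+\delta$. It is not clear such a $\delta$ exists: on $\mathcal{P}$ one has $J(\mathbf u)\ge I_1(s_{u_1}\star u_1)+I_2(s_{u_1}\star u_2)\ge \ell_1+I_2(s_{u_1}\star u_2)$, but $I_2(s_{u_1}\star u_2)$ can be negative, and the condition $\|u_1\|_4=\|u_2\|_4$ gives no direct control on it. The paper instead builds a \emph{local} barrier: it shows (Lemmas~\ref{bar s of rho}--\ref{lem: minimax inequality}) that on $\partial\bigl(B(w_1,\rho_1;H^1)\times B(0,\rho_2;\mathcal D^{1,2})\bigr)\cap\mathcal{P}$ one has $J\ge \ell(a_1,\mu_1)+\bar C$. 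The key inputs are quantitative estimates on the projection exponent $s_{u_1}$ for $(u_1,u_2)$ near $(w_1,0)$ and the fact that $w_1$ is the \emph{unique} minimizer of $I_1$ on $\mathcal M_1$ (Proposition~\ref{prop: uniqueness minimizer}), so that staying a fixed $H^1$-distance away from $w_1$ forces $I_1$ strictly above $\ell_1$.

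\textbf{Connectedness and the $\beta$-independent bound.} Your endpoints $\mathbf v_A^s,\mathbf v_B^s$ involve $w_1,w_2$, whose supports are all of $\R^3$; hence the cross term $\int w_1^2(s\star\psi_0)^2$ is nonzero and is multiplied by $2\beta$. For $\beta\ll -\sqrt{\mu_1\mu_2}$ the set $\mathcal E$ is a proper subset of $S_{a_1}\times S_{a_2}$ and $\mathcal P^r$ need not be connected; you give no argument that $\mathbf v_A^s$ and $\mathbf v_B^s$ lie in the same component, so $\Gamma$ could be empty. Relatedly, the disjoint-support path you invoke for the upper bound does not connect to your endpoints (which do not have disjoint supports). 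The paper resolves both issues simultaneously in Lemma~\ref{lem: estremi}: the endpoints themselves are chosen in $\mathcal C^\infty_c$ with mutually disjoint supports, and an explicit path with $\gamma_1(t)\gamma_2(t)\equiv 0$ throughout is written down. This path lies in $\mathcal E$ (hence projects to $\mathcal P$) for every $\beta<0$, which both guarantees $\Gamma\neq\emptyset$ and gives the uniform bound $c_\beta\le C$.

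Finally, your treatment of the sign condition is too loose: taking absolute values preserves $\mathcal P$, but you must also verify $|\gamma|$ stays in the \emph{same connected component} $\bar{\mathcal P}$ so that $|\gamma|\in\Gamma$; the paper does this explicitly and then invokes Ghoussoub's minimax principle to get a Palais--Smale sequence $H^1$-close to the (nonnegative) minimizing path.
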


Recall that $\ell(a_i,\mu_i)$ denotes the ground state energy level $I_i(w_i)$. Without loss of generality, we can suppose that
\begin{equation}\label{hp sui livelli}
\ell(a_1,\mu_1) \ge \ell(a_2,\mu_2).
\end{equation}
We have already mentioned that $(w_1,0),(0,w_2) \not \in S_{a_1}^r \times S_{a_2}^r$. On the other hand they both belong to the closure, with respect to the $\mathcal{D}^{1,2}$ topology, of $S_{a_1}^r \times S_{a_2}^r$, where as usual
\[
\mathcal{D}^{1,2}:= \left\{ u \in L^6(\R^3): \int_{\R^3} |\nabla u|^2 <+\infty \right\}, \quad \|u\|_{\D^{1,2}}^2 := \int_{\R^3} |\nabla u|^2.
\]
Actually, we can easily check that any family of type $\{(w_1, s \star v): s \in \R\}$, with $v \in S_{a_2}$, strongly converges in $\mathcal{D}^{1,2}$, as $s \to -\infty$, to $(w_1,0)$. It is sufficient to observe that
\[
\|s \star v\|_{\mathcal{D}^{1,2}}^2 = \int_{\R^3} |\nabla (s \star v)|^2 = e^{2s} \int_{\R^3} |\nabla v|^2 \to 0
\]
as $s \to -\infty$. In particular, this implies that
\[
\left( B(w_1,\rho_1; H^1) \times B(0,\rho_2; \mathcal{D}^{1,2}) \right) \cap \left( S_{a_1}^r \times S_{a_2}^r \right) \neq \emptyset
\]
for any $\rho_1,\rho_2 >0$, where $B(u,\rho; F)$ denotes the ball in $F$ (Banach space) with centre $u$ and radius $\rho$.

Before proceeding, we also emphasize that in the previous example there is no strong convergence in $H^1(\R^3,\R^2)$, since $\|s \star v\|_{L^2(\R^3)} = a_2$ for every $s$. This phenomenon is related to the fact that weak convergence in $H^1_{\textrm{rad}}(\R^3)$ does not imply strong convergence in $L^2(\R^3)$, and is a source of complications when dealing with normalization constraints of type \eqref{normalization}.

In order to prove Proposition \ref{prop: existence PS}, we investigate $s_{u_1}$, defined in Lemma \ref{lem: s_u scalar}, for $(u_1,u_2) \in \left( B(w_1,\rho_1; H^1) \times B(0,\rho_2; \mathcal{D}^{1,2}) \right) \cap  \mathcal{P}$ and determine the asymptotic behaviour when $\rho_1,\rho_2 \to 0$.


\begin{lemma}\label{bar s of rho}
There exist $\delta_1>0$ small and $C>0$ such that
\[
0<\rho_1,\rho_2<\delta_1 \quad \Longrightarrow \quad |s_{u_1}| \le C \rho_2^{3/2},
\]
for every $(u_1,u_2) \in \left( B(w_1,\rho_1; H^1)  \times B(0,\rho_2;\mathcal{D}^{1,2}) \right)  \cap \mathcal{P}$.
\end{lemma}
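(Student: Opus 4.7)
The plan is to compare $e^{s_{u_1}}$ with $1$ using the identity satisfied by $w_1$ and the Pohozaev condition $G(u_1,u_2)=0$. By Lemma~\ref{lem: s_u scalar} applied with mass $a_1$ and coupling $\mu_1$,
\[
e^{s_{u_1}} = \frac{4 \int_{\R^3} |\nabla u_1|^2}{3 \mu_1 \int_{\R^3} u_1^4},
\]
and since $w_1 \in \mathcal{M}_1$ the same formula gives $s_{w_1} = 0$, i.e.\ $\int|\nabla w_1|^2 = \tfrac{3\mu_1}{4}\int w_1^4$. Using the relation $G(u_1,u_2)=0$ (recall the definition \eqref{def G}) to isolate the ``scalar'' part in $u_1$, I would rewrite
\[
e^{s_{u_1}} - 1 = \frac{4\, R(u_1,u_2)}{3\mu_1 \int_{\R^3} u_1^4}, \qquad R(u_1,u_2):= \frac{3\mu_2}{4}\int_{\R^3} u_2^4 + \frac{3\beta}{2}\int_{\R^3} u_1^2 u_2^2 - \int_{\R^3} |\nabla u_2|^2.
\]
Thus it suffices to control $R(u_1,u_2)$ and bound $\int u_1^4$ from below.

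Next I would estimate each of the three terms of $R$ via the three-dimensional Gagliardo--Nirenberg inequality
\[
\|v\|_{L^4(\R^3)} \le C\, \|\nabla v\|_{L^2}^{3/4}\, \|v\|_{L^2}^{1/4}.
\]
Applied to $u_2 \in S_{a_2}$ with $\|\nabla u_2\|_{L^2} \le \rho_2$ this yields $\int u_2^4 \le C a_2 \rho_2^{3}$. Since $u_1\in B(w_1,\rho_1; H^1)$, the bound $\|u_1\|_{H^1} \le \|w_1\|_{H^1}+\rho_1$ makes $\|u_1\|_{L^4}$ bounded uniformly (shrinking $\rho_1$), so by Cauchy--Schwarz $\int u_1^2 u_2^2 \le \|u_1\|_{L^4}^2 \|u_2\|_{L^4}^2 \le C \rho_2^{3/2}$. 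Finally $\int |\nabla u_2|^2 \le \rho_2^2$. For $\rho_2<1$ the cross-term $\rho_2^{3/2}$ dominates both $\rho_2^2$ and $\rho_2^3$, so
\[
|R(u_1,u_2)| \le C\, \rho_2^{3/2}.
\]

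To close the argument, I would use that $u_1\to w_1$ in $H^1$ as $\rho_1\to 0$ forces $\int u_1^4 \to \int w_1^4 >0$ by the embedding $H^1(\R^3)\hookrightarrow L^4(\R^3)$; after further shrinking $\delta_1$, one ensures $\int u_1^4 \ge \tfrac{1}{2}\int w_1^4$ uniformly in the relevant ball. Combining with the previous display gives $|e^{s_{u_1}}-1| \le C\rho_2^{3/2}$, and a final shrinking of $\delta_1$ makes this quantity smaller than $1/2$, so that smoothness of $\log$ near $1$ yields $|s_{u_1}| \le C\rho_2^{3/2}$ with a constant $C$ independent of $\rho_1,\rho_2$ (and of $\beta<0$ on compact $\beta$-ranges, though in principle $C$ scales linearly with $|\beta|$).

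The main delicacy is tracking the correct exponent: it is the cross term $\int u_1^2 u_2^2$ (present precisely because $\beta\neq 0$) that produces the exponent $3/2$ through the square of the Gagliardo--Nirenberg exponent $3/4$, whereas the pure $\int u_2^4$ and $\int|\nabla u_2|^2$ terms would only give $\rho_2^3$ and $\rho_2^2$; so the rate in the lemma is sharp and dictated by the coupling.
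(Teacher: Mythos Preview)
Your proof is correct and arrives at the same conclusion as the paper, but through a cleaner algebraic route. The paper starts from the identity
\[
1 = \frac{4\sum_i\int|\nabla u_i|^2}{3\sum_{i,j}\int\beta_{ij}u_i^2 u_j^2}
\]
and expands this ratio about the ``scalar'' ratio $\frac{4\int|\nabla u_1|^2}{3\mu_1\int u_1^4}=e^{s_{u_1}}$ via the mean value theorem, producing two correction terms with an intermediate parameter $\xi\in(0,1)$ that must each be estimated. You bypass this by solving the Pohozaev identity directly for $\int|\nabla u_1|^2-\tfrac{3\mu_1}{4}\int u_1^4$, which yields the closed expression $e^{s_{u_1}}-1=\tfrac{4R}{3\mu_1\int u_1^4}$ with a single explicit remainder $R$. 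After this divergence the arguments coincide: both use Gagliardo--Nirenberg with exponent $3/4$ to get $\int u_2^4=O(\rho_2^{3})$ and $\int u_1^2 u_2^2=O(\rho_2^{3/2})$, both bound $\int u_1^4$ from below by continuity of $H^1\hookrightarrow L^4$, and both conclude $e^{s_{u_1}}=1+O(\rho_2^{3/2})$. Your route is shorter and makes the structure more transparent; the paper's mean-value expansion is heavier but would generalize more readily to perturbations where the dependence on the small parameters is not linear. Your closing remark on the $\beta$-dependence of $C$ is accurate but unnecessary here, since the lemma is stated for fixed $\beta$.
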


\begin{proof}
On one side, as $(u_1,u_2) \in \mathcal{P}$,
\begin{equation}\label{u su P}
1 = \frac{4 \int_{\R^3} \sum_i |\nabla u_i|^2}{3 \int_{\R^3} \sum_{i,j} \beta_{ij} u_i^2 u_j^2}.
\end{equation}
On the other hand, by the Lagrange theorem there exists $\xi \in (0,1)$ such that
\begin{equation}\label{19024}
\begin{split}
 \frac{4 \int_{\R^3} \sum_i |\nabla u_i|^2}{3 \int_{\R^3} \sum_{i,j} \beta_{ij} u_i^2 u_j^2}
&= \frac{4 \int_{\R^3} |\nabla u_1|^2}{3 \int_{\R^3}  \beta_{11} u_1^4} \\
& \quad + \frac{4\int_{\R^3}|\nabla u_2|^2}{ 3\left(\int_{\R^3}  \beta_{11} u_1^4 + \xi \int_{\R^3} 2\beta_{12} u_1^2 u_2^2 + \beta_{22} u_2^4\right)}\\
& \quad - \frac{4\left( \int_{\R^3} |\nabla u_1|^2 + \xi |\nabla u_2|^2\right) \left( \int_{\R^3} 2\beta_{12} u_1^2 u_2^2 + \beta_{22} u_2^4\right)}{ 3\left(\int_{\R^3}  \beta_{11} u_1^4 + \xi \int_{\R^3} 2\beta_{12} u_1^2 u_2^2 + \beta_{22} u_2^4\right)^2}
\end{split}
\end{equation}
The first term on the right hand side is, by definition, $\exp(s_u)$ (see Lemma~\ref{lem: s_u scalar}). In order to estimate the remaining terms on the right hand side, we recall the Gagliardo-Nirenberg inequality: there exists $S>0$ such that
\[
\int_{\R^3} w^4 \le S \left( \int_{\R^3} w^2 \right)^{1/2} \left(\int_{\R^3} |\nabla w|^2 \right)^{3/2} \qquad\text{for all } w \in H^1(\R^3).
\]
Let $\rho_1$ and $\rho_2$ small so that
\begin{equation}\label{choice rho}
\rho_2  \le \min\left\{1,\int_{\R^3}|\nabla w_1|^2, \frac{1}{4} \int_{\R^3} w_1^4 \right\} \le \frac12\int_{\R^3} u_1^4 \le \int_{\R^3} w_1^4
\end{equation}
for $\|u_1-w_1\|_{H^1} < \rho_1$. For $(u_1,u_2) \in \left(B(w_1,\rho_1; H^1) \times B(0,\rho_2;\D^{1,2})\right) \cap \mathcal{P}$ and $\xi \in (0,1)$, we have
\[
\int_{\R^3} |\nabla u_1|^2 + \xi |\nabla u_2|^2 \le 3 \int_{\R^3} |\nabla w_1|^2,
\]
and
\begin{align*}
\left| \int_{\R^3} \beta_{22} u_2^4 + 2\beta_{12} u_1^2 u_2^2 \right|  & \le \beta_{22} S a_2 \|u_2\|_{\mathcal{D}^{1,2}}^3 + 2|\beta_{12}| \left(\int_{\R^3} u_1^4 \right)^{1/2}\left(\int_{\R^3} u_2^4 \right)^{1/2}  \\
& \le \beta_{22} S a_2 \|u_2\|_{\mathcal{D}^{1,2}}^3 + \sqrt{8 S a_2} |\beta_{12}| \|w_1\|_{L^4}^{1/2}  \|u_2\|_{\D^{1,2}}^{3/2} \\
& \le C \|u_2\|_{\mathcal{D}^{1,2}}^{3/2},
\end{align*}
hence, replacing $\rho_2$ with a smaller quantity if necessary,
\[
\bigg|\int_{\R^3}  \beta_{11} u_1^4   + \xi \int_{\R^3} 2\beta_{12} u_1^2 u_2^2 + \beta_{22} u_2^4\bigg|
 \ge \frac12 \int_{\R^3}  \beta_{11} w_1^4-C \rho_2^{3/2}  \ge \frac{1}{4} \int_{\R^3} \beta_{11} w_1^4.
\]
As a consequence we obtain
\begin{equation}\label{19022}
\begin{aligned}
&\left| \frac{\int_{\R^3}|\nabla u_2|^2}{ \left(\int_{\R^3}  \beta_{11} u_1^4 + \xi \int_{\R^3} 2\beta_{12} u_1^2 u_2^2 + \beta_{22} u_2^4\right)}\right|
&\hspace{.5cm}
\le \frac{4 \|u_2\|_{\mathcal{D}^{1,2}}^2}{ \int_{\R^3} \beta_{11} w_1^4}   \le  C \rho_2^2,
\end{aligned}
\end{equation}
and
\begin{multline}\label{19023}
 \left|\frac{\left( \int_{\R^3} |\nabla u_1|^2 + \xi |\nabla u_2|^2\right) \left( \int_{\R^3} 2\beta_{12} u_1^2 u_2^2 + \beta_{22} u_2^4\right)}{ \left(\int_{\R^3}  \beta_{11} u_1^4 + \xi \int_{\R^3} 2\beta_{12} u_1^2 u_2^2 + \beta_{22} u_2^4\right)^2}\right| \\
 \le C\frac{\left( \int_{\R^3} |\nabla w_1|^2\right) \|u_2\|_{\D^{1,2}}^{3/2}}{\int_{\R^3} \beta_{11} w_1^4 }
 \le C \rho_2^{3/2}.
\end{multline}
Using \eqref{u su P}, \eqref{19022} and \eqref{19023}, we see that \eqref{19024} becomes
\[
1= e^{s_{u_1}} + O\left( \rho_2^{3/2}\right),
\]
which implies the lemma.
\end{proof}

\begin{lemma}\label{lem: 0104}
For any $r_1,r_2 > 0$ there exists $\delta_2 \in (0,\delta_1]$ such that if $\rho_1,\rho_2 \in (0,\delta_2)$, then
\[
\|s_{u_1} \star u_1-w_1\|_{H^1} < r_1 \quad \text{and} \quad \|s_{u_1} \star u_2\|_{\D^{1,2}} < r_2
\]
for every $(u_1,u_2) \in \left( B(w_1,\rho_1; H^1)  \times B(0,\rho_2;\mathcal{D}^{1,2}) \right)  \cap \mathcal{P}$.
\end{lemma}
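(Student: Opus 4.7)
The plan is to combine the quantitative control on $|s_{u_1}|$ provided by Lemma \ref{bar s of rho} with the scaling behaviour of the $\star$-action to make both norms small once $\delta_2$ is chosen sufficiently small. Recall from the previous lemma that for $(u_1,u_2) \in \left(B(w_1,\rho_1;H^1) \times B(0,\rho_2;\D^{1,2})\right) \cap \mathcal{P}$ with $\rho_1,\rho_2 \in (0,\delta_1]$, one has $|s_{u_1}| \le C \rho_2^{3/2}$; in particular $s_{u_1} \to 0$ and $e^{s_{u_1}}$ stays bounded uniformly in $(u_1,u_2)$ as $\delta_2 \to 0$.

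The second inequality I would handle directly. The $\star$-action preserves the $L^2$-norm and scales the gradient by $e^s$, so
\[
\|s_{u_1} \star u_2\|_{\D^{1,2}} = e^{s_{u_1}} \|u_2\|_{\D^{1,2}} \le e^{C\delta_2^{3/2}} \rho_2,
\]
which is smaller than $r_2$ as soon as $\delta_2$ is small enough. This is the easy part.

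For the first inequality, the plan is to split via the triangle inequality as
\[
\|s_{u_1} \star u_1 - w_1\|_{H^1} \le \|s_{u_1} \star (u_1 - w_1)\|_{H^1} + \|s_{u_1} \star w_1 - w_1\|_{H^1}.
\]
The first term I would estimate by the scaling identity $\|s \star v\|_{H^1}^2 = \|v\|_{L^2}^2 + e^{2s} \|\nabla v\|_{L^2}^2$ together with the linearity of $s \star (\cdot)$, yielding $\|s_{u_1} \star (u_1-w_1)\|_{H^1} \le \sqrt{1+e^{2C\delta_2^{3/2}}}\,\rho_1$, which is less than $r_1/2$ for $\delta_2$ small. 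For the second term I would invoke the fact that the map $s \mapsto s \star w_1$ is continuous at $s=0$ from $\R$ into $H^1(\R^3)$ (this is essentially Lemma \ref{lem: s strong convergence} applied to the constant sequence $w_1$, and amounts to strong continuity of the dilation action on $H^1$); since $|s_{u_1}| \le C\delta_2^{3/2}$ uniformly in $(u_1,u_2)$, this term is also less than $r_1/2$ for $\delta_2$ sufficiently small. Taking the minimum of all the thresholds on $\delta_2$ completes the argument.

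I do not anticipate any serious obstacle: the lemma is essentially a uniform continuity statement for the scaling action near $s=0$, promoted by the quantitative bound on $s_{u_1}$ from Lemma \ref{bar s of rho}. The only subtle point is that the control on $s_{u_1}$ must be uniform over the product ball intersected with $\mathcal{P}$, but this uniformity is already built into the conclusion of the previous lemma.
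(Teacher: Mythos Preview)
Your proposal is correct and follows essentially the same route as the paper: the same triangle-inequality split $\|s_{u_1}\star u_1 - w_1\|_{H^1} \le \|s_{u_1}\star(u_1-w_1)\|_{H^1} + \|s_{u_1}\star w_1 - w_1\|_{H^1}$, the same use of the scaling identity for the first term (the paper uses $\max\{e^{s_{u_1}},1\}$ where you use $\sqrt{1+e^{2s_{u_1}}}$, which is immaterial), the same appeal to Lemma~\ref{lem: s strong convergence} for the second term, and the identical computation $\|s_{u_1}\star u_2\|_{\D^{1,2}} = e^{s_{u_1}}\|u_2\|_{\D^{1,2}}$ for the $\D^{1,2}$ estimate.
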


\begin{proof}
Let $0<\rho_1,\rho_2<\delta_1$. First of all, by Lemma~\ref{bar s of rho} and Lemma~\ref{lem: s strong convergence} there exists $\delta' \in (0,\delta_1]$ so that $0<\rho_2<\delta'$ implies
\[
\|s_{u_1} \star w_1-w_1\|_{H^{1}} < \frac{r_1}{2}
\]
for every $(u_1,u_2) \in \left( B(w_1,\rho_1; H^1) \times B(0,\rho_2;\mathcal{D}^{1,2}) \right)  \cap \mathcal{P}$. Now we observe that for $\rho_1 \in (0,\delta_1)$ and $\rho_2 \in (0,\delta')$
\begin{align*}
\|s_{u_1} \star u_1-w_1\|_{H^1} & \le \|s_{u_1} \star u_1- s_{u_1} \star w_1\|_{H^1} + \| s_{u_1} \star w_1-w_1\|_{H^1} \\
& \le \max\{e^{s_{u_1}},1\} \|u_1-w_1\|_{H^1} + \frac{r_1}{2} \\
&  \le e^{C \rho_2^{3/2}} \rho_1 + \frac{r_1}{2} < r_1
\end{align*}
provided $\rho_1$ and $\rho_2$ are small enough, and similarly
\[
\|s_{u_1} \star u_2\|_{\D^{1,2}} = e^{s_{u_1}} \| u_2\|_{\D^{1,2}} \le e^{C \rho_2^{3/2}} \rho_2 < r_2
\]
for $\rho_2$ small.
\end{proof}

\begin{lemma}\label{lem: last quantification}
Let $\rho_1 \in (0, \delta_2)$ be fixed. There exists $\delta_3 \in (0,\delta_2]$ (possibly depending on $\rho_1$) such that
\[
\inf \left\{ \| s_{u_1} \star u_1- w_1 \|_{H^1} \left| \begin{array}{l} (u_1,u_2) \in \mathcal{P} \\
u_1 \in \pa B(w_1,\rho_1; H^1)  \\
u_2 \in  B(0,\rho_2;\mathcal{D}^{1,2}) \end{array}\right. \right\} > \frac{\rho_1}{2},
\]
and
\[
\inf \left\{ \| s_{u_1} \star u_2 \|_{\D^{1,2}} \left| \begin{array}{l} (u_1,u_2) \in \mathcal{P} \\
u_1 \in  B(w_1,\rho_1; H^1)  \\
u_2 \in  \pa B(0,\rho_2;\mathcal{D}^{1,2}) \end{array}\right. \right\} > \frac{\rho_2}{2}
\]
for every $\rho_2 \in (0,\delta_3)$.
\end{lemma}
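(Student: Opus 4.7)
The plan is to derive both bounds by combining the uniform smallness of $s_{u_1}$ from Lemma~\ref{bar s of rho} with the elementary scaling behavior of the $L^2$ and $\mathcal{D}^{1,2}$ norms under the $s \star \cdot$ action. Throughout one has $|s_{u_1}| \le C\rho_2^{3/2}$ with $C$ independent of $u_1, u_2$, so shrinking $\rho_2$ makes $s_{u_1}$ uniformly small on the class of $(u_1,u_2)$ under consideration.

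The second inequality is nearly immediate. A direct change of variables in the definition \eqref{def star} gives $\|s\star u\|_{\mathcal{D}^{1,2}} = e^s \|u\|_{\mathcal{D}^{1,2}}$, so for $u_2 \in \partial B(0,\rho_2; \mathcal{D}^{1,2})$
\[
\|s_{u_1}\star u_2\|_{\mathcal{D}^{1,2}} = e^{s_{u_1}}\rho_2 \ge e^{-C\rho_2^{3/2}}\rho_2,
\]
and for $\rho_2$ sufficiently small (so that $C\rho_2^{3/2} < \log 2$) the right-hand side is strictly larger than $\rho_2/2$.

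The first inequality is subtler, because the $L^2$ norm is preserved by $s\star\cdot$ and there is no analogous multiplicative gain. The plan is instead to exploit the linearity of the scaling to decompose
\[
s_{u_1}\star u_1 - w_1 = s_{u_1}\star(u_1 - w_1) + (s_{u_1}\star w_1 - w_1),
\]
and to apply the triangle inequality in the form $\|A + B\|_{H^1} \ge \|A\|_{H^1} - \|B\|_{H^1}$. The first summand admits an algebraic lower bound from the identity $\|s\star v\|_{H^1}^2 = \|v\|_{L^2}^2 + e^{2s}\|\nabla v\|_{L^2}^2 \ge \min\{1,e^{2s}\}\|v\|_{H^1}^2$; combined with $\|u_1 - w_1\|_{H^1} = \rho_1$ this gives $\|s_{u_1}\star(u_1 - w_1)\|_{H^1} \ge e^{-C\rho_2^{3/2}}\rho_1$. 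The second summand vanishes uniformly with $\rho_2$: applying Lemma~\ref{lem: s strong convergence} to the constant sequence $u_n \equiv w_1$ shows that $s \mapsto s\star w_1$ is $H^1$-continuous at $s=0$, whence $\sup_{|s|\le C\rho_2^{3/2}}\|s\star w_1 - w_1\|_{H^1} \to 0$ as $\rho_2\to 0$. Choosing $\delta_3$ small enough so that $e^{-C\delta_3^{3/2}} \ge 3/4$ and the previous supremum is below $\rho_1/8$, one concludes $\|s_{u_1}\star u_1 - w_1\|_{H^1} \ge 3\rho_1/4 - \rho_1/8 > \rho_1/2$ for every $\rho_2 \in (0,\delta_3)$.

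The main subtlety, and the reason for this specific decomposition, is that the scaling action is not uniformly continuous on bounded subsets of $H^1(\R^3)$ at $s=0$, so a direct estimate of $\|s_{u_1}\star u_1 - u_1\|_{H^1}$ cannot be controlled uniformly in $u_1$. Going through $w_1$ sidesteps the issue: the dependence on the variable function $u_1$ is absorbed into the purely algebraic lower bound on $\|s\star(u_1-w_1)\|_{H^1}$, while continuity of the $\star$ action is only invoked at the single fixed function $w_1$.
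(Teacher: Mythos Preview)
Your proof is correct and follows essentially the same approach as the paper: both proofs rely on the decomposition $s_{u_1}\star u_1 - w_1 = s_{u_1}\star(u_1-w_1) + (s_{u_1}\star w_1 - w_1)$, the algebraic lower bound on $\|s\star v\|_{H^1}$, the continuity of $s\mapsto s\star w_1$ at $s=0$ (via Lemma~\ref{lem: s strong convergence}), and the smallness of $s_{u_1}$ from Lemma~\ref{bar s of rho}. The only cosmetic difference is that the paper packages the first estimate as a contradiction argument with sequences $\rho_{2,n}\to 0$, whereas you give a direct quantitative version with explicit constants; the second estimate is handled identically in both.
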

\begin{proof}
We start with the first estimate in the thesis. Let us suppose by contradiction that there exist sequences $\rho_{2,n} \to 0$ and
\[
(u_{1,n},u_{2,n}) \in \left( \pa B(w_1,\rho_1; H^1)  \times B(0,\rho_{2,n};\mathcal{D}^{1,2}) \right)  \cap \mathcal{P},
\]
such that $s_n:= s_{u_{1,n}}$ satisfies
\[
\| s_n \star u_{1,n}- w_1 \|_{H^1} \le \rho_1/2 \qquad \forall n.
\]
First, from Lemma \ref{bar s of rho} we deduce that $s_n \to 0$ as $n \to \infty$, and hence by Lemma \ref{lem: s strong convergence} we have $s_n \star w_1 \to w_1$ strongly in $H^1(\R^3)$ as $n \to \infty$.

Now it is not difficult to obtain a contradiction, using again the fact that $s_n \to 0$:
\begin{align*}
\| s_n \star u_{1,n}- w_1 \|_{H^1}& \ge \|s_n \star u_{1,n}- s_n \star w_1 \|_{H^1}- \| s_n \star w_{1}- w_1 \|_{H^1} \\
& = e^{s_n } \|\nabla(u_{1,n}-w_1)\|_{L^2} + \| u_{1,n}-w_1\|_{L^2}- o(1)  \to \rho_1
\end{align*}
as $n \to \infty$, a contradiction.

For the second estimate in the thesis, we use again Lemma \ref{bar s of rho}:
\[
\| s_{u_1} \star u_2\|_{\D^{1,2}} = e^{s_{u_1}} \|u_2\|_{\D^{1,2}} \ge  e^{-C \rho_2^{3/2}} \rho_2 > \frac{\rho_2}{2}
\]
whenever $u_2 \in \pa B(0,\rho_2; \D^{1,2})$, provided $\rho_2>0$ is small enough.
\end{proof}

\begin{lemma}\label{lem: minimax inequality}
There exist $\rho_1,\rho_2, \bar C> 0$ such that
\begin{align*}
J(u_1,u_2) &\ge \ell(a_1,\mu_1) + \bar C
\end{align*}
for every $(u_1,u_2) \in  \pa \left( B(w_1,\rho_1; H^1) \times B(0,\rho_2; \mathcal{D}^{1,2}) \right) \cap  \mathcal{P}$.
\end{lemma}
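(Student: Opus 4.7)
The plan is to exploit Lemma~\ref{lem: structure P}, which says that any point of $\mathcal{P}$ is the strict maximum over $s \in \R$ of the map $s \mapsto J(s \star \mathbf{u})$. Given $\mathbf{u} = (u_1, u_2) \in \mathcal{P}$ on the boundary in question, let $s_{u_1} \in \R$ be the scalar dilation parameter supplied by Lemma~\ref{lem: s_u scalar} applied to $u_1 \in S_{a_1}^r$, and set $v_i := s_{u_1} \star u_i$. Then $v_1 \in \mathcal{M}_{a_1,\mu_1}$, so Proposition~\ref{prop: uniqueness minimizer} yields $I_1(v_1) \ge \ell(a_1,\mu_1)$. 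Combining this with $\beta < 0$,
\[
J(u_1,u_2) \;\ge\; J(v_1,v_2) \;=\; I_1(v_1) + I_2(v_2) - \tfrac{\beta}{2}\int_{\R^3} v_1^2 v_2^2 \;\ge\; \ell(a_1,\mu_1) + I_2(v_2),
\]
so the task reduces to bounding $[I_1(v_1)-\ell(a_1,\mu_1)] + I_2(v_2)$ below by a positive constant, uniformly on the boundary.

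On the face where $\|u_2\|_{\mathcal{D}^{1,2}} = \rho_2$ and $\|u_1-w_1\|_{H^1} \le \rho_1$, the second estimate of Lemma~\ref{lem: last quantification} provides $\|v_2\|_{\mathcal{D}^{1,2}} \ge \rho_2/2$, while Lemma~\ref{bar s of rho} yields the cheap upper bound $\|v_2\|_{\mathcal{D}^{1,2}} \le e^{C\rho_2^{3/2}}\rho_2$. Inserting both into the Gagliardo-Nirenberg lower estimate $I_2(v_2) \ge \tfrac12 \|v_2\|_{\mathcal{D}^{1,2}}^2 - C\|v_2\|_{\mathcal{D}^{1,2}}^3$ gives $I_2(v_2) \ge \rho_2^2/16$ once $\rho_2$ is sufficiently small, which settles this face.

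On the face where $\|u_1-w_1\|_{H^1} = \rho_1$ and $\|u_2\|_{\mathcal{D}^{1,2}} \le \rho_2$, Lemma~\ref{lem: last quantification} (first estimate) together with Lemma~\ref{lem: 0104} sandwich $v_1$ in the annulus $\rho_1/2 \le \|v_1-w_1\|_{H^1} \le r_1$, with $r_1$ as small as we please after further shrinking $\rho_1,\rho_2$. This face hinges on the strict positivity
\[
\eta \;:=\; \inf\bigl\{ I_1(v) - \ell(a_1,\mu_1) \,:\, v \in \mathcal{M}_{a_1,\mu_1} \cap S_{a_1}^r,\ \tfrac{\rho_1}{2} \le \|v-w_1\|_{H^1} \le r_1 \bigr\} \;>\; 0,
\]
combined with the routine bound $I_2(v_2) \ge 0$ (again by Gagliardo-Nirenberg, for $\rho_2$ small). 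Taking $\bar C := \min\{\eta, \rho_2^2/16\}$ then completes the proof.

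The main obstacle is the positivity of $\eta$. I would argue by contradiction: a minimizing sequence $v_n$ in the annulus with $I_1(v_n) \to \ell(a_1,\mu_1)$ feeds Ekeland's variational principle on the closed $\mathcal{C}^1$-submanifold $\mathcal{M}_{a_1,\mu_1} \cap S_{a_1}^r$, producing a nearby Palais-Smale sequence for $I_1|_{\mathcal{M}_{a_1,\mu_1}\cap S_{a_1}^r}$ at level $\ell(a_1,\mu_1)$. The radial version of Lemma~\ref{PS on M cubic} (via the principle of symmetric criticality~\cite{Pa}) then delivers a strong $H^1$-subsequential limit that minimizes $I_1$ on $\mathcal{M}_{a_1,\mu_1}$, and hence equals $\pm w_1$ by Proposition~\ref{prop: uniqueness minimizer}. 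Fixing $r_1 < \|w_1\|_{H^1}$ from the outset rules out $-w_1$ (separated from the annulus in $H^1$), while $\|v_n-w_1\|_{H^1} \ge \rho_1/2$ rules out $+w_1$: contradiction.
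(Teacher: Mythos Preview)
Your proof is correct and follows essentially the same route as the paper's: splitting the boundary into its two faces, using $J(u_1,u_2)\ge I_1(s_{u_1}\star u_1)+I_2(s_{u_1}\star u_2)$ via Lemma~\ref{lem: structure P} and $\beta<0$, handling the $\|u_2\|_{\mathcal D^{1,2}}=\rho_2$ face with the quadratic lower bound on $I_2$ near $0$ together with Lemma~\ref{lem: last quantification}, and handling the $\|u_1-w_1\|_{H^1}=\rho_1$ face by the annulus argument and the uniqueness of the minimizer (Proposition~\ref{prop: uniqueness minimizer}). The one place where you are actually more careful than the paper is the strict positivity of $\eta$: the paper applies Lemma~\ref{PS on M cubic} directly to a minimizing (not a priori Palais--Smale) sequence, whereas you explicitly invoke Ekeland's principle on $\mathcal{M}_{a_1,\mu_1}\cap S_{a_1}^r$ to pass to a nearby Palais--Smale sequence before using the PS condition---this fills a small gap in the paper's presentation.
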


\begin{proof}
Let $(u_1,u_2) \in \mathcal{P}$, and recall that $s_{u_1} \star u_1 \in \mathcal{M}_1$, with $\mathcal{M}_1$ defined in \eqref{scalar manifold}.

As a consequence of Lemma \ref{lem: structure P}, and using $\beta \le 0$, we obtain
\begin{equation}\label{0404}
J( u_1,u_2) \ge J(s_{u_1} \star (u_1,u_2)) \ge I_1(s_{u_1} \star u_1) + I_2(s_{u_1} \star u_2).
\end{equation}
To estimate the right hand side, we observe that by Lemma \ref{lem: 0 minimal} there exist $\bar \rho_2>0$ and $C>0$ such that
\begin{equation}\label{04041}
I(v_2) \ge C\|v_2\|_{\mathcal{D}^{1,2}}^2 \qquad  \text{if $v_2 \in B(0,\bar \rho_2;\mathcal{D}^{1,2})$}.
\end{equation}
Let $\bar \rho_1>0$ be such that $\bar \rho_1 < \| w_1\|_{H^1}$. By Lemma \ref{lem: 0104} there exists $\delta_2 >0$ such that $\rho_1,\rho_2 \in (0,\delta_2)$ implies
\[
s_{u_1} \star (u_1,u_2) \in B(w_1,\bar \rho_1; H^1) \times B(0,\bar \rho_2;\mathcal{D}^{1,2})
\]
provided $(u_1,u_2) \in  \left( B(w_1,\rho_1; H^1) \times B(0,\rho_2;\mathcal{D}^{1,2}) \right) \cap \mathcal{P}$. Now we fix $\rho_1 \in (0,\delta_2)$, and next $\rho_2 \in (0,\delta_3]$, with $\delta_3>0$ given by Lemma \ref{lem: last quantification}. We claim that $\rho_1$ and $\rho_2$ are the desired quantities. To prove the claim, we observe first that the boundary of $B(w_1,\rho_1; H^1) \times B(0,\rho_2;\mathcal{D}^{1,2})$ splits as
\[
\left[\pa B(w_1,\rho_1; H^1) \times B(0,\rho_2; \mathcal{D}^{1,2})\right] \cup \left[   B(w_1,\rho_1; H^1) \times \pa B(0,\rho_2; \mathcal{D}^{1,2})\right].
\]
Since $s_{u_1}\star u_1 \in \mathcal{M}_1$ and $\ell(a_1,\mu_1) = \inf_{\mathcal{M}_1} I_1$, we have by \eqref{04041} and Lemma \ref{lem: last quantification} (notice that the lemma is applicable in light of our choice of $\rho_1$ and $\rho_2)$)
\begin{equation}\label{18041}
\begin{split}
J(u_1,u_2) & \ge I_1(s_{u_1} \star u_1) + I_2(s_{u_1} \star u_2) \\
& \ge \ell(a_1,\mu_1) + C \|s_{u_1} \star u_2\|_{\D^{1,2}}^2 \ge \ell(a_1,\mu_1) + C \frac{\rho_2^2}{4}
\end{split}
\end{equation}
for every $(u_1,u_2) \in  \left( B(w_1,\rho_1; H^1) \times \pa B(0,\rho_2;\mathcal{D}^{1,2}) \right) \cap \mathcal{P}$. On the other hand, using again \eqref{04041} we deduce also that
\[
J(u_1,u_2) \ge I(s_{u_1} \star u_1).
\]
for every $(u_1,u_2) \in  \left( \pa B(w_1,\rho_1; H^1) \times  B(0,\rho_2;\mathcal{D}^{1,2}) \right) \cap \mathcal{P}$. We claim that
\begin{equation}\label{claim 1804}
\inf \left\{ I_1(s_{u_1} \star u_1): (u_1,u_2) \in  \left( \pa B(w_1,\rho_1; H^1) \times  B(0,\rho_2;\mathcal{D}^{1,2}) \right) \cap \mathcal{P}\right\} > \ell(a_1,\mu_1).
\end{equation}
Indeed, the Lemmas \ref{lem: 0104} and \ref{lem: last quantification} yield
\[
\frac{\rho_1}{2} \le \|s_{u_1} \star u_1-w_1\|_{H^1} < \bar \rho_1
\]
for every $(u_1,u_2) \in  \left( \pa B(w_1,\rho_1; H^1) \times  B(0,\rho_2;\mathcal{D}^{1,2}) \right) \cap \mathcal{P}$, so that the left hand side in \eqref{claim 1804} is larger than or equal to
\[
\inf \left\{ I_1(u) : \frac{\rho_1}{2} \le \|u-w_1\|_{H^1} < \bar \rho_1, \ u \in \mathcal{M}_1 \right\}.
\]
If by contradiction this infimum is $\ell(a_1,\mu_1)$, then there exists a bounded sequence $\{u_n\} \subset \mathcal{M}_1$ with $\|u_n -w_1\|_{H^1} \ge \rho_1/2$ such that $I_1(u_n) \to \ell(a_1,\mu_1)$; that is, $\{u_n\}$ is a bounded minimizing sequence for $I_1$ restricted to $\mathcal{M}_1$. By Lemma \ref{PS on M cubic} we infer that $u_n \to u$ strongly in $H^1(\R^3)$, where by strong convergence $u$ minimizes $I_1$ on $\mathcal{M}_1$. Notice that
\[
\|u-w_1\|_{H^1} \le \bar \rho_1 \le \|w_1\|_{H^1} < \|w_1-(-w_1)\|_{H^1};
\]
this rules out the possibility that $u=-w_1$, so that by Proposition \ref{prop: uniqueness minimizer} necessarily $u = w_1$. But on the other hand, always by strong convergence, $\|u-w_1\|_{H^1} \ge \rho_1/2$, a contradiction. This proves claim \eqref{claim 1804}, which together with \eqref{18041} and \eqref{0404} gives the thesis.
\end{proof}

In order to complete the proof of Proposition \ref{prop: existence PS}, the idea is now to define a convenient minimax class of paths connecting two pairs $(u_1,u_2)$ and $(v_1,v_2)$, sufficiently close to $(w_1,0)$ and to $(0,w_2)$ respectively. The problem is that, at least for $\beta <-\sqrt{\mu_1 \mu_2}$, it is not clear whether the set $\mathcal{P}$ is connected by arcs, and in particular it is not clear if an arc connecting $(u_1,u_2)$ and $(v_1,v_2)$ does exists. In the next lemma we conveniently choose $(u_1,u_2)$ and $(v_1,v_2)$ so that they lie in the same connected component of $\mathcal{P}$.

\begin{lemma}\label{lem: estremi}
Let $\rho_1,\rho_2$ be defined in Lemma \ref{lem: minimax inequality}. For every $\eps>0$ there exist
\begin{align*}
(u_1^\eps,u_2^\eps) \in \left( B(w_1,\rho_1; H^1) \times B(0,\rho_2; \mathcal{D}^{1,2}) \right) \cap \mathcal{P}\\
(v_1^\eps,v_2^\eps) \in \left( B(0,\rho_1; \D^{1,2}) \times B(w_2,\rho_2; H^1) \right) \cap \mathcal{P}
\end{align*}
with the following properties:
\begin{itemize}
\item[($i$)] $(u_1^\eps,u_2^\eps), (v_1^\eps,v_2^\eps) \in \mathcal{C}^\infty_c(\R^3,\R^2)$ and $u_i^\eps, v_i^\eps \ge 0$ in $\R^3$ for both $i=1,2$.
\item[($ii$)] $u_1^\eps u_2^\eps \equiv 0$,  $v_1^\eps v_2^\eps \equiv 0$, and $u_2^\eps v_1^\eps \equiv 0$.
\item[($iii$)] $J(u_1^\eps,u_2^\eps), J(v_1^\eps,v_2^\eps) \le \ell(a_1,\mu_1) + \eps$.
\item[($iv$)] There exists $\gamma=(\gamma_1,\gamma_2):[0,1] \to \mathcal{P}$, continuous with respect to the $H^1$-topology, such that $\gamma(0)=(u_1^\eps,u_2^\eps)$, $\gamma(1) = (v_1^\eps,v_2^\eps)$, and moreover $\gamma_1(t) \gamma_2(t) \equiv 0$ in $\R^N$ for every $t \in [0,1]$.
\end{itemize}
\end{lemma}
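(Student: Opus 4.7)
The strategy is to realize $(u_1^\eps,u_2^\eps)$ and $(v_1^\eps,v_2^\eps)$ as small perturbations of $(w_1,0)$ and $(0,w_2)$ living on pairwise disjoint compact radial supports, and then to connect them by a path in $\mathcal{P}$ that ``cycles'' the mass through a sequence of disjoint radial strata. The key structural fact used throughout is that the $\star$-action \eqref{def star} preserves both the $L^2$-norms of the individual components and the disjointness of supports; moreover, on a pair with disjoint supports the cross term in $G$ vanishes, so such a pair lies in $\mathcal{E}$ and Lemma \ref{lem: structure P} yields a uniquely determined $s_{\mf{u}}$ with $s_{\mf{u}}\star\mf{u}\in\mathcal{P}$.

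\textbf{Construction of the endpoints.} Let $\eta\in\mathcal{C}^\infty_c(\R^3)$ be radial with $\eta(x)=1$ for $|x|\le 1$ and $\eta(x)=0$ for $|x|\ge 2$, and let $\varphi\in\mathcal{C}^\infty_c(\R^3)$ be radial and nonnegative with $\supp\varphi\subset\{1<|x|<2\}$ and $\|\varphi\|_{L^2}=a_2$. Define $\tilde u_1:=a_1\,\eta(\cdot/K)w_1/\|\eta(\cdot/K)w_1\|_{L^2}$ and $\tilde u_2(x):=R^{-3/2}\varphi(x/R)$, with $R>2K$, so that $\tilde u_1\tilde u_2\equiv 0$. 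As $K,R\to\infty$ we have $\tilde u_1\to w_1$ in $H^1$ and $\|\tilde u_2\|_{\D^{1,2}}=R^{-1}\|\nabla\varphi\|_{L^2}\to 0$; by disjointness the scaling $s=s(\tilde{\mf{u}})$ from \eqref{def s_u} reduces to a quotient of intra-species integrals only and tends to $0$. Setting $(u_1^\eps,u_2^\eps):=s\star(\tilde u_1,\tilde u_2)$, Lemma \ref{lem: s strong convergence} gives $(u_1^\eps,u_2^\eps)\to(w_1,0)$ in $H^1\times\D^{1,2}$, and since $u_1^\eps u_2^\eps\equiv 0$ one computes $J(u_1^\eps,u_2^\eps)=I_1(u_1^\eps)+I_2(u_2^\eps)\to\ell(a_1,\mu_1)$. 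Hence for $K,R$ sufficiently large the pair lies in the prescribed $H^1\times\D^{1,2}$ neighbourhood and satisfies (iii). The second pair $(v_1^\eps,v_2^\eps)$ is built symmetrically, with $\tilde v_1$ supported in an annulus $\{B<|x|<2B\}$ for $B$ so large that, after the two scalings, $u_2^\eps v_1^\eps\equiv 0$ also holds; (iii) for $(v_1^\eps,v_2^\eps)$ uses the assumption \eqref{hp sui livelli}.

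\textbf{Construction of the path.} This is the main difficulty, because both $u_1^\eps$ and $v_2^\eps$ concentrate near the origin, so the two components must swap their ``inner'' and ``outer'' positions without ever overlapping. I choose four pairwise disjoint radial strata $\mathcal{A}_0,\mathcal{A}_1,\mathcal{A}_2,\mathcal{A}_3$ (a ball and three annuli) ordered by distance from $0$, with $\supp u_1^\eps,\supp v_2^\eps\subset\mathcal{A}_0$, $\supp u_2^\eps\subset\mathcal{A}_1$, and $\supp v_1^\eps\subset\mathcal{A}_3$, and fix a smooth radial nonnegative $\chi$ with $\|\chi\|_{L^2}=a_1$ and $\supp\chi\subset\mathcal{A}_2$. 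If $u,v\in\mathcal{C}^\infty_c$ are nonnegative with disjoint supports and equal $L^2$-norm, then the convex combination $\cos\theta\,u+\sin\theta\,v$ preserves that common $L^2$-norm. Using this I define $\tilde\gamma:[0,1]\to S_{a_1}\times S_{a_2}$ in three phases: on $[0,1/3]$ move $\tilde\gamma_1$ from $u_1^\eps$ to $\chi$ keeping $\tilde\gamma_2=u_2^\eps$ (supports in $\mathcal{A}_0\cup\mathcal{A}_2$ vs.\ $\mathcal{A}_1$); on $[1/3,2/3]$ keep $\tilde\gamma_1=\chi$ and move $\tilde\gamma_2$ from $u_2^\eps$ to $v_2^\eps$ (supports in $\mathcal{A}_2$ vs.\ $\mathcal{A}_0\cup\mathcal{A}_1$); on $[2/3,1]$ keep $\tilde\gamma_2=v_2^\eps$ and move $\tilde\gamma_1$ from $\chi$ to $v_1^\eps$ (supports in $\mathcal{A}_2\cup\mathcal{A}_3$ vs.\ $\mathcal{A}_0$). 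In each phase the two supports lie in disjoint strata, so $\tilde\gamma_1(t)\tilde\gamma_2(t)\equiv 0$ throughout, and $\tilde\gamma$ is $H^1$-continuous, nonnegative, and takes values in $\mathcal{C}^\infty_c\cap(S_{a_1}\times S_{a_2})$.

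\textbf{Projection to $\mathcal{P}$.} Since $\tilde\gamma(t)\in\mathcal{E}$ for every $t$ (disjoint supports force $\sum_{i,j}\beta_{ij}\int\tilde\gamma_i^2\tilde\gamma_j^2=\mu_1\|\tilde\gamma_1\|_{L^4}^4+\mu_2\|\tilde\gamma_2\|_{L^4}^4>0$), Lemma \ref{lem: structure P} produces a continuous function $t\mapsto s(\tilde\gamma(t))$, and I set $\gamma(t):=s(\tilde\gamma(t))\star\tilde\gamma(t)\in\mathcal{P}$; continuity of $\gamma$ in $H^1$ follows from Lemma \ref{lem: s strong convergence}. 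The $\star$-action acts as a simultaneous radial dilation on both components, so disjointness of supports is preserved and $\gamma_1(t)\gamma_2(t)\equiv 0$ for every $t$. At the endpoints $\tilde\gamma(0),\tilde\gamma(1)\in\mathcal{P}$, hence $s(\tilde\gamma(0))=s(\tilde\gamma(1))=0$ and $\gamma$ agrees with $\tilde\gamma$ there, giving (iv). Properties (i)-(iii) are preserved by the construction.
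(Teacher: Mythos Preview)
Your argument is correct and follows the same overall strategy as the paper: build compactly supported, nonnegative, radially separated approximants of $(w_1,0)$ and $(0,w_2)$, project them onto $\mathcal{P}$ via Lemma~\ref{lem: structure P}, and then connect them by a path in $\mathcal{E}$ whose components have disjoint supports at every time, projecting the whole path to $\mathcal{P}$ with the $\star$-action.

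The main difference is in the construction of the connecting path. The paper uses only \emph{two} linear segments: first interpolate the first component from $u_1^\eps$ to $v_1^\eps$ (both already disjoint from $u_2^\eps$, since $v_1^\eps$ is placed in the outermost annulus), keeping the second component fixed at $u_2^\eps$; then interpolate the second component from $u_2^\eps$ to $v_2^\eps$, keeping the first fixed at $v_1^\eps$. Because $v_1^\eps$ is disjoint from \emph{all} the other supports, this two-phase path never develops overlap. Your three-phase scheme with the auxiliary waypoint $\chi$ and the extra stratum $\mathcal{A}_2$ works perfectly well but is unnecessary: in your own layout $u_1^\eps\subset\mathcal{A}_0$ and $v_1^\eps\subset\mathcal{A}_3$ are both already disjoint from $u_2^\eps\subset\mathcal{A}_1$, so you could have moved the first component directly from $u_1^\eps$ to $v_1^\eps$ in one step. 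Your $\cos\theta/\sin\theta$ interpolation (which preserves the $L^2$-norm thanks to the disjoint supports) is a valid alternative to the paper's $L^2$-normalized convex combination; neither choice offers a real advantage over the other here. A minor point: the order of parameter choices (first $K$ and the analogous cutoff radius for $v_2^\eps$, then $R$, then $B$) should be made explicit so that $\supp v_2^\eps\subset\mathcal{A}_0$ is indeed guaranteed, but this is easily arranged.
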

\begin{proof}
Let $\eps>0$ be fixed. Arguing as in the proof of Theorem \ref{thm: natural}-($ii$), we can check that there exists $\{u_{1,n}\} \subset \mathcal{C}^\infty_c(\R^3) \cap \mathcal{M}_1$ strongly convergent to $w_1$ in $H^1(\R^3)$ as $n \to \infty$; moreover, since $w_1 > 0$ in $\R^3$, it is not restrictive to suppose that $u_{1,n} \ge 0$ in $\R^3$ for every $n$ sufficiently large. By continuity, we can take $u_{1,\bar n}$ with $\bar n$ very large, so that
\begin{equation}\label{def u bar n}
I_1(u_{1,\bar n}) < \ell(a_1,\mu_1) + \frac{\eps}{2}.
\end{equation}
The support of $u_{1,\bar n}$ is contained in $B_R(0)$ for some positive $R>0$.

Let us now consider $u \in S_{a_2}^r$, $u \ge 0$ in $\R^3$, with support in $A(0;2,3)$, the annulus of center $0$ and radii $2<3$, and define
\[
u_{2,m}(x):=((-m) \star u)(x) = e^{-3m/2} u(e^{-m}x).
\]
Then $u_{2,m} \to 0$ strongly in $\D^{1,2}$ as $m \to \infty$, and $\supp u_{2,m} \subset A(0; 2e^{m},3e^{m})$, as
\begin{align*}
\supp u_{2,m} = \{ 2<|e^{-m} x|<3 \} = \{ 2 e^{m} < |x| < 3 e^m\} \\
\| u_{2,m} \|_{\D^{1,2}}^2 = e^{-2m} \|u\|_{\D^{1,2}}^2 \to 0 \qquad \text{as $m \to \infty$}.
\end{align*}
In particular, there exists $\bar m$ very large so that
\[
\text{$\supp u_{2,m} \cap B_R(0) =\emptyset$ for every $m \ge \bar m$}.
\]
Let $s_m:= s_{(u_{1,\bar n},u_{2,m})}$ be defined by Lemma \ref{lem: structure P} (we remark that $s_m$ is well defined, since by construction $(u_{1,\bar n},u_{2,m}) \in \mathcal{E}$, with $\mathcal{E}$ defined in \eqref{def cone}). Since
\[
u_{1,\bar n} \in \mathcal{M}_1 \quad \Longrightarrow \quad \frac{4 \int_{\R^3} |\nabla u_{1,\bar n}|^2}{3 \int_{\R^3} \beta_{11} u_{1,\bar n}^4} = 1
\]
and $u_{2,m} \to 0$ in $\D^{1,2}$, it is possible to repeat step by step (with minor changes) the computations between \eqref{19024} and \eqref{19023}, obtaining
\[
s_m = O(\|u_{2,m}\|_{\D^{1,2}}^{3/2}) \qquad \text{as $m \to \infty$}.
\]
Now, $s_m \star u_{2,m} \to 0$ strongly in $\mathcal{D}^{1,2}$ as $m \to +\infty$, since
\[
\|s_m \star u_{2,m}\|_{\D^{1,2}}= e^{s_m} \|u_{2,m}\|_{\D^{1,2}}.
\]
Therefore, by continuity (with respect to the $\D^{1,2}$ topology)
\[
I_2(s_m \star u_{2,m}) < \frac{\eps}{2}
\]
for any $m \ge  \tilde m$, with $\tilde m \ge \bar m$ sufficiently large. Observing that by construction $s_m\star u_{1,\bar n}$ and $s_m \star u_{2,m}$ have disjoint support,
that $I_1(s_m \star u_{1,\bar n}) < I_1(u_{1,\bar n})$ (see Lemma \ref{lem: s_u scalar}), and recalling \eqref{def u bar n}, we finally conclude 
\[
\begin{split}
J(s_m \star (u_{1,\bar n},u_{2,m})) &= I_1(s_m \star u_{1,\bar n}) + I_2(s_m \star u_{2,m})  \\
& < I_1(u_{1,\bar n}) +  \frac{\eps}{2}
 < \ell(a_1,\mu_1) + \eps
\end{split}
\]
for any $m \ge  \tilde m$. We set $(u_1^\eps,u_2^\eps) := \tilde m \star (u_{1,\bar n}, u_{2,\tilde m})$, and for future convenience we denote by $R_\eps$ a radius such that $\supp u_{2,\eps} \subset B_{R_\eps}(0)$.

\smallskip

The existence of $(v_1^\eps,v_2^\eps)$ can be proved in a similar way. We first take $v_{2,\bar n} \in \mathcal{C}^\infty_c(\R^3) \cap \mathcal{M}_2$ close to $w_2$ in $H^1$, supported in $B_{R'}(0)$ for some $R'>0$. Then we set $v_{1,m} := (-m) \star v$, where $v \in S_{a_1}^r$ is a function with $\supp v \subset A(0;2,3)$. There exists $\bar m$ so large that
\[
\supp v_{1,m} \cap B_{R'}(0)= \emptyset \quad \text{and also} \quad   \supp v_{1,m} \cap B_{R_\eps}(0)= \emptyset
\]
for every $m>\bar m$. Now we can proceed exactly as before, obtaining in the end a pair $(v_1^\eps,v_2^\eps)$ with
\[
J(v_1^\eps,v_2^\eps) < \ell(a_2,\mu_2) + \eps \le \ell(a_1,\mu_1) + \eps
\]
(recall condition \eqref{hp sui livelli}).

\smallskip

So far we proved the existence of $(u_1^\eps,u_2^\eps)$ and $(v_1^\eps,v_2^\eps)$ satisfying ($i$)-($iii$) of the thesis. It remains to prove ($iv$). To this purpose, it is sufficient to find a path $\tilde \gamma:[0,1] \to \mathcal{E}$ with $\tilde \gamma(0)=(u_1^\eps,u_2^\eps)$, $\tilde \gamma(1)=(v_1^\eps,v_2^\eps)$, and $\tilde \gamma_1(t)\tilde  \gamma_2(t) \equiv 0$ in $\R^N$ for every $t \in [0,1]$, where $\mathcal{E}$ was defined in \eqref{def cone}. Indeed if such a $\tilde \gamma$ does exist, then the path $\gamma(t) := s_{\tilde \gamma(t)} \star \tilde \gamma(t)$ satisfies all the properties in point ($iv$) of the lemma. For the continuity, we observe that $(u_{1,n},u_{2,n}) \to (u_1,u_2)$ in $H^1(\R^3)$ implies $s_n := s_{(u_{1,n},u_{2,n})} \to s_{(u_1,u_2)}=: s_\infty$. Thus, Lemma \ref{lem: s strong convergence} yields
\begin{align*}
\|  s_n \star u_{1,n} -s_\infty \star u_1 \|_{H^1} \to 0
\end{align*}
as $n \to \infty$, and the same holds for the second component.

In order to define $\tilde \gamma$, we set
\[
\sigma_1(t):= \left(a_1 \frac{(1-t)u_1^{\eps}  + t v_1^\eps}{\|(1-t)u_1^{\eps}  + t v_1^\eps\|_{L^2}}, u_2^\eps\right) \qquad t \in [0,1]
\]
Since ($ii$) of this lemma holds true, $\sigma_1(t) \in \mathcal{E}$ and $\sigma_{1,1}(t) \sigma_{1,2}(t) \equiv 0$ in $\R^N$ for every $t \in [0,1]$. Now we set
\[
\sigma_2(t):= \left( v_1^\eps,  a_2 \frac{(1-t)u_2^{\eps}  + t v_2^\eps}{\|(1-t)u_2^{\eps}  + t v_2^\eps\|_{L^2}} \right) \qquad t \in [0,1],
\]
and again we note that $\sigma_2(t) \in \mathcal{E}$ and $\sigma_{2,1}(t) \sigma_{2,2}(t) \equiv 0$ in $\R^N$ for every $t \in [0,1]$. The path
\[
\tilde \gamma(t):= \begin{cases} \sigma_1(2t) & t \in \left[0,\frac12\right] \\ \sigma_2(2t-1) & t \in \left[\frac12,1\right]
\end{cases}
\]
is then a continuous path on $\mathcal{E}$ connecting $(u_1^\eps,u_2^\eps)$ with $(v_1^\eps,v_2^\eps)$, and such that $\tilde \gamma_{1}(t) \tilde \gamma_{2}(t) \equiv 0$ in $\R^N$ for every $t \in [0,1]$.
\end{proof}

\begin{proof}[Proof of Proposition \ref{prop: existence PS}]
Let $\rho_1$, $\rho_2$ and $\bar C$ be defined in Lemma \ref{lem: minimax inequality}, and let $0<\eps< \bar C/2$.
For such an $\eps>0$, thanks to Lemma \ref{lem: estremi} we find $(u_1^\eps,u_2^\eps)$ and $(v_1^\eps,v_2^\eps)$. Let now $\bar{\mathcal{P}}$ be the
connected component of $\mathcal{P}$ containing $(u_1^\eps,u_2^\eps)$ and $(v_1^\eps,v_2^\eps)$ (the existence of $\bar{\mathcal{P}}$ follows by Lemma \ref{lem: estremi}-($iv$)). Recalling Lemma \ref{lem: manifold}, we have that $\bar{\mathcal{P}}$ is a complete connected
$\C^1$ manifold without boundary. We introduce the minimax class
\[
\Gamma:= \left\{ \gamma \in \mathcal{C}\left([0,1],\bar{\mathcal{P}}\right), \gamma(0) =  (u_1^\eps,u_2^\eps), \ \gamma(1) = (u_2^\eps,u_2^\eps) \right\},
\]
and the associated minimax level
\[
c:= \inf_{\gamma \in \Gamma} \max_{ t \in [0,1]} J(\gamma(t)).
\]
It is clear that for any $\gamma \in \Gamma$ there exists $t \in (0,1)$ such that
\[
\gamma(t) \in \pa \left(B(w_1,\rho_1; H^1) \cap B(0,\rho_2; \D^{1,2} ) \right) \cap \mathcal{P},
\]
so that Lemma \ref{lem: minimax inequality} and the choice of $\eps$ permit to apply the minimax principle \cite[Theorem 3.2]{Ghou}: we deduce that for every minimizing sequence $\{\gamma_n\} \subset \Gamma$ for $c$, there exists a Palais-Smale sequence $\{(\tilde u_{1,n},\tilde u_{2,n})\}$ of $J$ on $\mathcal{P}$ at level $c$, such that
\begin{equation}\label{dist to 0}
\dist_{H^1}( (\tilde u_{1,n}, \tilde u_{2,n}), \gamma_n([0,1]) ) \to 0 \qquad \text{as $n \to \infty$}.
\end{equation}
Since $J$ and $G$ (see \eqref{def G}) are even and $(u_1^\eps,u_2^\eps)$ and $(v_1^\eps,v_2^\eps)$ have both non-negative components, we claim that it is not restrictive to suppose that $\gamma_{1,n}(t),\gamma_{2,n} (t)\ge 0$ a.e. in $\R^3$, for every $n$, for every $t \in [0,1]$. To prove the claim, we show that if $\gamma \in \Gamma$, then also $|\gamma|:= (|\gamma_{1}|,|\gamma_{2}|) \in \Gamma$. It is clear that $|\gamma|$ is continuous and $|\gamma(t)| \in \mathcal{P}$, but we have to prove the stronger assertion $|\gamma(t)| \in \bar{\mathcal{P}}$. Let us define, for $t \in [0,1]$,
\[
\sigma_1(\tau):= \gamma_n((1-\tau)t ) \quad \text{and} \quad \sigma_2(\tau):= |\gamma_n(\tau t)|,
\]
with $\tau \in [0,1]$. Setting
\[
\sigma(\tau,t):= \begin{cases} \sigma_1(\tau t) & \tau \in \left[0,\frac12\right] \\
\sigma_2((2\tau-1)t ) & \tau \in \left[\frac12,1\right],
\end{cases}
\]
we have a continuous path in $\mathcal{P}$ connecting $\gamma(t)$ with $|\gamma(t)|$. Hence $\gamma(t)$ and $|\gamma(t)|$ live in the same connected connected component of $\mathcal{P}$. Since this holds for every $t$, we conclude that $|\gamma| \in \Gamma$, as desired.

The fact that $\gamma_{1,n}(t), \gamma_{2,n}(t) \ge 0$ a.e. in $\R^3$, together with \eqref{dist to 0}, imply that $\tilde u_{i,n}^- \to 0$ a.e.\ in $\R^3$. The rest of the proposition follows now by Theorem \ref{thm: natural}, with the exception of the uniform boundedness of $c=c_\beta$ with respect to $\beta$. To this purpose, let us denote by $\mathcal{P}_\beta$ the natural constraint defined in \eqref{definition Pohozaev} for a prescribed value of $\beta$, and by $J_\beta$ the associated energy functional. Let us consider the path $\gamma$ constructed in Lemma \ref{lem: estremi}. Since $\gamma_1(t) \gamma_2(t) \equiv 0$ in $\R^N$ for every $t \in [0,1]$, we have that $\gamma(t) \in \mathcal{P}_\beta$ for every $t \in [0,1]$, for every $\beta<0$. As a consequence, by definition
\[
c_\beta \le  \max_{t \in [0,1]} J_\beta(\gamma(t)) =:C
\]
with $C>0$ independent of $\beta$. This completes the proof.
\end{proof}

\subsection{Convergence of the Palais-Smale sequence}

In order to complete the proof of Theorem \ref{thm: beta<0}, we have to show that the Palais-Smale sequence $\{( u_{1,n},u_{2,n})\}$ strongly converges in $H^1(\R^3,\R^2)$ to a couple $( \bar u_1,\bar u_2)$, solution of \eqref{system} for suitable $\bar \lambda_1$, $\bar \lambda_2<0$. Once that this is done, we observe that by strong convergence $(\bar u_1,\bar u_2)$ fulfills also \eqref{normalization}, and hence $(\bar u_1,\bar u_2,\lambda_1,\lambda_2)$ is a solution to \eqref{complete problem}, as desired.

For the strong convergence, we adapt the argument used in the last part of the proofs of Theorems 1.1 and 1.2 in \cite{BaJeSo}. Since $(u_{1,n},u_{2,n}) \in \mathcal{P}$, arguing as in \cite[Lemma 3.7]{BaJeSo} we deduce that $\{( u_{1,n},u_{2,n})\}$ is bounded in $H^1(\R^3,\R^2)$, and moreover there exists $C>0$ such that
\[
\int_{\R^3} |\nabla u_{1,n}|^2 + |\nabla u_{2,n}|^2 \ge C \qquad \text{for all $n$}.
\]
Hence, up to a subsequence $(u_{1,n},u_{2,n}) \wc (\bar u_1,\bar u_2)$ weakly in $H^1$, strongly in $L^4$, and a.e. in $\R^3$ (we recall that the embedding $H^1_{\rad}(\R^3) \hookrightarrow L^4(\R^3)$ is compact), and in particular $\bar u_1, \bar u_2 \ge 0$ a.e. in $\R^3$. Since $dJ |_{S_{a_1} \times S_{a_2}}(u_{1,n},u_{2,n}) \to 0$, by the Lagrange multipliers rule
there exist two sequences of real numbers $(\lambda_{1,n})$ and $(\lambda_{2,n})$ such that
\begin{multline}\label{weak sol}
\sum_i \int_{\R^3}  \nabla u_{i,n} \cdot \nabla \varphi_{i} -\lambda_{i,n} u_{i,n} \varphi \\
 - \sum_{i,j} \int_{\R^3} \beta_{ij} u_{i,n} u_{j,n} (u_{i,n} \varphi_j + \varphi_i u_{j,n}) = o(1) \|(\varphi_1,\varphi_2)\|_{H^1}
\end{multline}
for every $(\varphi_1, \varphi_2) \in H^1(\R^3,\R^2)$, with $o(1) \to 0$ as $n \to \infty$. For more details we refer to \cite[Lemma 3.2]{BarJea}. In light of \eqref{weak sol}, we can check as in \cite[Lemma 3.8]{BaJeSo} that up to a subsequence $\lambda_{i,n} \to \lambda_i \in \R$ for $i=1,2$, and at least one limit value, say $\lambda_1$, is strictly negative. Moreover, thanks to \cite[Lemma 3.9]{BaJeSo}, we know that if $\lambda_i<0$, then necessarily $u_{i,n} \to \bar u_i$ strongly in $H^1$. Hence, to complete the proof of the strong convergence $(u_{1,n},u_{2,n}) \to (\bar u_1,\bar u_2)$, it remains to show that also $\lambda_2$ is negative. In \cite{BaJeSo} we used in a decisive way the assumption $\beta>0$, and hence we have to modify our argument in the following way. First, we notice that by \eqref{weak sol} and weak convergence $(\bar u_1,\bar u_2)$ is a (weak, and by regularity classical) solution to
\begin{equation}\label{limit problem}
\begin{cases}
-\Delta \bar u_i -\lambda_i \bar u_i = \sum_{i,j} \beta_{ij} \bar u_i \bar u_j^2 & \text{in $\R^3$} \\
u_i \ge 0 & \text{in $\R^3$}.
\end{cases}
\end{equation}
Being $\lambda_1<0$, we deduce the following decay property for $\bar u_1$.

\begin{lemma}\label{exp decay}
There exists $\alpha,\gamma>0$ such that
\[
\bar u_1(x) \le \alpha e^{-\sqrt{1+ \gamma|x|^2}}  \qquad \text{for every $x \in \R^3$}.
\]
\end{lemma}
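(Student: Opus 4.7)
My plan is to obtain the claimed pointwise estimate by a standard barrier–comparison argument on the complement of a large ball, exploiting the strict negativity of $\lambda_1$ and the sign of $\beta$.

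First, I would record that $\bar u_1$ is a nonnegative radial solution of \eqref{limit problem} belonging to $H^1(\R^3)$, so elliptic regularity together with the radial Sobolev embedding (Strauss' lemma) give $\bar u_1\in L^\infty(\R^3)$ with $\bar u_1(x)\to 0$ as $|x|\to\infty$ (and analogously for $\bar u_2$). Rewriting the equation as $-\Delta \bar u_1 + |\lambda_1|\bar u_1 = \mu_1 \bar u_1^3 + \beta \bar u_1 \bar u_2^2$ and using $\beta<0$, $\bar u_2^2\ge 0$, $\bar u_1\ge 0$, I get
\[
-\Delta \bar u_1 + \bigl(|\lambda_1| - \mu_1 \bar u_1^2\bigr)\bar u_1 \le 0 \qquad \text{in }\R^3.
\]
Since $\bar u_1$ vanishes at infinity, I can fix $R>0$ large enough that $\mu_1 \bar u_1^2(x)\le |\lambda_1|/2$ for every $|x|\ge R$, which yields the linear differential inequality
\[
-\Delta \bar u_1 + \tfrac{|\lambda_1|}{2}\bar u_1 \le 0 \qquad \text{in }\{|x|>R\}.
\]

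Next, set $\gamma:=|\lambda_1|/2$ and define the barrier $\phi(x):=e^{-s(x)}$ with $s(x):=\sqrt{1+\gamma|x|^2}$. A short explicit computation in radial coordinates (using $s'(\rho)=\gamma\rho/s$ and $s''(\rho)=\gamma/s^3$ with $\rho=|x|$, $N=3$) gives
\[
-\Delta \phi + \gamma \phi = \gamma \bigl(s^{-2}+s^{-3}+2s^{-1}\bigr)\phi \ge 0 \qquad \text{in }\R^3,
\]
so $\phi$ is a smooth positive supersolution of $-\Delta u + \gamma u = 0$ that decays like $e^{-\sqrt{\gamma}|x|}$. Choosing $\alpha>0$ so large that $\alpha\phi \ge \|\bar u_1\|_{L^\infty}$ on $\overline{B_R}$ (possible because $\phi$ is bounded below on the compact ball), the difference $v:=\alpha\phi-\bar u_1$ satisfies $-\Delta v + \gamma v \ge 0$ in $\{|x|>R\}$, $v\ge 0$ on $\partial B_R$, and $v(x)\to 0$ as $|x|\to\infty$. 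The weak maximum principle on the exterior domain, which applies because the zero-order coefficient $\gamma$ is strictly positive, forces $v\ge 0$ in $\{|x|\ge R\}$; this is exactly the claimed bound outside $B_R$, while inside $B_R$ the inequality $\bar u_1\le\alpha\phi$ is built into the choice of $\alpha$.

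The argument is classical, so I do not foresee a genuine obstacle: the three steps that need mild care are the decay $\bar u_1(x)\to 0$ (where radial symmetry enters), the explicit verification that $\phi$ is a supersolution of $-\Delta u +\gamma u=0$ on all of $\R^3$ (routine but worth doing by hand, since one wants the inequality to hold globally in order to sit $\phi$ above $\bar u_1$ also on $B_R$), and the invocation of the exterior maximum principle for $-\Delta + \gamma$ with $\gamma>0$, which is standard once one knows that the solution vanishes at infinity.
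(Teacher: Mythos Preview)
Your proof is correct and follows essentially the same approach as the paper: both arguments use the radial decay $\bar u_1(x)\to 0$ to obtain a linear differential inequality $-\Delta \bar u_1 + \tfrac{|\lambda_1|}{2}\bar u_1 \le 0$ outside a large ball, construct the same barrier $e^{-\sqrt{1+\gamma|x|^2}}$, verify it is a supersolution of $-\Delta+\gamma$, and conclude by comparison on the exterior domain. The only cosmetic differences are that the paper takes $\gamma<|\lambda_1|/2$ strictly (you take equality, which also works since you verify the supersolution inequality directly) and that the paper justifies the final comparison by testing with $(z-\bar u_1)^-$ rather than citing the exterior maximum principle.
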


\begin{proof}
It is well known that radially symmetric $H^1$ continuous functions uniformly converge to $0$ as $|x| \to +\infty$, see e.g. \cite[Lemma 3.1.2]{BaSe}. Thus, we observe that
\[
-\Delta \bar u_1 + q(x) \bar u_1 = 0 \qquad \text{in $\R^3$},
\]
where
\[
q(x) = -\lambda_1 -\beta \bar u_1 \bar u_2^2 -\mu_1 \bar u_1^3 \ge \frac{|\lambda_1|}{2}   \qquad \text{for $|x| > M$},
\]
provided $M$ is sufficiently large. Let $\alpha >0$ to be determined, $\gamma \in (0,|\lambda_1|/2)$, and
\[
z(x):= \alpha e^{-\sqrt{1+\gamma|x|^2}}.
\]
By direct computations
\[
-\Delta z + \gamma z \ge 0 \qquad \text{in $\R^N$},
\]
so that
\[
-\Delta (z-\bar u_1) + \gamma(z-\bar u_1) \ge \left( \frac{|\lambda_1|}{2} - \gamma \right) \bar u_1 \ge 0 \qquad \text{for $|x|>M$}.
\]
We can also choose $\alpha$ so large that $z \ge \bar u_1$ for $|x| \le M$. Therefore, testing the previous inequality with $(z-\bar u_1)^-$, we deduce that $\bar u_1 \le z$ in $\R^N$.
\end{proof}

Now we focus on the equation satisfied by $\bar u_2$:
\[
\begin{cases}
-\Delta \bar u_2 = \lambda_2 \bar u_2 + \mu_2 \bar u_2^3 - c(x) \bar u_2 & \text{in $\R^3$} \\
\bar u_2 \ge 0 & \text{in $\R^3$},
\end{cases}
\]
with $c(x) = |\beta| \bar u_1^2(x) \ge 0$.
If $\lambda_2 \ge 0$, we have then that $\bar u_2$ satisfies in weak sense a problem of type
\begin{equation}\label{pb Liouv}
\begin{cases}
-\Delta w +c(x) w \ge 0 & \text{in $\R^3$} \\
w \ge 0 & \text{in $\R^3$} \\
w \in H^1(\R^3),
\end{cases} \quad \text{with} \quad 0 \le c(x) \le C e^{-C|x|}
\end{equation}
for some $C>0$. We need a Liouville-type theorem for this class of problems.

\begin{lemma}\label{lem: Liouville}
If $w$ satisfies \eqref{pb Liouv}, then $w \equiv 0$ in $\R^3$.
\end{lemma}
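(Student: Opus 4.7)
The plan is to upgrade the distributional inequality $(-\Delta + c)w \ge 0$ to an equality $(-\Delta + c)w = 0$. Once this is established, testing against $w$ (a valid test function since $w \in H^1(\R^3)$) yields
\[
\int_{\R^3}|\nabla w|^2 + \int_{\R^3}cw^2 = 0,
\]
and nonnegativity of the two integrands forces $\nabla w \equiv 0$; hence $w$ is a constant, and being in $L^2(\R^3)$ it must vanish identically.

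To promote the inequality to an equality, set $\nu := (-\Delta + c)w$, which by hypothesis is a nonnegative Radon measure on $\R^3$. I would test it against the smooth compactly supported cutoffs $\chi_R^2$, where $\chi_R(x) := \eta(|x|/R)$ with $\eta \in C^\infty([0,\infty))$ nonincreasing, $\eta\equiv 1$ near $0$, and $\supp \eta \subset [0,2]$. Integration by parts gives
\[
\nu(\chi_R^2) = \int_{\R^3} w\,(-\Delta\chi_R^2)\,dx + \int_{\R^3} cw\,\chi_R^2\,dx.
\]
Since $|{-\Delta\chi_R^2}| \le C/R^2$ on the annulus $\{R < |x| < 2R\}$, Cauchy-Schwarz bounds the first term by $(C/\sqrt R)\|w\|_{L^2(\{|x|>R\})}$, which tends to $0$, while the second converges by monotone convergence to $\int_{\R^3} cw$, finite because $c$ decays exponentially and $w \in L^2(\R^3)$. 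Applying monotone convergence on the left-hand side as $R \to \infty$ then gives the identity $\nu(\R^3) = \int cw < \infty$, so $\nu$ is a finite nonnegative measure.

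With the Newton kernel $G(x) = (4\pi|x|)^{-1}$, the finiteness of $\nu$ and $cw$ permits the identification $w + G\ast(cw) = G\ast\nu$ in $\R^3$, i.e.\ $w = G\ast\mu$ where the signed measure $\mu := \nu - cw\,dx$ has \emph{zero total mass} by the previous identity. The resulting cancellation of the leading $1/|x|$-asymptotics in the multipole expansion of $G\ast\mu$, combined with the Strauss pointwise bound $|w(x)| \le C/|x|$ for radial $H^1(\R^3)$-functions and the exponential decay of $c$, can be bootstrapped to force $\mu \equiv 0$ and hence $\nu \equiv 0$. The argument then concludes as in the first paragraph.

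The principal obstacle is this Liouville-type bootstrap: extracting $\nu \equiv 0$ from the identity $\nu(\R^3) = \int cw$ together with the pointwise decay of $w$ requires a careful asymptotic matching between the Newton potentials of $\nu$ and of $cw$, and it is here that both the radial symmetry of $w$ and the exponential decay of $c$ enter in an essential way.
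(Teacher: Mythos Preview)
Your proposal has a genuine gap precisely where you yourself flag the ``principal obstacle'': the bootstrap from the single scalar identity $\nu(\R^3)=\int cw$ (equivalently $\mu(\R^3)=0$) to the conclusion $\mu\equiv 0$ is never carried out, and it is not clear it can be. The vanishing of the zeroth moment of $\mu$ only kills the leading $1/|x|$ term in the Newton potential; it cannot by itself force a signed measure to vanish, and you give no mechanism (higher moments, monotonicity, sign information on $\nu$) that would close the loop. Moreover, you invoke the Strauss radial decay $|w(x)|\le C/|x|$, but the lemma as stated does \emph{not} assume $w$ is radial; the paper's proof works for general $w\in H^1(\R^3)$. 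Finally, note a small slip: $\mu\equiv 0$ would give $\nu=cw\,dx$, not $\nu\equiv 0$; this still yields $-\Delta w=0$ and hence $w=0$, but only after you have actually established $\mu\equiv 0$.

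The paper's argument avoids all of this by a short barrier comparison: assuming $w\not\equiv 0$, the strong maximum principle gives $w>0$, and one checks that $v(x)=|x|^{-\alpha}$ with $1<\alpha\le 3/2$ satisfies $-\Delta v+c(x)v<0$ for $|x|$ large (here the exponential decay of $c$ is what makes $c(x)v$ negligible against $-\Delta v\sim -r^{-\alpha-2}$). A maximum-principle comparison on $\{|x|>r_0\}$, cutting off at radius $R$ and using $w\in L^2$ to kill the boundary term at infinity, yields $w(x)\ge C_0|x|^{-\alpha}$ outside $B_{r_0}$. Since $r^{-\alpha}\notin L^2(\R^3)$ for $\alpha\le 3/2$, this contradicts $w\in L^2$. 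This is a two-line subsolution argument once the right barrier is written down, and it needs neither radial symmetry nor any potential-theoretic representation of $w$.
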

\begin{proof}
We suppose by contradiction that $w \not \equiv 0$, so that by the strong maximum principle $w >0$ in $\R^3$, and we set $v(x):= |x|^{-\alpha} = r^{-\alpha}$ with $1<\alpha \le 3/2$. Then
\[
-\Delta v+c(x) v \le \alpha ( 1-\alpha) r^{-\alpha-2} + C e^{-C r} r^{-\alpha} <0
\]
for every $r>r_0$ with $r_0$ sufficiently large. Since $w>0$ in $\R^3$, there exists $C_0>0$ such that
\[
\min_{\pa B_{r_0}(0)} w = C_0 r_0^{-\alpha}.
\]
Letting $\varphi:= w- C_0 r_0^{-\alpha}$, we claim that
\begin{equation}\label{claim liouv}
\varphi>0 \qquad \text{in $\R^3 \setminus B_{r_0}(0)$}.
\end{equation}
To prove the claim, we observe that
\[
\begin{cases}
-\Delta \varphi +c(x) \varphi \ge 0 & \text{in $\R^3 \setminus B_{r_0}(0)$}\\
\varphi \ge 0 & \text{on $\pa B_{r_0}(0)$}.
\end{cases}
\]
Then, we test the first equation with $\varphi^- \eta_R^2$, where $\{\eta_R\}$ is a family of $\mathcal{C}^\infty_c(\R^3)$ such that $\eta_R \equiv 1$ in $B_R(0)$, $\eta_R \equiv 0$ in $\R^3 \setminus B_{R+1}(0)$, and $|\nabla \eta_R| \le C$ for some positive constant $C$ independent of $R$. We obtain
\begin{multline*}
\int_{B_R(0) \setminus B_{r_0}(0) } |\nabla \varphi^-|^2 + c(x) (\varphi^-)^2 \le \int_{ \R^3 \setminus B_{r_0}(0) } |\nabla (\varphi^- \eta_R)|^2 + c(x) (\varphi^- \eta_R)^2 \\
\le \int_{B_{R+1}(0) \setminus B_{R}(0)} (\varphi^-)^2 |\nabla \eta_R|^2
\le C \int_{B_{R+1}(0) \setminus B_{R}(0)} (\varphi^-)^2,
\end{multline*}
and passing to the limit as $R \to +\infty$ we deduce, thanks to the $L^2$-integrability of $w$, that
\[
0 \le \int_{ \R^3 \setminus B_{r_0}(0) } |\nabla \varphi^-|^2 + c(x) (\varphi^-)^2 \le 0.
\]
Since $c \ge 0$ by assumption, claim \eqref{claim liouv} follows.

At this point it is not difficult to conclude. Indeed, on one side $w \in L^2(\R^3)$, but on the other side
\[
\int_{\R^3 \setminus B_{r_0}(0)} w^2 \ge \int_{\R^3 \setminus B_{r_0}(0)} C_0 |x|^{-2\alpha}\,dx = C\,C_0 \int_{r_0}^{+\infty} r^{2(1-\alpha)}\,dr = +\infty,
\]
since $\alpha \le 3/2$. This gives a contradiction and completes the proof.
\end{proof}

\begin{proof}[Conclusion of the proof of Theorem \ref{thm: beta<0}]
We observed that, if $\lambda_2<0$, then necessarily $u_{2,n} \to \bar u_2$ strongly in $H^1(\R^3)$, and hence the proof is complete. Let us suppose by contradiction that $\lambda_2 \ge 0$. Then by Lemma \ref{lem: Liouville} we deduce that $\bar u_2 \equiv 0$, so that $\bar u_1$ is radial and solves
\begin{equation}\label{eq: scalar norm}
\begin{cases}
-\Delta \bar u_1 -\lambda_1 \bar u_1 = \mu_1 \bar u_1^3 & \text{in $\R^3$} \\
\bar u_1>0 & \text{in $\R^3$} \\
\int_{\R^3} \bar u_1^2 = a_1^2.
\end{cases}
\end{equation}
We infer that $\bar u_1 \in \mathcal{M}_{a_1,\mu_1}$, and moreover,  by the uniqueness of the radial positive solution to \eqref{eq: scalar norm},
\[
I_{\mu_1}(\bar u_1) = \ell(a_1,\mu_1).
\]
Notice also that, as $\bar u_1 \in \mathcal{M}_{a_1,\mu_1}$,
\[
I_{\mu_1}(\bar u_1) = \frac{\mu_1}{8} \int_{\R^3} \bar u_1^4.
\]
In the same way, since $(u_{1,n},u_{2,n}) \in \mathcal{P}$,
\[
J(u_{1,n},u_{2,n}) = \frac{1}{8}  \sum_{i,j} \int_{\R^3} \beta_{ij} u_{i,n}^2 u_{j,n}^2.
\]
Thus, by the strong $L^4$-convergence $(u_{1,n},u_{2,n}) \to (\bar u_1,0)$, the level $c$ of the Palais-Smale sequence $\{(u_{1,n},u_{2,n})\}$ is
\begin{equation}\label{eq37}
\begin{split}
c &= \lim_{n \to \infty} J(u_n,v_n)
   = \lim_{n\to\infty} \frac{1}{8}  \sum_{i,j} \int_{\R^3} \beta_{ij} u_{i,n}^2 u_{j,n}^2\\
 & = \frac{\mu_1}{8} \int_{\R^3} \bar u_1^4 = I_{\mu_1}(\bar u_1) = \ell(a_1,\mu_1),
\end{split}
\end{equation}
in contradiction with the fact that $c> \ell(a_1,\mu_1)$ (see Proposition \ref{prop: existence PS}).
\end{proof}

%

\subsection{Phase-separation}\label{sec: phase sep}

In this subsection we prove Theorem \ref{thm: phase sep}, and we use the subscript $\beta$ to emphasize the dependence of all the considered quantities and functions with respect to $\beta$.

Due to the uniform bound $c_\beta = J_\beta(\bar u_{1,\beta},\bar u_{2,\beta}) \le C$ (see Proposition \ref{prop: existence PS}), the proof follows a well understood scheme. Since $(\bar u_{1,\beta},\bar u_{2,\beta}) \in \mathcal{P}_\beta$ for every $\beta$, we have
\[
J_\beta(\bar u_{1,\beta},\bar u_{2,\beta}) = \frac{1}{6} \int_{\R^N} |\nabla \bar u_{1,\beta}|^2 + |\nabla \bar u_{2,\beta}|^2,
\]
and hence $\{(\bar u_{1,\beta},\bar u_{2,\beta})\}$ is bounded in $H^1(\R^3,\R^2)$. Testing the equation \eqref{system} with $(\bar u_{1,\beta}, \bar u_{2,\beta})$, this implies the boundedness of the sequences $\{\lambda_{1,\beta}\}$ and $\{\lambda_{2,\beta}\}$. Moreover, observing that
\[
-\Delta \bar u_{i,\beta} \le \mu_i \bar u_{i,\beta}^3 \qquad \text{in $\R^3$},
\]
through a Br\'ezis-Kato argument we can check that uniform boundedness in $H^1(\R^3,\R^2)$ implies also uniform boundedness in $L^\infty(\R^3,\R^2)$ (see \cite[page 124]{Tath} for a detailed proof, and \cite{BrKa} for the original argument). At this point, the rest of the proof follows directly by the general theory developed in \cite{NoTaTeVe2, SoTaTeZi, SoZi, TaTe2}.

\section{A natural constraint for scalar equations}\label{sec: natural single}

In this and the next sections we deal with the scalar problem \eqref{single}:
\[
\begin{cases}
-\Delta u -\lambda u = f(u) & \text{in $\R^N$} \\
u >0, u \in H^1(\R^N) \\
\int_{\R^N} u^2 = a^2.
\end{cases}
\]
Solutions $(\lambda,u)$ to \eqref{single} are obtained as critical points of the functional $I$, defined in \eqref{functional single intro}, on $S_a$. Let us recall the definition \eqref{def G single} of $G$ in this context:
\[
\begin{split}
G(u) :&=\int_{\R^N} |\nabla u|^2 - \int_{\R^N}  \left( \frac{N}{2} f(u)u-N F(u) \right) \\
& = \int_{\R^N} |\nabla u|^2 - \frac{N}{2} \int_{\R^N} \tilde F(u).
\end{split}
\]
Then we set $\mathcal{M}:= \{ u \in S_a: G(u) = 0\}$. It is known that, thanks to the Pohozaev identity, any solution to \eqref{single} stays in $\mathcal{M}$ (see \cite[Lemma 2.7]{Jea}).
%
The purpose of this section consists in proving a strong version of Theorem \ref{thm: constraint single intro}.

\begin{theorem}\label{thm: constraint single}
Under ($f1$)-($f3$), the set $\mathcal{M}$ is a $\C^1$ manifold, and moreover:
\begin{itemize}
\item[($i$)] If $\{u_{n}\} \subset \mathcal{C}^\infty_c(\R^3) \cap \mathcal{M}$ is a Palais-Smale sequence for $I$ restricted to $\mathcal{M}$ at a certain level $\ell \in \R$, then $\{u_n\}$ is a Palais-Smale sequence for $I$ restricted to $S_a$.

\item[($ii$)] If there exists a Palais-Smale sequence $\{\tilde u_{n}\}$ for $I$ restricted to $\mathcal{M}$ at level $\ell \in \R$, then there exists a possibly different Palais-Smale sequence $\{u_{n}\} \subset \mathcal{C}^\infty_c(\R^3)$ for $I$ restricted to $\mathcal{M}$ at the same level $\ell \in \R$. Moreover $\|\tilde u_n- \tilde u_{i,n}\|_{H^1} \to 0$ as $n \to \infty$.

\item[($iii$)] If there exists a Palais-Smale sequence $\{\tilde u_{n}\}$ for $I$ restricted to $\mathcal{M}$ at level $\ell \in \R$, then there exists a possibly different Palais-Smale sequence $\{u_{n}\} \subset \mathcal{C}^\infty_c(\R^3)$ for $I$ restricted to $S_a$ at the same level $\ell \in \R$. Moreover $\|\tilde u_n- \tilde u_{i,n}\|_{H^1} \to 0$ as $n \to \infty$.

\item[($iv$)] Let $u$ be a critical point of $I$ restricted on $\mathcal{M}$. Then $u$ is a critical point of $I$ restricted on $S_{a}$, and hence a solution to \eqref{complete problem}.
\end{itemize}
\end{theorem}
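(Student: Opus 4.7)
The plan is to follow the template established for the system in Theorem \ref{thm: natural}, replacing the $3$-dimensional scaling $(s \star w)(x) = e^{3s/2} w(e^s x)$ by its $N$-dimensional analogue $(s \star u)(x) = e^{Ns/2} u(e^s x)$, which again preserves the $L^2$-norm and thus the constraint $S_a$. Throughout, the role played by $G$ and the set $\mathcal{P}$ in Section \ref{sec: natural} is taken over by the functions $G$, $G_a(u) := a^2-\int u^2$ and by $\mathcal{M}$; assumption ($f3$) will be the scalar substitute for the algebraic identities that, in the system case, followed from the homogeneity of the quartic coupling.

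First I would show that $\mathcal{M}$ is a $\mathcal{C}^1$-submanifold of $S_a$ of codimension one, i.e.\ that $d(G,G_a):H^1(\R^N)\to\R^2$ is surjective at every $u\in\mathcal{M}$. If not, there is $\nu\in\R$ with $dG(u)=\nu\,dG_a(u)$, which, after unfolding the definition of $G$ and using that $\tilde F\in\mathcal{C}^1$ by ($f3$), means $-2\Delta u-\tfrac{N}{2}\tilde F'(u) = 2\nu u$. Test this equation against $u$ and compare with the Pohozaev identity of the same equation: one obtains, after eliminating $\nu\int u^2$, a relation between $\int|\nabla u|^2$, $\int \tilde F(u)$ and $\int \tilde F'(u)u$. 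Combined with $G(u)=0$, this forces $\int\!\left[\tilde F'(u)u-(2+\tfrac{4}{N})\tilde F(u)\right]=0$, contradicting ($f3$) unless $u\equiv 0$, which is excluded by $u\in S_a$.

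Next I would analyse the fibre map $\Psi_u(s):=I(s\star u)$. A direct computation using $\mathrm{div}(x)=N$ and the identity $f(t)t=\tilde F(t)+2F(t)$ yields
\begin{equation}
\Psi_u'(s) \;=\; e^{2s}\!\int_{\R^N}|\nabla u|^2 \;-\; \frac{N}{2}\!\int_{\R^N}\tilde F\bigl((s\star u)\bigr),
\end{equation}
so that $\Psi_u'(s)=0$ is exactly equivalent to $s\star u\in\mathcal{M}$, and $\Psi_u'(0)=G(u)$. Differentiating once more and evaluating at a critical point gives
\begin{equation}
\Psi_u''(s) \;=\; \frac{N(N+2)}{2}\!\int_{\R^N}\tilde F\bigl((s\star u)\bigr) \;-\; \frac{N^2}{4}\!\int_{\R^N}\tilde F'\bigl((s\star u)\bigr)(s\star u),
\end{equation}
which is strictly negative by ($f3$). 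Hence every critical point of $\Psi_u$ is a strict local maximum, and uniqueness follows at once (two strict maxima would force a minimum in between). The existence of $s_u$ for nontrivial $u$ is guaranteed by ($f2$): the quadratic term dominates as $s\to-\infty$ and the superquadratic lower bound on $\tilde F$ guaranteed by ($f2$) drives $\Psi_u(s)\to-\infty$ as $s\to+\infty$.

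Finally I would mimic Lemmas \ref{lem: splitting}--\ref{lem: crit P}: for $u\in\mathcal{M}\cap\mathcal{C}^\infty_c(\R^N)$ the variation $v:=\left.\tfrac{d}{ds}\right|_{s=0}(s\star u)=\tfrac{N}{2}u+\nabla u\cdot x$ lies in $H^1$, and a divergence-theorem calculation of $dG(u)[v]$ together with ($f3$) shows $dG(u)[v]\ne 0$, so that
\begin{equation}
T_u S_a \;=\; T_u\mathcal{M}\;\oplus\;\R v.
\end{equation}
Since $s=0$ is critical for $\Psi_u$, we have $dI(u)[v]=\Psi_u'(0)=0$, so the dual norm of $dI(u)$ on $T_uS_a$ equals the dual norm on $T_u\mathcal{M}$. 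This immediately yields ($i$) and ($iv$). For ($ii$) and ($iii$) one first checks that $\mathcal{M}\cap\mathcal{C}^\infty_c(\R^N)$ is dense in $\mathcal{M}$: given $u\in\mathcal{M}$, approximate it by $v_n\in S_a\cap\mathcal{C}^\infty_c$ in $H^1$, use continuity of $u\mapsto s_u$ (which is well defined for all $u\in S_a$ with $u\ne 0$ by the previous paragraph) to get $s_{v_n}\to 0$, and apply Lemma \ref{lem: s strong convergence}; a diagonal argument then produces the required smooth Palais--Smale sequence with $\|u_n-\tilde u_n\|_{H^1}\to 0$.

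The main obstacle I foresee is the manifold step: one has to be careful that, without the cubic homogeneity available in Section \ref{sec: natural}, the contradiction relies delicately on ($f3$) and on correctly combining the linear-dependence equation with its Pohozaev identity. A secondary technical point is the density of $\mathcal{M}\cap\mathcal{C}^\infty_c$ in $\mathcal{M}$, which is needed to justify using the variation $v$ (not in $H^1$ for general $u\in\mathcal{M}$) as a genuine tangent vector.
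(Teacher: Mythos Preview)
Your proposal is correct and follows essentially the same route as the paper: the manifold step via linear dependence plus Pohozaev identity and ($f3$), the fibre map $\Psi_u$ with uniqueness of $s_u$ from strict concavity at critical points, the tangent-space splitting $T_uS_a=T_u\mathcal{M}\oplus\R v$ for smooth $u$, the identity $dI(u)[v]=\Psi_u'(0)=0$, and the density/diagonal argument for ($ii$)--($iii$). The paper delegates the analysis of $\Psi_u$ to \cite{Jea} and records the $\mathcal{C}^1$-dependence of $u\mapsto s_u$ (needed for the density step) as a separate lemma via the implicit function theorem, but otherwise the structure and the key computations---including the formula $dG(u)[v]=\tfrac{N(N+2)}{2}\int\tilde F(u)-\tfrac{N^2}{4}\int\tilde F'(u)u<0$---coincide with yours.
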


The proof of this theorem is divided into several intermediate lemmas.

\begin{lemma}\label{lem: single manifold}
The set $\mathcal{M}$ is a $\C^1$ manifold of codimension $1$ in $S_a$, hence a $\C^1$ manifold of codimension $2$ in $H^1(\R^N)$.
\end{lemma}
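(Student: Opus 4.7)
The plan is to mirror the proof of Lemma 2.1 for the system, which in that setting reduces showing that $\mathcal{P}$ is a manifold to checking surjectivity of the differential of the defining constraints. Here $\mathcal{M}$ is cut out inside $H^1(\R^N)$ by $G_0(u):=a^2-\int u^2=0$ and $G(u)=0$, with $G_0,G$ of class $\C^1$: for $G$ this uses ($f3$) (which gives $\tilde F\in\C^1$) together with the Sobolev-subcritical growth from ($f2$) to guarantee that $u\mapsto\int \tilde F(u)$ is $\C^1$ on $H^1(\R^N)$. It thus suffices to verify that at every $u\in\mathcal{M}$ the map $d(G_0,G)(u)\colon H^1(\R^N)\to\R^2$ is surjective.

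Suppose, toward contradiction, that surjectivity fails at some $u\in\mathcal{M}$. Then $dG(u)$ is a scalar multiple of $dG_0(u)$, so there exists $\lambda\in\R$ with
\[
2\int_{\R^N}\nabla u\cdot\nabla\varphi-\frac{N}{2}\int_{\R^N}\tilde F'(u)\varphi=2\lambda\int_{\R^N} u\varphi \quad \text{for all }\varphi\in H^1(\R^N),
\]
i.e.\ $u$ weakly (and hence classically, by standard bootstrap) solves $-\Delta u-\lambda u=\tfrac{N}{4}\tilde F'(u)$ in $\R^N$. Applying the Pohozaev identity to this equation and using $G(u)=0$ in the form $\int\tilde F(u)=\tfrac{2}{N}\int|\nabla u|^2$, I would obtain
\[
\frac{N-2}{2}\int_{\R^N}|\nabla u|^2=\frac{N\lambda}{2}a^2+\frac{N^2}{4}\int_{\R^N}\tilde F(u)=\frac{N\lambda}{2}a^2+\frac{N}{2}\int_{\R^N}|\nabla u|^2,
\]
which rearranges to the \emph{equality} $\lambda=-\dfrac{2}{Na^2}\int_{\R^N}|\nabla u|^2$.

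On the other hand, testing the equation against $u$ itself gives $\int|\nabla u|^2-\lambda a^2=\tfrac{N}{4}\int\tilde F'(u)u$. Hypothesis ($f3$) yields the pointwise strict inequality $\tilde F'(u)u>(2+\tfrac{4}{N})\tilde F(u)$ wherever $u\neq 0$, so integrating and invoking $G(u)=0$ once more produces
\[
\int_{\R^N}|\nabla u|^2-\lambda a^2 > \frac{N+2}{N}\int_{\R^N}|\nabla u|^2,
\]
i.e.\ the \emph{strict inequality} $\lambda<-\dfrac{2}{Na^2}\int_{\R^N}|\nabla u|^2$. Since $u\in S_a$ with $a>0$ forces $u\not\equiv 0$, and hence $\int|\nabla u|^2>0$ (a nonzero $H^1(\R^N)$ function cannot be constant), the equality from Pohozaev contradicts the strict inequality just obtained. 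This rules out linear dependence of $dG(u)$ and $dG_0(u)$, proving surjectivity and the manifold statement.

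The only non-routine point is justifying the Pohozaev identity for the auxiliary equation $-\Delta u-\lambda u=\tfrac{N}{4}\tilde F'(u)$; this will be the main technical obstacle, but it is standard for Sobolev-subcritical nonlinearities of this type and is, in essence, the scalar analogue of \cite[Lemma~4.6]{BaJeSo} used in Lemma 2.1, so I would cite the corresponding result (cf.\ \cite[Lemma 2.7]{Jea}) rather than redo the regularity/decay arguments from scratch.
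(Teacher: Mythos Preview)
Your proof is correct and follows essentially the same route as the paper's: assume linear dependence of $dG$ and $dG_0$, derive the auxiliary equation $-\Delta u-\lambda u=\tfrac{N}{4}\tilde F'(u)$, and reach a contradiction with ($f3$) via Pohozaev. The only cosmetic difference is in the bookkeeping: the paper invokes \cite[Lemma~2.7]{Jea} in the form $\int|\nabla u|^2=\tfrac{N^2}{8}\int\tilde F'(u)u-\tfrac{N^2}{4}\int\tilde F(u)$ (which already combines Pohozaev with the Nehari relation and eliminates $\lambda$), and then substitutes $G(u)=0$ to obtain directly $(2+\tfrac{4}{N})\int\tilde F(u)=\int\tilde F'(u)u$, contradicting ($f3$); you instead keep the pure Pohozaev identity and the Nehari test separate, solve for $\lambda$ from each, and compare. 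The ingredients and the contradiction are identical. Your parenthetical that $\int|\nabla u|^2>0$ is not actually needed for the contradiction (equality versus strict inequality for $\lambda$ already clashes), but it is harmless.
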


\begin{proof}
As subset of $H^1(\R^N)$, the constraint $\mathcal{M}$ is defined by the two equations $G(u)=0$, $G_1(u)=0$, where
\[
G_1(u):= a^2 - \int_{\R^3} u^2,
\]
and clearly $G$ and $G_1$ are of class $\C^1$. We have to check that
\[
d(G_1, G): H^1(\R^N) \to \R^2 \quad \text{is surjective}.
\]
If this is not true, $dG(u)$ and $dG_1(u)$ are linearly dependent, i.e. there exist $\nu \in \R$ such that
\[
2 \int_{\R^N} \nabla u \cdot \nabla \varphi - \frac{N}{2}\int_{\R^N} \tilde F'(u) \varphi = 2 \nu \int_{\R^N} u \varphi
\]
for every $\varphi \in H^1(\R^N)$. This means that $u$ is a solution to
\[
\begin{cases}
-\Delta u - \nu u = \frac{N}{4} \tilde F'(u) \\
\int_{\R^3} u^2 = a^2
\end{cases}  \text{in $\R^3$},
\]
Thanks to \cite[Lemma 2.7]{Jea}, we infer that
\[
\int_{\R^N} |\nabla u|^2 = \int_{\R^N} \left( \frac{N^2}{8} \tilde F'(u) u - \frac{N^2}{4} \tilde F(u) \right).
\]
Since $u \in \mathcal{M}$, this gives
\[
\frac{N}{2}\int_{\R^N} \tilde F(u) =  \int_{\R^N} \left( \frac{N^2}{8} \tilde F'(u) u - \frac{N^2}{4} \tilde F(u) \right),
\]
in contradiction with ($f3$) and the fact that $u \in S_a$ (and hence $u \not \equiv 0$ in $\R^N$).
\end{proof}

Let us introduce some notation, similar to that of Section \ref{sec: natural}. For $u \in S_a$ and $s \in \R$, we define
\[
(s \star u)(x):= e^{Ns/2} u(e^s x),
\]
so that $\|s \star u\|_{L^2(\R^N)} = \|u\|_{L^2(\R^N)}$. We also consider
\begin{equation}\label{def Psi single}
\Psi_u(s):= I(s \star u) =   \frac{e^{2s}}{2}\int_{\R^N} |\nabla u|^2 - \frac{1}{e^{Ns}} \int_{\R^N} F(e^{Ns/2}u).
\end{equation}
The study of $\Psi_u$ is the object of the next lemma, for which we refer to \cite[Lemma 2.9]{Jea}.

\begin{lemma}\label{lem: structure M}
For any $u \in S_{a}$, a value $s \in \R$ is a critical point of $\Psi_{u}$ if and only if $s \star u \in \mathcal{M}$. Moreover, for any $u \in S_a$ the function $\Psi_u$ has a unique critical point $s_u$, which is a strict maximum.
\end{lemma}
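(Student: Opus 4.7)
My plan is to mirror the one-parameter argument of Lemma \ref{lem: structure P} in the scalar setting, reducing the statement to a calculus problem for $\Psi_u$. The first step is to differentiate
\[
\Psi_u(s) = \frac{e^{2s}}{2}\int_{\R^N} |\nabla u|^2\,dx - e^{-Ns}\int_{\R^N} F\bigl(e^{Ns/2} u(y)\bigr)\,dy
\]
directly in $s$. A short computation using the defining identity $\tilde F(t) = f(t)t - 2F(t)$ gives
\[
\frac{d}{ds}\Bigl[e^{-Ns} F(e^{Ns/2}u(y))\Bigr] = \frac{N}{2}\,e^{-Ns}\,\tilde F(e^{Ns/2}u(y)),
\]
and undoing the change of variables $y = e^s x$ then yields the clean identity
\[
\Psi_u'(s) = \int_{\R^N}|\nabla (s\star u)|^2\,dx - \frac{N}{2}\int_{\R^N}\tilde F(s\star u)\,dx = G(s\star u).
\]
The biconditional in the first part of the lemma follows at once from this.

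For uniqueness and the strict-maximum property I would introduce the auxiliary function $\phi(s) := e^{-2s}\Psi_u'(s)$, which has the same zeros as $\Psi_u'$. Setting
\[
h(s) := \frac{N}{2}e^{-Ns}\int_{\R^N}\tilde F(e^{Ns/2}u)\,dy,
\]
so that $\phi(s) = \int_{\R^N}|\nabla u|^2 - e^{-2s}h(s)$, assumption (f3) applied pointwise as
\[
\tilde F'(e^{Ns/2}u)\,e^{Ns/2}u > \Bigl(2 + \tfrac{4}{N}\Bigr)\tilde F(e^{Ns/2}u) \quad \text{wherever } u(x)\neq 0,
\]
integrates, after computing $h'$ by the product rule, to the key inequality $h'(s) > 2h(s)$ for every $s\in\R$. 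Consequently $\phi'(s) = e^{-2s}(2h(s) - h'(s)) < 0$, so $\phi$ is strictly decreasing and $\Psi_u'$ admits at most one zero $s_u$. At such a zero the same strict inequality gives $\Psi_u''(s_u) = 2h(s_u) - h'(s_u) < 0$, so $s_u$ is a strict (hence unique) maximum.

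It remains to show that $\phi$ actually changes sign. For this I would use (f2) to control the asymptotics of $h$: integrating $\alpha F \le f(s)s \le \beta F$ produces the power bounds $\tilde F(t) \le C|t|^\alpha$ for $|t|\le 1$ and $\tilde F(t) \le C|t|^\beta$ for $|t|\ge 1$, together with the lower bound $\tilde F(t) \ge C'|t|^\alpha$ for $|t|\ge 1$. Since $\alpha, \beta > 2 + 4/N$, the exponents $\frac{N(\alpha-2)}{2}$ and $\frac{N(\beta-2)}{2}$ are both strictly greater than $2$; splitting the integral defining $h(s)$ at $\{|u|\le e^{-Ns/2}\}$ and using that $u\in L^\alpha \cap L^\beta$ by Sobolev embedding (valid since $\beta < 2^*$) then yields $h(s)/e^{2s}\to 0$ as $s\to-\infty$ and $h(s)/e^{2s}\to +\infty$ as $s\to+\infty$. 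Thus $\phi(-\infty) = \int_{\R^N}|\nabla u|^2 > 0$ and $\phi(+\infty) = -\infty$, producing the unique zero by strict monotonicity. The main obstacle in making this rigorous is precisely the $s\to-\infty$ limit, because the pointwise estimate $\tilde F(t) \le C|t|^\alpha$ is only available for small $|t|$ while $u\in H^1(\R^N)$ need not be bounded; handling the two regions $\{|u|\le e^{-Ns/2}\}$ and its complement separately through the Sobolev embedding is the delicate but standard ingredient.
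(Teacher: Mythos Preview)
Your argument is correct. The identity $\Psi_u'(s)=G(s\star u)$ is computed correctly, the use of ($f3$) to obtain $h'(s)>2h(s)$ and hence strict monotonicity of $\phi$ is exactly the right mechanism for uniqueness, and your treatment of the asymptotics via the splitting $\{|u|\lessgtr e^{-Ns/2}\}$ together with $u\in L^\alpha\cap L^\beta$ (from $\alpha,\beta\in(2,2^*)$ and Sobolev) is the standard way to handle the unboundedness issue you flag; both regions contribute terms of order $e^{(N\alpha/2-N-2)s}$ or $e^{(N\beta/2-N-2)s}$, and the exponents are positive precisely because $\alpha,\beta>2+4/N$.

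As for the comparison: the paper does not give its own proof of this lemma at all. It simply refers to \cite[Lemma 2.9]{Jea}, and Remark~\ref{on f3 1} notes that ($f3$) is the hypothesis used there to secure uniqueness of $s_u$. Your write-up is therefore more detailed than what the paper provides, and it reproduces (in spirit, and likely in substance) Jeanjean's original argument: the first claim is the Pohozaev identification $\Psi_u'(s)=G(s\star u)$, existence follows from the endpoint behaviour governed by ($f2$), and uniqueness plus strict concavity at the critical point come from ($f3$). There is nothing to correct.
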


\begin{remark}\label{on f3 1}
We observe that assumption ($f3$) is used in \cite{Jea} to prove the uniqueness of $s_{u}$.
\end{remark}

\begin{lemma}\label{lem: structure M 2}
The map $u \mapsto s_u$ is of class $\C^1$.
\end{lemma}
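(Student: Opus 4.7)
The plan is to apply the implicit function theorem to the equation that characterizes $s_u$. By Lemma~\ref{lem: structure M}, $s_u$ is the unique zero of $\Psi_u'$, and since $\Psi_u'(s)=G(s\star u)$ (this follows from \eqref{def Psi single} by a direct change of variables), we define
\[
\Phi: H^1(\R^N)\times\R\to\R,\qquad \Phi(u,s):=G(s\star u)=e^{2s}\!\int_{\R^N}\!|\nabla u|^2 - \frac{N}{2}e^{-Ns}\!\int_{\R^N}\!\tilde F(e^{Ns/2}u)\,dx.
\]
Then $\Phi(u,s_u)=0$ for every $u\in S_a$, and it suffices to verify the two hypotheses of the implicit function theorem at an arbitrary $(u_0,s_{u_0})$: that $\Phi$ is of class $\C^1$, and that $\partial_s\Phi(u_0,s_{u_0})\neq 0$.

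For the $\C^1$ regularity, one works with the explicit formula above rather than with $(u,s)\mapsto s\star u$ (which need not be $\C^1$ into $H^1$, since differentiating in $s$ produces a term $x\cdot\nabla u$ that is generally not in $L^2$). Using ($f3$) together with the growth controls coming from ($f2$), one checks that $(u,s)\mapsto e^{-Ns}\int\tilde F(e^{Ns/2}u)$ and its analogue for $\tilde F'$ have continuous partial derivatives in both variables; the Sobolev embedding $H^1(\R^N)\hookrightarrow L^p(\R^N)$ for $p\in[2,2^*)$ handles the integrals. This is standard and I would dispatch it quickly.

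The core of the argument is the computation of $\partial_s\Phi(u_0,s_{u_0})=\Psi_{u_0}''(s_{u_0})$ and the verification that it is strictly negative. Setting $v:=s_{u_0}\star u_0\in\mathcal{M}$, the identity $(s_{u_0}+t)\star u_0=t\star v$ gives $\Psi_{u_0}''(s_{u_0})=\Psi_v''(0)$. Differentiating \eqref{def Psi single} twice and using the Pohozaev-type relation $\int|\nabla v|^2=\frac{N}{2}\int\tilde F(v)$ that holds because $v\in\mathcal{M}$, a direct calculation yields
\[
\Psi_v''(0)=\frac{N^2}{4}\int_{\R^N}\!\left[\left(2+\frac{4}{N}\right)\tilde F(v)-\tilde F'(v)v\right]dx,
\]
which is strictly negative by assumption ($f3$) since $v\not\equiv 0$. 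This is the step I expect to be the main technical point, because one has to carry out the differentiation carefully and then invoke the $\mathcal{M}$-relation at precisely the right moment to get an expression to which ($f3$) applies.

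With both hypotheses in hand, the implicit function theorem produces, in a neighbourhood of each $u_0\in S_a$, a unique $\C^1$ function $u\mapsto\tilde s(u)\in\R$ with $\Phi(u,\tilde s(u))=0$. By the uniqueness statement in Lemma~\ref{lem: structure M}, $\tilde s(u)=s_u$ in that neighbourhood, so the globally defined map $u\mapsto s_u$ is $\C^1$ on all of $S_a$.
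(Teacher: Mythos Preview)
Your proposal is correct and follows exactly the same approach as the paper: apply the implicit function theorem to $\Psi_u'(s)=0$, with the nondegeneracy coming from ($f3$). The paper's proof is a two-line sketch that omits the second-derivative computation; your explicit formula $\Psi_v''(0)=\frac{N^2}{4}\int\bigl[(2+\tfrac{4}{N})\tilde F(v)-\tilde F'(v)v\bigr]$ is precisely the quantity that reappears in the paper's Lemma~\ref{lem: splitting single} as $dG(u)\bigl[\left.\frac{d}{ds}\right|_{s=0}(s\star u)\bigr]$, so you have simply spelled out what the paper defers.
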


\begin{proof}
The value $s_u$ can be found solving the equation  $\Psi'_u(s) = 0$, i.e.
\[
\int_{\R^N} e^{2s} |\nabla u|^2 - \frac{N}{2} e^{-Ns} \tilde F(e^{Ns/2} u) = 0.
\]
Thanks to ($f3$), it is not difficult to check that the assumptions of the implicit function theorem are satisfied.
\end{proof}

In the next lemma we obtain a description of $T_u S_a$ for $u \in \mathcal{M}$, similar to the one in Lemma \ref{lem: splitting}.

\begin{lemma}\label{lem: splitting single}
For any $u \in \mathcal{M} \cap \mathcal{C}^\infty_c(\R^N)$, we have
\[
T_{u} S_{a}  = T_u \mathcal{M} \oplus \R \left. \frac{d}{ds}\right|_{s=0}(s \star u).
\]
\end{lemma}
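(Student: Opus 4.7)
The plan is to mimic the proof of Lemma \ref{lem: splitting} in the scalar setting. Since Lemma \ref{lem: single manifold} already gives that $\mathcal{M}$ has codimension $1$ in $S_a$, it suffices to exhibit a single element of $T_u S_a$ that fails to lie in $T_u \mathcal{M}$, namely the infinitesimal dilation
\[
\phi_u := \left.\frac{d}{ds}\right|_{s=0} (s\star u) = \frac{N}{2}\,u + x\cdot\nabla u.
\]
Since $u\in\mathcal{C}^\infty_c(\R^N)$, the function $\phi_u$ is itself smooth and compactly supported, hence lies in $H^1(\R^N)$; and because the curve $s\mapsto s\star u$ is contained in $S_a$ (as $\|s\star u\|_{L^2}=\|u\|_{L^2}$), we have $\phi_u\in T_u S_a$. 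The entire content of the lemma is therefore the non-tangency statement $dG(u)[\phi_u]\neq 0$.

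To establish non-tangency, I would compute $dG(u)[\phi_u]$ directly, using
\[
dG(u)[v] = 2\int_{\R^N} \nabla u\cdot\nabla v - \frac{N}{2}\int_{\R^N}\tilde F'(u)\,v
\]
together with the identities
\[
\nabla(x\cdot\nabla u) = \nabla u + (x\cdot\nabla)\nabla u,\qquad \int \nabla u\cdot(x\cdot\nabla)\nabla u = \tfrac12\int x\cdot\nabla(|\nabla u|^2) = -\tfrac{N}{2}\int|\nabla u|^2,
\]
and $\int \tilde F'(u)\,x\cdot\nabla u = \int x\cdot\nabla\tilde F(u) = -N\int \tilde F(u)$, obtained via the divergence theorem (legitimate because $u$ has compact support). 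These yield
\[
dG(u)[\phi_u] = 2\int_{\R^N}|\nabla u|^2 - \frac{N^2}{4}\int_{\R^N}\tilde F'(u)u + \frac{N^2}{2}\int_{\R^N}\tilde F(u).
\]
Using $u\in\mathcal{M}$ to replace $2\int|\nabla u|^2$ by $N\int\tilde F(u)$ and collecting terms gives
\[
dG(u)[\phi_u] = -\frac{N^2}{4}\int_{\R^N}\left[\tilde F'(u)u - \left(2+\tfrac{4}{N}\right)\tilde F(u)\right].
\]
By assumption ($f3$) the bracketed integrand is strictly positive wherever $u\neq 0$, and $u\not\equiv 0$ since $u\in S_a$; hence $dG(u)[\phi_u]<0$, in particular $\phi_u\notin T_u\mathcal{M}$.

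An alternative, more conceptual route is to observe that $\Psi_u'(s)=G(s\star u)$ (a short change of variables shows both sides coincide), so that $dG(u)[\phi_u]=\Psi_u''(0)$; then invoke Lemma \ref{lem: structure M}, which asserts that $s=0$ is the unique and strict maximum of $\Psi_u$ when $u\in\mathcal{M}$. The main subtlety is that strict maximality alone only gives $\Psi_u''(0)\le 0$, so at some point one still has to use ($f3$) to upgrade to strict inequality — this is precisely the role played by ($f3$) in the computation above, and is the only nontrivial obstacle in the proof.
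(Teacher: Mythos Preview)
Your proof is correct and follows essentially the same route as the paper's: both reduce the claim to showing $dG(u)[\phi_u]\neq 0$, compute this quantity via the divergence theorem to obtain $2\int|\nabla u|^2 - \tfrac{N^2}{4}\int\tilde F'(u)u + \tfrac{N^2}{2}\int\tilde F(u)$, and then use the constraint $G(u)=0$ together with ($f3$) to conclude strict negativity. Your additional remark linking $dG(u)[\phi_u]$ to $\Psi_u''(0)$ is a nice conceptual gloss not present in the paper, and your caveat that strict maximality alone would only yield $\le 0$ is exactly right.
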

\begin{proof}
The proof is similar to that of Lemma \ref{lem: splitting}, and hence is sketched. We have to show that
\[
\left. \frac{d}{ds}\right|_{s=0}(s \star u) \in T_u S_a \setminus T_u \mathcal{M}.
\]
%
%
For any $u \in \mathcal{C}^\infty_c(\R^N)$
\[
 \left. \frac{d}{ds}\right|_{s=0}(s \star u)(x) =\frac{N}{2} u(x) + \nabla u(x) \cdot x \in \mathcal{C}^\infty_c(\R^3),
 \]
 and hence it is easy to check that $
 \left. \frac{d}{ds}(s \star u) \right|_{s=0}  \in T_u S_a$. Using the divergence theorem as in Lemma \ref{lem: splitting}, we also obtain
\[
\begin{split}
dG(u) & \left[\left. \frac{d}{ds}\right|_{s=0}(s \star u)\right] = 2 \int_{\R^N} \left[\frac{N}{2} |\nabla u|^2
 + \nabla u \cdot \nabla (\nabla u \cdot x) \right] \\
 & -\frac{N}{2} \int_{\R^N} \tilde F'(u) \left( \frac{N}{2} u + \nabla u \cdot x\right) \\
 & = 2  \int_{\R^N}  |\nabla u|^2 - \frac{N^2}{4} \int_{\R^N} \tilde F'(u) u  + \frac{N^2}{2} \int_{\R^N} \tilde F(u).
\end{split}
\]
Since $u \in \mathcal{M}$, this implies that
\[
\begin{split}
dG(u) & \left[\left. \frac{d}{ds}\right|_{s=0}(s \star u)\right] \\
& = \left( N + \frac{N^2}{2} \right) \int_{\R^N} \tilde F(u) - \frac{N^2}{4} \int_{\R^N} \tilde F'(u) u < 0,
\end{split}
\]
where we used assumptions ($f2$), ($f3$), and the fact that $u \not \equiv 0$.
\end{proof}

\begin{lemma}\label{lem: crit P single}
If $u \in \mathcal{C}^\infty_c(\R^N) \cap \mathcal{M}$, then
\[
dI(u) \left[\left. \frac{d}{ds}\right|_{s=0}(s \star u)\right] = 0.
\]
\end{lemma}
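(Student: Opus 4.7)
The plan is to mirror exactly the argument used for Lemma \ref{lem: crit P} in the system case, which is essentially a one-line consequence of the variational characterization of $\mathcal{M}$ via the fiber maps $\Psi_u$. The only technical point I would flag up-front is that, for $u \in \mathcal{C}^\infty_c(\R^N)$, the variation
\[
\left. \frac{d}{ds}\right|_{s=0}(s \star u)(x) = \frac{N}{2} u(x) + \nabla u(x) \cdot x
\]
is compactly supported and smooth, hence lies in $H^1(\R^N)$, so the pairing $dI(u)[\cdot]$ is legitimately defined on it. This is why the hypothesis $u \in \mathcal{C}^\infty_c(\R^N)$ is kept in the statement.

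With that in place, the argument is: since $u \in \mathcal{M}$, Lemma \ref{lem: structure M} gives that $s = 0$ is the (unique, strict maximum) critical point of $\Psi_u$, i.e.\ $\Psi_u'(0) = 0$. But by definition $\Psi_u(s) = I(s \star u)$, and the curve $s \mapsto s \star u$ is $\mathcal{C}^1$ into $H^1(\R^N)$ on the smooth compactly supported representative (as already observed in the proof of Lemma \ref{lem: splitting single}). Applying the chain rule for Gateaux derivatives yields
\[
0 = \Psi_u'(0) = \left. \frac{d}{ds}\right|_{s=0} I(s \star u) = dI(u)\!\left[\left. \frac{d}{ds}\right|_{s=0}(s \star u)\right],
\]
which is exactly the claim.

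I do not expect any real obstacle here; the work has already been done in Lemmas \ref{lem: structure M} and \ref{lem: splitting single}. The only thing worth double-checking in a final write-up is the legitimacy of interchanging $d/ds$ with the functional $I$, but this follows from the explicit expression \eqref{def Psi single} (which is $\mathcal{C}^1$ in $s$ by $(f2)$) together with the fact that $s \mapsto s \star u$ is $\mathcal{C}^1$ into $H^1(\R^N)$ when $u \in \mathcal{C}^\infty_c(\R^N)$.
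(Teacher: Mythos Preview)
Your proposal is correct and follows exactly the approach indicated in the paper, which simply refers back to Lemma~\ref{lem: crit P}: since $u \in \mathcal{M}$ forces $s_u = 0$ by Lemma~\ref{lem: structure M}, one has $\Psi_u'(0)=0$, and the chain rule gives the claim. Your additional remarks on why the variation lies in $H^1(\R^N)$ and on the legitimacy of the chain rule are welcome clarifications but do not alter the argument.
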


The proof is an easy consequence of the definition of $\mathcal{M}$, see Lemma \ref{lem: crit P} for more details.

\begin{proof}[Conclusion of the proof of Theorem \ref{thm: constraint single}]
We only prove point ($i$). Let $\{u_n\} \subset \mathcal{C}^\infty_c(\R^N) \cap \mathcal{P}$ be a Palais-Smale sequence for $I|_{\mathcal{M}}$. We denote by $(T_{u}S_a)^*$ the dual space to $T_{u}S_a$, and by $\|\cdot\|$ the $H^1(\R^N)$ norm. In view of Lemma \ref{lem: splitting single}, we have:
\begin{align*}
\| dI (u_n) & \|_{(T_{u}S_a)^*}  = \sup\left\{|dI(u_n)[\varphi]| : \varphi \in T_{u}S_a, \  \|\mf{\varphi}\| \le 1  \right\} \\
& = \sup\left\{ |dI(u_n)[\phi] + dI(u_n)[\psi] |   \left| \begin{array}{l} \varphi = \phi +\psi, \|\varphi\| \le 1 \\
\phi \in T_{u}\mathcal{M},  \ \mf{\psi} \in \R\left(\left. \frac{d}{ds}\right|_{s=0} (s \star u_n)\right) \end{array}\right.\right\}.
\end{align*}
Now $dI(u_n)[\psi]=0$ by Lemma \ref{lem: crit P single}, and hence
\begin{align*}
\| dI (u_n) & \|_{(T_u(S_a))^*}  = \sup\left\{|dI(u_n)[\phi]| : \phi \in T_{u}\mathcal{P}, \
\|\phi\| \le 1 \right\} \\
& = \| dI (u_n)  \|_{(T_{u}\mathcal{M})^*} \to 0
\end{align*}
as $n \to \infty$, since $\{u_n\}$ is a Palais-Smale sequence for $I$ restricted to $\mathcal{M}$.
\end{proof}

\section{Existence and multiplicity of solutions to \eqref{single}}\label{sec: existence single}

This section is devoted to the proof of Theorem \ref{thm: main single}. We are interested in the existence or radial solutions, and hence throughout this section we will always work in $S_a^r$. This simplifies some compactness issues. As a consequence of Theorem \ref{thm: constraint single}, the existence of solutions to \eqref{single} reduces to the existence of critical points for $I$ restricted to $\mathcal{M}$. The main advantage is that, in contrast to $I$ restricted on $S_a$, the functional $I$ restricted to $\mathcal{M}$ satisfies the Palais-Smale condition, and is bounded from below. Thus, the Lusternik-Schnirelman theory yields infinitely many critical points. This idea is rigorously developed in what follows.

We denote by $\cat_{\Z/2}(\mathcal{M})$ the equivariant Lusternik-Schnirelman category of $\mathcal{M}$ with respect to the antipodal action of $\Z/2$, and by $\gen(\mathcal{M})$ the Krasnoselskii genus of $\mathcal{M}$. For the definitions and the properties of $\cat_{\Z/2}$ and $\gen$ we refer to \cite[Section 2]{Babook} (there it is considered a much more general setting with respect to the one considered here; an easier reference for the genus is \cite{AmbMal}).

Notice that both $I$, $G$ and $G_1$ are even functionals, and hence the problem is invariant under the action of $\Z/2$.

\begin{lemma}\label{lem: LS cat}
It results that $\cat_{\Z/2}(\mathcal{M}) = +\infty$.
\end{lemma}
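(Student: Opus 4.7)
The plan is to bound $\cat_{\Z/2}$ from below by the Krasnoselskii genus via the standard inequality $\cat_{\Z/2}(\mathcal{M}) \ge \gen(\mathcal{M})$, and then to show $\gen(\mathcal{M}) = +\infty$. Since $\mathcal{M}$ is symmetric under $u \mapsto -u$ (both $G$ and $G_1$ are even) and does not contain $0$, the genus is well defined. To show that $\gen(\mathcal{M}) \ge k$ for every $k \ge 1$, I would exhibit, for each $k$, a continuous odd map $\tau_k \from S^{k-1} \to \mathcal{M}$; the monotonicity of $\gen$ under odd maps then yields $\gen(\tau_k(S^{k-1})) \ge k$.

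To construct $\tau_k$, I would first pick $k$ radial functions $\phi_1,\dots,\phi_k \in \C^\infty_c(\R^N)$ with mutually disjoint supports and $\|\phi_i\|_{L^2}=1$ (e.g.\ cutoffs supported in disjoint spherical annuli around the origin). For $t=(t_1,\dots,t_k)\in S^{k-1}\subset\R^k$ define
\[
\sigma_k(t) := a\sum_{i=1}^k t_i\phi_i.
\]
The disjoint-support condition gives $\|\sigma_k(t)\|_{L^2}^2 = a^2\sum_i t_i^2 = a^2$, so $\sigma_k(t) \in S_a^r$. Clearly $\sigma_k$ is continuous and odd. Now define
\[
\tau_k(t) := s_{\sigma_k(t)} \star \sigma_k(t),
\]
where $s_u$ is the unique maximum point provided by Lemma \ref{lem: structure M}. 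By construction $\tau_k(t) \in \mathcal{M}$, and continuity of $\tau_k$ follows from Lemma \ref{lem: structure M 2} together with Lemma \ref{lem: s strong convergence}.

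It remains to check oddness of $\tau_k$. Since $I$ is even, $\Psi_{-u}(s) = I(s\star(-u)) = I(-(s\star u)) = I(s\star u) = \Psi_u(s)$, so by uniqueness of the critical point $s_{-u} = s_u$. Consequently
\[
\tau_k(-t) = s_{-\sigma_k(t)} \star (-\sigma_k(t)) = -\bigl(s_{\sigma_k(t)} \star \sigma_k(t)\bigr) = -\tau_k(t),
\]
and $\tau_k \from S^{k-1} \to \mathcal{M}$ is a continuous odd map as desired. Thus $\gen(\mathcal{M}) \ge \gen(\tau_k(S^{k-1})) \ge k$ for every $k$, giving $\gen(\mathcal{M})=+\infty$ and hence $\cat_{\Z/2}(\mathcal{M}) = +\infty$.

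No serious obstacle is anticipated: the only technical point is making sure that $s_{\sigma_k(t)}$ depends continuously on $t$, which is guaranteed by the $\C^1$-dependence of $u \mapsto s_u$ established in Lemma \ref{lem: structure M 2}. Everything else is the classical genus-construction argument.
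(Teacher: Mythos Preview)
Your proposal is correct and follows essentially the same route as the paper: reduce to the genus via $\cat_{\Z/2}(\mathcal{M}) \ge \gen(\mathcal{M})$, then for each $k$ build an odd continuous map from a $(k-1)$-sphere in $S_a^r$ into $\mathcal{M}$ by $u \mapsto s_u \star u$, using Lemma~\ref{lem: structure M 2} and Lemma~\ref{lem: s strong convergence} for continuity and the evenness of $u \mapsto s_u$ for oddness. The only cosmetic difference is that you construct the $k$-dimensional subspace explicitly via disjoint-support radial functions, whereas the paper simply takes an arbitrary $n$-dimensional subspace $V$ and works with $SV = V \cap S_a^r$.
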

\begin{proof}
It is well known that $\cat_{\Z/2}(\mathcal{M}) \ge \gen(\mathcal{M})$, see for instance \cite[Proposition 2.10]{Babook}. Therefore, we can prove that $\gen(\mathcal{M})= +\infty$. Let $V \subset H^1(\R^N)$ with $\dim V = n$, and let $SV:= V \cap S_a^r$. Notice that $\gen(SV) = \dim V = n$ (this follows for instance by \cite[Theorem 10.5]{AmbMal}). We show that there exists a map $\psi: SV \to \mathcal{M}$ continuous and odd, whence by \cite[Lemma 10.4]{AmbMal} we deduce that $\gen(\mathcal{M}) \ge \gen(SV) =n$; since $n$ is arbitrary, the thesis follows.

The explicit expression of $I(s \star u)$ (see \eqref{def Psi single}) and the oddness of $f$ ensure that the map $SV \ni u \mapsto s_u \in \R$ is even: $s_u = s_{-u}$. It is also continuous by Lemma \ref{lem: structure M 2}. The map $\psi(u) = s_u \star u$ is then odd because
\[
\psi(-u) = s_{-u} \star (-u) = - s_u \star u = - \psi(u),
\]
and it is also continuous due to Lemma \ref{lem: s strong convergence}.
\end{proof}

Now we describe the properties of $I$ on $\mathcal{M}$. We shall use many times the following inequalities, which can be easily proved using assumptions ($f1$) and ($f2$): for every $t \in \R$ and $s \ge 0$ there holds
\begin{equation}\label{f1 e f2}
\begin{cases}
s^\beta F(t) \le F(ts) \le s^\alpha F(t) & \text{if $s \le 1$} \\
s^\alpha F(t) \le F(ts) \le s^\beta F(t) & \text{if $s \ge 1$}.
\end{cases}
\end{equation}
We also recall the Gagliardo-Nirenberg inequality: There exists a constant $S$ depending on $N$ and on $r \in (2,2^*)$ such that
\begin{equation}\label{Gag-Nir}
\|u\|_{L^r} \le S \|u\|_{L^2}^{1-\gamma} \|\nabla u\|_{L^2}^{\gamma} \qquad \text{for all }u \in H^1(\R^N);
\end{equation}
here $\gamma= N\left( \frac12 - \frac{1}{r}\right)$.

\begin{lemma}\label{lem: non-degeneracy}
There exists $\delta>0$ such that $\|u\|_{\D^{1,2}} \ge \delta$ for every $u \in \mathcal{M}$.
\end{lemma}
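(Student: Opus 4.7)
The plan is to use $u \in \mathcal{M}$ to write $\|\nabla u\|_{L^2}^2 = \frac{N}{2}\int \tilde F(u)$, then bound the right-hand side by a superquadratic function of $\|\nabla u\|_{L^2}$ via Gagliardo--Nirenberg, and conclude that $\|\nabla u\|_{L^2}$ cannot be too small.

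More concretely, first I would use assumptions $(f1)$ and $(f2)$ to get a pointwise control on $\tilde F$. From $(f2)$ one has $\tilde F(s) = f(s)s - 2F(s) \le (\beta - 2) F(s)$, and combining the inequalities displayed in \eqref{f1 e f2} (applied with $t=1$) yields
\[
F(s) \le F(1)\bigl(|s|^\alpha + |s|^\beta\bigr) \quad \text{for all } s \in \R,
\]
and hence $\tilde F(s) \le C\bigl(|s|^\alpha + |s|^\beta\bigr)$ for a constant $C>0$ depending only on $f$.

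Next I would insert this into $G(u) = 0$ and use Gagliardo--Nirenberg \eqref{Gag-Nir}. For $r \in \{\alpha,\beta\}$, the exponent $r\gamma_r = N(r/2 - 1)$ satisfies $r\gamma_r > 2$ because $r \ge \alpha > 2 + 4/N$. Since $\|u\|_{L^2} = a$ on $\mathcal{M}$, we obtain
\[
\|\nabla u\|_{L^2}^2 = \frac{N}{2}\int_{\R^N} \tilde F(u) \le C'\bigl(\|u\|_{L^\alpha}^\alpha + \|u\|_{L^\beta}^\beta\bigr) \le C''\bigl(\|\nabla u\|_{L^2}^{\alpha\gamma_\alpha} + \|\nabla u\|_{L^2}^{\beta\gamma_\beta}\bigr),
\]
where $C''$ depends on $a$, $N$, $\alpha$, $\beta$, and the Gagliardo--Nirenberg constant.

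Setting $t := \|\nabla u\|_{L^2}^2$ and dividing by $t$, the inequality becomes
\[
1 \le C''\bigl(t^{(\alpha\gamma_\alpha - 2)/2} + t^{(\beta\gamma_\beta - 2)/2}\bigr).
\]
Since both exponents $(\alpha\gamma_\alpha - 2)/2$ and $(\beta\gamma_\beta - 2)/2$ are strictly positive, this forces $t \ge \delta^2$ for some $\delta>0$ independent of $u \in \mathcal{M}$, giving the claim. The only mildly delicate point is checking that $r\gamma_r > 2$ for every $r \in [\alpha,\beta]$, which reduces to the strict inequality $\alpha > 2 + 4/N$ built into $(f2)$; the rest of the argument is a routine application of Gagliardo--Nirenberg and the growth bounds on $F$.
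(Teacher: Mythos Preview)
Your proposal is correct and follows essentially the same approach as the paper: both arguments use the constraint $G(u)=0$ together with the growth bound $F(s)\le F(1)(|s|^\alpha+|s|^\beta)$ from \eqref{f1 e f2} and then Gagliardo--Nirenberg to reach an inequality of the form $\|\nabla u\|_{L^2}^2 \le C\bigl(\|\nabla u\|_{L^2}^{N(\alpha-2)/2}+\|\nabla u\|_{L^2}^{N(\beta-2)/2}\bigr)$, concluding from $N(\alpha-2)/2>2$. The only cosmetic difference is that the paper passes through $\int|\nabla u|^2\le \frac{N}{2}\int f(u)u\le \frac{N\beta}{2}\int F(u)$, whereas you use $\tilde F(s)\le(\beta-2)F(s)$ directly; the resulting estimates are the same up to a harmless constant.
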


\begin{proof}
Since $F(s) \ge 0$ for every $s \in \R$ and by ($f2$), for $u \in \mathcal{M}$ we have
\begin{align*}
\int_{\R^3} |\nabla u|^2 &\le \frac{N}{2}\int_{\R^3} f(u)u \le  \frac{N\beta}{2} \int_{\R^3} F(u) \\
& \le  \frac{N\beta}{2}\int_{\{|u| \le 1\}} F(u) + \frac{N\beta}{2}\int_{\{|u| \ge 1\}} F(u) \\
& \le \frac{N\beta}{2} \int_{\{|u| \le 1\}} F(1) |u|^\alpha  +  \frac{N\beta}{2}\int_{\{|u| \ge 1\}} F(1)|u|^\beta \\
& \le C \int_{\R^3} |u|^\alpha + |u|^\beta,
\end{align*}
where we used \eqref{f1 e f2}. To estimate the right hand side, we apply \eqref{Gag-Nir} with $r=\alpha$ and $r=\beta$, obtaining
\[
\|\nabla u\|_{L^2}^2 \le C \|\nabla u\|_{L^2}^{\frac{N}{2}(\alpha-2)} + C \|\nabla u\|_{L^2}^{\frac{N}{2}(\beta-2)}.
\]
Now due to ($f2$) we know that both $N(\alpha-2)/2$ and $N(\beta-2)/2$ are strictly larger than $2$, and hence the lemma follows.
\end{proof}

\begin{lemma}\label{lem: bdd below}
The functional $I$ restricted to $\mathcal{M}$ is coercive and bounded from below by a positive constant.
\end{lemma}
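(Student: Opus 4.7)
The plan is to express $I(u)$ for $u\in\cM$ directly in terms of $\int_{\R^N}\tilde F(u)$, using the defining relation $G(u)=0$, and then to exploit $(f2)$ to compare $F$ with $\tilde F$. Concretely, for $u\in\cM$ we have
\[
\int_{\R^N}|\nabla u|^2=\frac{N}{2}\int_{\R^N}\tilde F(u),
\]
so
\[
I(u)=\frac{1}{2}\int_{\R^N}|\nabla u|^2-\int_{\R^N}F(u)=\frac{N}{4}\int_{\R^N}\tilde F(u)-\int_{\R^N}F(u).
\]
Next I would use $(f2)$ in the form $f(s)s\ge\alpha F(s)$, which yields $\tilde F(s)=f(s)s-2F(s)\ge(\alpha-2)F(s)$, hence $\int F(u)\le\frac{1}{\alpha-2}\int\tilde F(u)$. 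Substituting gives
\[
I(u)\ge\left(\frac{N}{4}-\frac{1}{\alpha-2}\right)\int_{\R^N}\tilde F(u)=\frac{N(\alpha-2)-4}{4(\alpha-2)}\int_{\R^N}\tilde F(u).
\]
The constant in front is strictly positive since $\alpha>2+4/N$; call it $C_1>0$.

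Coercivity is then immediate: again by $G(u)=0$,
\[
I(u)\ge C_1\int_{\R^N}\tilde F(u)=\frac{2C_1}{N}\int_{\R^N}|\nabla u|^2,
\]
and because $u\in S_a$ the full $H^1$-norm satisfies $\|u\|_{H^1}^2=a^2+\|\nabla u\|_{L^2}^2$, so $I(u)\to\infty$ whenever $\|u\|_{H^1}\to\infty$ along $\cM$. The strictly positive lower bound follows by combining the previous estimate with Lemma~\ref{lem: non-degeneracy}, which gives $\|\nabla u\|_{L^2}\ge\delta$ for every $u\in\cM$; thus
\[
I(u)\ge\frac{2C_1}{N}\delta^2>0 \qquad\text{for all }u\in\cM.
\]

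There is essentially no obstacle: the argument is a one-line manipulation once $G(u)=0$ is used to eliminate $\|\nabla u\|_{L^2}^2$ in favour of $\int\tilde F(u)$, and $(f2)$ does the rest. The only point requiring some care is to verify that the constant $\frac{N}{4}-\frac{1}{\alpha-2}$ is indeed positive, which is exactly the $L^2$-supercritical threshold $\alpha>2+4/N$ built into $(f2)$; note also that assumption $(f3)$ is not needed here.
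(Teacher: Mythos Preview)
Your proof is correct and follows essentially the same approach as the paper's: both use the constraint $G(u)=0$ to rewrite $I(u)$, then apply $(f2)$ to obtain $I(u)\ge C\int_{\R^N}|\nabla u|^2$, and finally invoke Lemma~\ref{lem: non-degeneracy}. The only minor difference is organizational: you work directly with $\tilde F$ and use only the lower bound $f(s)s\ge\alpha F(s)$, whereas the paper writes $I(u)=\frac{N}{4}\int\big(f(u)u-(2+4/N)F(u)\big)$ and then passes through the additional estimate $\int|\nabla u|^2\le\frac{N\beta}{2}\int F(u)$, which also uses the upper bound $f(s)s\le\beta F(s)$; your route is thus slightly cleaner and yields a marginally sharper constant.
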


\begin{proof}
By ($f2$), we infer that for any $u \in \mathcal{M}$
\begin{equation}\label{2004}
\int_{\R^3} |\nabla u|^2 \le \frac{N}{2}\int_{\R^N} f(u) u \le \frac{N\beta}{2} \int_{\R^N} F(u).
\end{equation}
Therefore, using again ($f2$)
\begin{align*}
I(u) & =  \frac{N}{4} \int_{\R^N} f(u) u- \left(\frac{N+2}{2}\right) \int_{\R^N} F(u)
 = \frac{N}{4}  \int_{\R^N} \left( f(u) u - \left(2+\frac{4}{N}\right) F(u) \right) \\
& \ge \frac{N}{4}   \left( \alpha-2-\frac{4}{N}\right) \int_{\R^N} F(u)
 \ge \frac{1}{2\beta}  \left( \alpha-2-\frac{4}{N}\right) \int_{\R^N} |\nabla u|^2
\end{align*}
for any $u \in \mathcal{M}$. Now Lemma~\ref{lem: bdd below} follows from Lemma \ref{lem: non-degeneracy}.
\end{proof}

\begin{lemma}\label{PS on M}
The Palais-Smale condition is satisfied by $I$ constrained to $\mathcal{M}$.
\end{lemma}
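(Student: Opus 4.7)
Let $\{u_n\}\subset \cM\cap S_a^r$ be a Palais-Smale sequence for $I|_\cM$ at level $\ell$. The plan is to upgrade $\{u_n\}$ to a Palais-Smale sequence for $I|_{S_a}$, extract the associated Lagrange multipliers $\lambda_n$, show they converge to some $\lambda<0$ bounded away from $0$, and then use the sign of $\lambda$ together with the radial compact embedding $H^1_{\rm rad}(\R^N)\hookrightarrow L^p(\R^N)$, $p\in(2,2^*)$, to deduce strong convergence in $H^1$.

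First I will invoke Lemma~\ref{lem: bdd below} to conclude that $\{u_n\}$ is bounded in $H^1(\R^N)$. By Theorem~\ref{thm: constraint single}(iii) applied in the radial class (possible thanks to the symmetric criticality principle), up to replacing $u_n$ by an $H^1$-close smooth compactly supported sequence, I may assume $\{u_n\}$ is itself a Palais-Smale sequence for $I|_{S_a^r}$ at level $\ell$. Hence there exist $\lambda_n\in\R$ such that
\[
-\Delta u_n-\lambda_n u_n-f(u_n)\to 0\quad\text{in }H^{-1}(\R^N).
\]
Testing with $u_n$ and using boundedness in $H^1$, I get
\[
\lambda_n a^2=\int_{\R^N}|\nabla u_n|^2-\int_{\R^N}f(u_n)u_n+o(1),
\]
so $\{\lambda_n\}$ is bounded; up to subsequence $\lambda_n\to\lambda\in\R$, and by the compactness of $H^1_{\rm rad}(\R^N)\hookrightarrow L^p(\R^N)$ for $2<p<2^*$ (note $\alpha,\beta\in(2,2^*)$ by ($f2$)), $u_n\wc u$ in $H^1$, $u_n\to u$ in $L^\alpha\cap L^\beta$, and a.e.\ in $\R^N$.

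The key step (and the main obstacle) is to show $\lambda<0$, bounded away from $0$. Using that $u_n\in\cM$ gives $\int|\nabla u_n|^2=\frac{N}{2}\int(f(u_n)u_n-2F(u_n))$, substitution into the displayed formula for $\lambda_n a^2$ yields
\[
\lambda_n a^2=\frac{N-2}{2}\int_{\R^N}f(u_n)u_n-N\int_{\R^N}F(u_n)+o(1)\le\left(\frac{(N-2)\beta-2N}{2}\right)\int_{\R^N}F(u_n)+o(1),
\]
where I used $f(s)s\le\beta F(s)$ from ($f2$). The coefficient is strictly negative because $\beta<2^*$. On the other hand, combining ($f2$) with the Pohozaev identity on $\cM$ and Lemma~\ref{lem: non-degeneracy} gives $\int F(u_n)\ge \frac{2}{N(\beta-2)}\|\nabla u_n\|_2^2\ge \frac{2\delta^2}{N(\beta-2)}$, hence there exists $c_0>0$ with $\lambda_n\le -c_0$ for all large $n$, so $\lambda\le -c_0<0$.

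Finally, I test the approximate equation with $u_n-u$:
\[
\int_{\R^N}\nabla u_n\cdot\nabla(u_n-u)-\lambda_n\int_{\R^N}u_n(u_n-u)=\int_{\R^N}f(u_n)(u_n-u)+o(1).
\]
Using $|f(s)|\le C(|s|^{\alpha-1}+|s|^{\beta-1})$ and H\"older together with the strong $L^\alpha$, $L^\beta$ convergence of $u_n$, the right-hand side tends to $0$. For the left-hand side, weak $H^1$ convergence gives
\[
\int_{\R^N}\nabla u_n\cdot\nabla(u_n-u)=\|\nabla(u_n-u)\|_{L^2}^2+o(1),\qquad \int_{\R^N}u_n(u_n-u)=\|u_n-u\|_{L^2}^2+o(1).
\]
Since $-\lambda_n\ge c_0>0$ for large $n$, the left-hand side dominates $\min(1,c_0)\|u_n-u\|_{H^1}^2+o(1)$, and I conclude $u_n\to u$ in $H^1(\R^N)$. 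The closure of $\cM$ in $H^1$ then gives $u\in\cM$, completing the proof of the Palais-Smale condition.
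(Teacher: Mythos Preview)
Your proposal is correct and follows essentially the same route as the paper: invoke Theorem~\ref{thm: constraint single} to upgrade the Palais--Smale sequence on $\cM$ to one on $S_a$, extract Lagrange multipliers $\lambda_n$, use the Pohozaev constraint together with ($f2$) and Lemma~\ref{lem: non-degeneracy} to force $\lambda_n\le -c_0<0$, and then conclude strong $H^1$ convergence. The paper simply defers this last step to \cite[Section 2.4]{Jea}, while you spell it out via the test function $u_n-u$ and the compact radial embedding; the arguments are otherwise identical.

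One small point worth tightening: when you invoke Theorem~\ref{thm: constraint single}(iii) to replace $\{u_n\}$ by a smooth compactly supported sequence, make explicit that the new sequence still lies in $\cM$ (this is the content of part (ii), through which (iii) is proved), since you subsequently use $u_n\in\cM$ in the substitution step.
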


\begin{proof}
Let $\{\tilde u_n\} \subset \mathcal{M}$ be a Palais-Smale sequence for $I|_{\mathcal{M}}$ at some level $c \in \R$ (notice that automatically $c>0$ by Lemma \ref{lem: bdd below}), and let $\{u_n\} \subset \mathcal{M} \cap \C^\infty_c(\R^N)$ be the possibly different Palais-Smale sequence given by Theorem \ref{thm: constraint single}-($ii$). It is sufficient to show that $\{u_n\}$ converges strongly in $H^1(\R^N)$ to some limit, up to a subsequence.

By Lemma \ref{lem: bdd below} $\{u_n\}$ is bounded, and hence up to a subsequence $u_n \wc u$ weakly in $H^1(\R^3)$, for a suitable $u \in H^1(\R^N)$. Moreover, due to Theorem \ref{thm: constraint single} and the Lagrange multipliers rule (see also \cite[Lemma 2.5]{Jea} for more details), we have
\[
\int_{\R^N} (\nabla u_n \cdot \nabla \varphi - f(u_n) \varphi -\lambda_n u_n \varphi ) = o(1) \|\varphi\|_{H^1}
\]
for every $\varphi \in H^1(\R^N)$, where $o(1) \to 0$ as $n \to \infty$ and $\lambda_n \in \R$. Taking $\varphi=u_n$ and recalling the definition of $\mathcal{M}$, we deduce that
\begin{align*}
\lambda_n a^2 & = \int_{\R^N} (|\nabla u_n|^2 - f(u_n) u_n) + o(1)\\
& \le \int_{\R^N} \left( \left(\frac{N-2}{2} \right) f(u_n) u_n - N F(u_n) \right) + o(1).
\end{align*}
Let $N \ge 3$; using assumption ($f2$), estimate \eqref{2004} and Lemma \ref{lem: non-degeneracy}, the previous computation gives
\begin{align*}
\lambda_n a^2  &\le \int_{\R^N} \left( \frac{N-2}{2N} \right) \left(\beta- \frac{2N}{N-2}\right) F(u_n) + o(1) \\
& \le C \int_{\R^N} |\nabla u_n|^2 \le -C <0.
\end{align*}
If $N=2$, the same conclusion follows using simply estimate \eqref{2004} and Lemma \ref{lem: non-degeneracy}.

Notice also that $\{\lambda_n\}$ is bounded (since $\{u_n\}$ is), and hence up to a subsequence $\lambda_n \to \lambda <0$.

The conclusion of the proof follows from now on exactly as in \cite[Section 2.4]{Jea}.
\end{proof}

\begin{proof}[Proof of Theorem \ref{thm: main single}]
Due to Lemmas \ref{lem: bdd below} and \ref{PS on M}, we can apply the Lusternik-Schnirelman Theorem 2.19 in \cite{Babook}; this, together with Lemma \ref{lem: LS cat}, completes the proof of existence and multiplicity. We also observe that the minimizer for $I|_{\mathcal{M}}$ can be taken positive, because $u\in\mathcal{M}$ implies $|u|\in\mathcal{M}$ and $I(u)=I(|u|)$.
\end{proof}

\begin{remark}
Theorem 2.19 in \cite{Babook} is stated for $\C^1$ functionals on $\C^{2-}$ manifolds, while under our assumption $\mathcal{M}$ is merely $\C^1$. This is not a problem, as observed in \cite[page 21]{Babook}, since the Szulkin's approach developed in \cite{SZ} permits to replace the $\C^{2-}$ assumption in \cite{Babook} with simple $\C^1$ regularity.
\end{remark}


\end{document}